\numberwithin{equation}{section}
\newtheorem{thm}{Theorem}[section]
\newtheorem{lem}[thm]{Lemma}
\newtheorem{defin}{Definition}[section]
\newtheorem{rem}{Remark}[section]
\newcommand\cE{{\mathcal E}}
\newcommand\cF{{\mathcal F}}
\newcommand\cL{{\mathcal L}}
\newcommand\cO{{\mathcal O}}
\newcommand\cM{{\mathcal M}}
\newcommand\cN{{\mathcal N}}
\newcommand{\dd}{\text{d}}
\newcommand{\x}{\vec{x}}
\newcommand{\y}{\vec{y}}
\newcommand\Ban{{\mathbb{B}}}
\newcommand\ve{\varepsilon}
\newcommand{\wh}[1]{\widehat{#1}}
\newcommand{\wt}[1]{\widetilde{#1}}
\newcommand{\ol}[1]{\overline{#1}}
\def\dist{{\rm dist}}
\def\supp{{\rm supp}}
\def\Prob{{\mathbb{P}}}
\def\EXP{{\mathbb{E}}}
\def\complex{\mathbb{C}}
\def\naturals{\mathbb{N}}
\def\Tor{\mathbb{T}}
\def\reals{\mathbb{R}}
\def\integers{\mathbb{Z}}
\def\ba{\mathbf{a}}
\def\bg{\mathbf{g}}
\def\bh{\mathbf{h}}
\def\bmu{\boldsymbol{\mu}}
\def\cF{\mathcal{F}}
\def\cE{\mathcal{E}}
\def\cL{\mathcal{L}}
\def\cM{\mathcal{M}}
\def\cN{\mathcal{N}}
\def\cO{\mathcal{O}}
\def\cQ{\mathcal{Q}}
\def\fF{\mathfrak{F}}
\def\fN{\mathfrak{N}}
\def\fn{\mathfrak{n}}
\def\tP{{\tilde P}}
\def\beq{\begin{equation}}
\def\eeq{\end{equation}}
\begin{document}

\title[Asymptotics for Large Deviations]{Higher order Asymptotics for  Large deviations -- Part I}
\author{Kasun Fernando and Pratima Hebbar}
\address{Pratima Hebbar\\
Department of Mathematics\\
University of Maryland \\
4176 Campus Drive\\
College Park, MD 20742-4015, United States.}
\email{{\tt phebbar@math.umd.edu}}

\address{Kasun Fernando\\
Department of Mathematics\\
University of Maryland \\
4176 Campus Drive\\
College Park, MD 20742-4015, United States.}
\email{{\tt abkf@math.umd.edu}}
%\date{ {\bf File: {\jobname}.tex.}}% eliminate in final version
\begin{abstract}
For sequences of non-lattice \textit{weakly dependent} random variables, we obtain asymptotic expansions for Large Deviation Principles. These expansions, commonly referred to as strong large deviation results, are in the spirit of Edgeworth Expansions for the Central Limit Theorem. We apply our results to show that Diophantine iid sequences, finite state Markov chains, strongly ergodic Markov chains and Birkhoff sums of smooth expanding maps \& subshifts of finite type satisfy these strong large deviation results. 
\end{abstract}
%\keywords{...}
%\subjclass{...}
\maketitle
\vspace{-20pt}
\section{Introduction}

If $\{X_n\}_{n\geq 1}$ is a sequence of independent identically distributed (iid) centred random variables with exponential moments, then Cram\'er's Large Deviation Principle (LDP) states that if $S_N=X_1+X_2+\dots+X_N$, then for all $a>0$,
\begin{align*}
\lim_{N \to \infty} \frac{1}{N}\log \Prob\big(S_N \geq Na\big) = -I(a)
\end{align*}
where $I(z)=\sup_{\theta \in \reals} \big[z\theta - \log \EXP(e^{\theta X_1})\big].$ This implies that tail probabilities of sums of iid random variables decay exponentially fast i.e.\hspace{3pt}$\Prob\big(S_N \geq Na\big) \approx e^{-I(a)N}$ for large $N$.

%In the non-iid setting, equivalent results exist for classes of random variables that are \textit{weakly dependent}. 
The following LDP (see \cite[Chapter V.6]{HH}) provides the log large deviation asymptotics for more general sequences of random variables. 

\begin{thm}[G\"artner--Ellis]\label{LocalGEThm}
Let $X_n$ be a sequence of random variables. Suppose there exists $\delta>0$ such that for $\theta \in (0,\delta)$, 
\begin{equation}\label{AsympLogMoment}
\lim_{N \to \infty} \frac{1}{N} \log \EXP(e^{\theta S_N}) = \Omega(\theta),
\end{equation}
where $\Omega$ is strictly convex continuously differentiable function with $\Omega'(0)=0$. Then, for all $a \in \left(0,\frac{\Omega(\delta)}{\delta}\right)$, there exists $\theta_a \in (0,\delta)$ such that
\begin{equation}\label{ExpRate}
\lim_{N \to \infty}\frac{1}{N} \log \Prob(S_N - N\bar{X}\geq Na)=-I(a),
\end{equation}
where $I(a)=\sup_{\theta\in (0,\delta)}[a\theta-\Omega(\theta)]=a\theta_a-\Omega(\theta_a)$ and $\bar{X}=\lim_{N \to \infty} \frac{\EXP(S_N)}{N}$.
\vspace{-8pt}
\end{thm}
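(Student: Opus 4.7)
The plan is to prove matching upper and lower bounds on $\tfrac{1}{N}\log \Prob(S_N - N\bar{X} \geq Na)$, both equal to $-I(a) = -(a\theta_a - \Omega(\theta_a))$, via the classical Cram\'er exponential tilting method, adapted to the G\"artner--Ellis setting where independence is absent. As a preliminary simplification, pointwise convergence of the convex functions $\theta \mapsto \tfrac{1}{N}\log\EXP(e^{\theta S_N})$ to $\Omega$ forces convergence of derivatives at points where $\Omega$ is differentiable, so $\bar{X} = \Omega'(0) = 0$ and it suffices to estimate $\tfrac{1}{N}\log \Prob(S_N \geq Na)$. Strict convexity of $\Omega$ and the condition $a \in (0,\Omega(\delta)/\delta)$ guarantee a unique maximizer $\theta_a \in (0,\delta)$ satisfying $\Omega'(\theta_a) = a$ and $I(a) = a\theta_a - \Omega(\theta_a)$.

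For the \textbf{upper bound} I would apply the exponential Chebyshev inequality at $\theta = \theta_a$,
\begin{equation*}
\Prob(S_N \geq Na) \leq e^{-\theta_a Na}\, \EXP(e^{\theta_a S_N}),
\end{equation*}
take $\tfrac{1}{N}\log$ on both sides, and invoke \eqref{AsympLogMoment} to obtain $\limsup_{N} \tfrac{1}{N}\log \Prob(S_N \geq Na) \leq -\theta_a a + \Omega(\theta_a) = -I(a)$.

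For the \textbf{lower bound}, fix small $\eta,\eps > 0$ with $\theta' := \theta_a + \eta \in (0,\delta)$ and $\Omega'(\theta') < a + \eps$, and introduce the tilted law $\tProb_N$ defined by $d\tProb_N / d\Prob = e^{\theta' S_N}/\EXP(e^{\theta' S_N})$. Then
\begin{equation*}
\Prob(S_N \geq Na) \;\geq\; \EXP(e^{\theta' S_N})\, e^{-\theta' N(a+\eps)}\, \tProb_N\!\bigl(Na \leq S_N \leq N(a+\eps)\bigr).
\end{equation*}
Applying $\tfrac{1}{N}\log$, passing $N \to \infty$, and then sending $\eta,\eps \to 0^+$, the lower bound reduces to showing that the tilted probability satisfies $\tfrac{1}{N}\log \tProb_N\!\bigl(Na \leq S_N \leq N(a+\eps)\bigr) \to 0$.

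The main obstacle is establishing this concentration of $S_N/N$ under $\tProb_N$ in the absence of any independence or Markov structure. The key observation is that the tilted cumulant
\begin{equation*}
\tfrac{1}{N}\log \tEXP_N(e^{\xi S_N}) \;=\; \tfrac{1}{N}\log \EXP\bigl(e^{(\theta'+\xi)S_N}\bigr) - \tfrac{1}{N}\log \EXP\bigl(e^{\theta' S_N}\bigr)
\end{equation*}
converges on the neighbourhood $\xi \in (-\theta',\delta - \theta')$ of $0$ to $\tilde\Omega(\xi) := \Omega(\theta'+\xi) - \Omega(\theta')$, whose derivative at $\xi = 0$ equals $\Omega'(\theta') \in (a, a+\eps)$. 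The already-proven upper bound, applied to $\tProb_N(S_N \geq N(a+\eps))$ with positive $\xi$ and to $\tProb_N(S_N \leq Na)$ with negative $\xi$ (the interval $(-\theta', \delta-\theta')$ contains both sides of $0$, so the Chebyshev argument works symmetrically about the new mean $\Omega'(\theta')$), shows that both tails of $S_N/N - a$ under $\tProb_N$ decay exponentially; the complementary event $\{Na \leq S_N \leq N(a+\eps)\}$ therefore has probability tending to $1$, completing the proof.
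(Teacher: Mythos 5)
The paper does not prove Theorem~\ref{LocalGEThm}; it quotes it from \cite[Chapter V.6]{HH}, so there is no internal proof to compare your attempt against. Your argument is nevertheless the standard Cram\'er tilting proof of the G\"artner--Ellis bounds, and its core mechanism is sound: the exponential Chebyshev inequality at $\theta_a$ gives the upper bound, and the change of measure $d\tProb_N/d\Prob \propto e^{\theta' S_N}$ reduces the lower bound to concentration of $S_N/N$ near $\Omega'(\theta')$ under $\tProb_N$, which you obtain by applying the same Chebyshev bound to the tilted log-moment generating function $\tilde\Omega(\xi)=\Omega(\theta'+\xi)-\Omega(\theta')$. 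The sign computation is correct: because $\tilde\Omega'(0)=\Omega'(\theta')$ lies strictly between $a$ and $a+\eps$, the exponents $-\xi(a+\eps)+\tilde\Omega(\xi)$ (for small $\xi>0$) and $-\xi a+\tilde\Omega(\xi)$ (for small $\xi<0$) are both strictly negative, so both tails of $S_N$ vanish exponentially under $\tProb_N$ and the window $\{Na\le S_N\le N(a+\eps)\}$ carries probability tending to $1$.

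The one step that deserves a caveat is the opening simplification $\bar X=\Omega'(0)=0$. Convergence of convex functions forces convergence of derivatives at an \emph{interior} point of the domain of convergence; here \eqref{AsympLogMoment} is assumed only on $(0,\delta)$, so $\theta=0$ is a boundary point. From a one-sided hypothesis one can only deduce $\limsup_N \EXP(S_N)/N \le \Omega'(0^+)=0$, not equality, since the lower bound on $f_N'(0^+)$ would require information for $\theta<0$. In effect $\bar X=0$ is an implicit hypothesis of the theorem as stated (and indeed the rate function $I(a)=\sup[a\theta-\Omega(\theta)]$ only matches the tail of $S_N-N\bar X$ when $\bar X=0$); it is automatic in all of the paper's applications, where the moment generating functions converge on a full complex disc about the origin, but it does not follow from the one-sided assumption \eqref{AsympLogMoment} alone. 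This is a looseness in the theorem's phrasing rather than a defect in your proof, but it is worth flagging since you present it as a deduced fact rather than an assumption.
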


It is natural to ask if the tail probabilities, $\Prob\big(S_N \geq Na\big) \approx e^{-I(a)N}$, could be made more precise. The standard approach to address this would be to look at the pre-exponential factor as well as the asymptotic expansions of the distribution function in the domain of large deviations.

\begin{defin}[Strong Asymptotic Expansions for LDP]\label{LDPExp}
Suppose $S_N$ satisfies the LDP with rate function $I$. Then, $S_N$ admits strong asymptotic expansion of order $r$ for LDP in the range $(0,L)$ if there are functions $D_k:(0,L)\to \reals$ for $0 \leq k < \frac{r}{2}$ such that for each $a \in (0,L)$,
$$\Prob(S_N-N\bar{X}\geq aN)e^{I(a)N} = \sum_{k=0}^{\lfloor r/2 \rfloor} \frac{D_k(a)}{N^{k+1/2}}+ C_{r,a}\cdot o\left(\frac{1}{N^{\frac{r+1}{2}}}\right).$$
\end{defin}

This idea of expressing the errors in limit theorems as asymptotic expansions goes back to Chebyshev in \cite{Ch}. In the setting of the Central Limit Theorem (CLT) these expansions, called the Edgeworth expansions, were first discussed rigorously in \cite{Cr}, later in \cite{ES, GK, NG1, NG2, Feller2, IL, BR, GH}, and more recently in \cite{HH, HP, KM, DF, FL}. Such expansions in the Local Limit Theorem (LLT) for iid lattice valued random variables are discussed in \cite{ES, IL}. In \cite{FL, PN}, the same expansions are considered for weakly dependent lattice random variables. 

In the absence of strong asymptotic expansions, weak expansions can be used to describe the asymptotics of large deviations. They are in the spirit of weak Edgeworth expansions in \cite{Br, FL}. 

\begin{defin}[Weak Asymptotic Expansions for LDP]\label{LDPWeakExp}
Suppose $S_N$ satisfies an LDP with rate function $I$. Let $(\cF, \|\cdot \|)$ be a normed space of functions defined on $\reals$.  Then $S_N$ admits weak asymptotic expansion of order $r$ for large deviations in the range $(0,L)$ for $f \in \cF$ if there are functions $D^f_{k}:(0,L)\to \reals$ $($depending on $f)$ for $0 \leq k < \frac{r}{2}$ such that for each $a \in (0,L)$,
$$\EXP(f(S_N-(\bar{X}+a)N))e^{I(a)N} = \sum_{k=0}^{\lfloor r/2 \rfloor} \frac{D^f_k(a)}{N^{k+1/2}}+ C_{r,a}\|f\|\cdot o\left(\frac{1}{N^{\frac{r+1}{2}}}\right).$$
\end{defin}

The asymptotic expansions for LDP have wide range of applications. One such example is the problem of obtaining uniform asymptotics for the solutions of second order parabolic equations with periodic coefficients. In the $1-$dimensional case these asymptotics have been obtained in \cite{TT}.  In the higher dimensions, the main term of the asymptotic expansion valid up to the domain of large deviations was established by the second author and her coauthors in \cite{HKN}. Such results are the key to studying the phenomena of intermittency in branching diffusion processes. 

In statisitics, inference models can be improved using these precise large deviation results. See \cite{BaR, CS, Jo} and references therein. Moreover, one can use these expansions to describe tails of invariant measures of stochastic regression models. \cite{GL, Kes} discuss similar examples. In dynamical systems, the LDP for Birkohff sums is closely related to the problem of finding rates of escape from neighbourhoods of invariant sets. This is described in \cite{Y}. Finding exact large deviations gives a better idea of the capacity of invariant sets as a barrier to transport.

Our focus here is to establish natural conditions (in the context of dynamical systems \& Markov processes) that guarantee the existence of strong and weak asymptotic expansions for LDP, and to verify them for a wide range of examples. Therefore, we do not pursue the applications mentioned before. Some of them will be a subject of a future work.

The first rigorous treatment of exact large deviation asymptotics for sums of iid random variables was done by Cram\'er in \cite{Cr} assuming the existence of an absolutely continuous component in the distribution of $X_1$. In \cite{BaR}, the strong asymptotic expansions of all orders are obtained when $X_1$ satisfies the $0-$Diophantine condition, $\lim_{|s| \to \infty} |\EXP(e^{isX_1}) |<1,$
or when $X_1$ is lattice valued. \cite{CS, Jo} describe the pre--exponential factor in large deviation asymptotics in the non--iid settng under a decay condition on the Fourier--Laplace transform of $S_N$ but do not discuss the higher order corrections. For geometrically ergodic Markov chains, these exact conndtioned are verified in \cite{KM}.

There is a substantial body of work related to the large deviation asymptotics of densities (whose existence we do not assume). See for example, \cite{IL, Pe, SS, DS}. In addition, limit theorems and their higher order asymptotics have been studied for random matrix products. For example, LLN, CLT and LDP for random matrix products can be found in \cite{LP} and \cite[Chapter 5]{BL}. In \cite{BM}, the pre-exponential factor in the LDP is obtained. It should be noted that the general criterion that we have developed here does apply in the setting studied in \cite{BM}. In fact, their results follow from \Cref{FirstTerm}. For more recent work on random matrix products see \cite{BQ, GLP, Pham}. 

Even though there is an extensive literature devoted to asymptotic expansions for the LDP, the conditions provided in them for the existence of expansions are far from being optimal. In contrast, the results presented here are sharp. In the classical setting of iid random variables, the conditions required in our paper are much less restrictive than in any of the previous work. Also, we discuss several examples in which the asymptotic expansions up to a finite order $r$ exist while expansions of order $r+1$ do not. 

We obtain weak expansions in several cases where strong expansions do not exist. In fact, we believe that our work is the first where the weak expansions are used in the context of large deviations. This is significant because the availability of weak expansions will be crucial in several applications mentioned before. Further, we obtain asymptotic expansions in several examples which were inaccessible by previous methods.

The abstract conditions guaranteeing the asymptotic expansions and the main results of the paper are presented in \Cref{Non-IID LDP}. Continuous time analogues of these results will be discussed in a sequel to this paper. The conditions we state are an extension of the Nagaev--Guivarc\textsc{\char13}h criterion, which is often used to establish the CLT for Markov processes and dynamical systems (see \cite{BL, G} for details). The idea behind the Nagaev--Guivarc\textsc{\char13}h approach is to first code the characteristic function of $S_N$ using iterations of an operator -- a Markov operator or a Perron-Forbenius transfer operator -- and then use the spectral properties of this operator to obtain results about $S_N$.

We present the proofs of our results in \Cref{Proofs}. There are two key ideas behind these proofs:  the Cram\'er's transform, which \textit{exponentially tilts} the distribution function of $S_N$ and the weak Edgeworth expansions for weakly dependent random variables found in \cite{FL}. 

In \Cref{iidCramer}, we consider the iid setting and recover the results in \cite{BaR} for non-lattice random variables. In \Cref{l-DiophVar}, we provide an af{f}irmative answer to a question raised in \cite{BaR} about the existence of strong asymptotic expansions for LDPs for iid sequences that are neither $0-$Diophantine nor lattice--valued. In \Cref{FiniteMarkov}, we show that for finite state Markov chains, weak expansions of all orders exist even when strong expansions of sufficiently high order (depending on the number of states) fail to exist. We discuss Markov chains with $C^1$--densities in \Cref{MarkovDensity}. We also discuss strong large deviation results for ergodic averages of smooth expanding maps and subshifts of finite type. These are obtained in \Cref{SFTs} and \Cref{ExpandingMaps} as a Corollary of \Cref{FirstTerm}. 

The coef{f}icients of these asymtptotic expansions are related to the asymptotic moments of the exponentially tilted $S_N$, and hence to exponential moments of $S_N$. This relationship is explicit because the coefficients are written as integrals of polynomials with coefficients depending on the exponential moments of $S_N$. The derivation of polynomials follows a standard argument due to Cram\'er, and in the non--iid setting these polynomials are described in \cite[Section 4]{FL} in detail. In fact, a precise description of the coefficients in both weak and strong asymptotic expansions along with an inductive algorithm to compute them are provided there. 

Throughout the paper, we assume that $N$ is large enough without explicitly mentioning that we do so, and make no attempt to find optimal constants in the error terms. However, we keep track of how the errors depend on the function in the weak expansion. The letter $C$ is often used to denote constants and may refer to dif{f}erent constants, even in the same sentence. The subscripts present in these constants, like $r$ and $a$ in $C_{r,a}$, describe how the constants depend on parameters. 

\section{Main Results}\label{Non-IID LDP}
Suppose that there exist a Banach space $\Ban$, a family of bounded linear operators $\cL_z:\Ban\to\Ban$, and vectors $v\in \Ban, \ell \in \Ban'$ (the space of bounded linear functionals on $\Ban$)  such that
\begin{equation}
\label{MainAssum}
\EXP\left(e^{z S_N}\right)=\ell(\cL^N_z v),
\end{equation}
for $z\in \complex$ for which the following conditions $[B]$ and $[C]$ are satisf\/ied: \vspace{5pt} \\ 
\underline{\textbf{Condition $[B]$}}: There exists $\delta > 0$ such that %\vspace{-8pt}
\begin{itemize}\setlength\itemsep{4pt}
	\item[(B1)] $z \mapsto \cL_z$ is continuous on the strip $|$Re$(z)|<\delta$ and holomorphic on the disc $|z| < \delta$.
	\item[(B2)] For each $\theta \in (-\delta, \delta)$, the operator $\cL_\theta$ has an isolated and simple eigenvalue  $\lambda(\theta) > 0$ and the rest of its spectrum is contained\ inside the disk of radius smaller than $\lambda(\theta)$ (spectral gap). In addition, $\lambda(0) = 1$.
	\item[(B3)] For each $\theta \in (-\delta, \delta)$, for all real numbers $s\neq 0$, 
	the spectrum of the operator $\cL_{\theta+is}$, denoted by sp$(\cL_{\theta+is})$, satis{f}ies: $
	\text{sp}(\cL_{\theta+is})\subseteq \{z\in \complex\ |\ |z|<\lambda(\theta)\}.$
	\item[(B4)] For each $\theta \in (-\delta, \delta)$, there exist positive numbers $r_1, r_2, K$, and $N_0$ such that $$\left\Vert \cL_{\theta+is}^N \right\Vert \leq \frac{\lambda(\theta)^N}{N^{r_2}}$$
	 for all $N>N_0$, for all $K \leq |s| \leq N^{r_1} $.%\vspace{-11pt}
	 \end{itemize}
\begin{rem}
In the case of ergodic sums of dynamical systems, $\cL_0$ is the Ruelle-Perron-Forbenius transfer operator. Also, the relation \eqref{MainAssum} takes the form $\EXP_{\mu}(e^{zS_N})=\mu(\cL^N_z \bf{1})$ where $\cL_z$ is a twisted transfer operator, $\mu$ is the initial distribution and $\bf{1}$ is the constant function $1$. In the case of Markov chains, $\cL_0$ is the corresponding Markov operator and $\EXP_{\mu}(e^{zS_N})=\mu(\cL_z^N \bf{1})$ where $\cL_z$ is a Fourier kernel associated to $\cL_0$ and $\mu$, $\bf{1}$ are as before. 
\end{rem}
\begin{rem}\label{B1-B4}
Suppose $(B4)$ holds. Let $N_1>N_0$ be such that $N^{(r_1-\epsilon)/r_1}_1>N_0$. Then, writing $N_2=N-\lceil N^{(r_1-\epsilon)/r_1} \rceil \lceil N^{\epsilon/r_1}_1 \rceil$, for all $N\gg N_1$, we have that $N_2 > N_0$ and
\begin{align*}
\phantom{aaaaaaaaaa}\frac{\|\cL^N_{\theta + is}\|}{\lambda(\theta)^N}&\leq \frac{\|(\cL^{\lceil N^{(r_1-\epsilon)/r_1} \rceil }_{\theta + is})^{ \lceil N^{\epsilon/r_1}_1 \rceil }\|}{\lambda(\theta)^{\lceil N^{(r_1-\epsilon)/r_1} \rceil \lceil N^{\epsilon/r_1}_1 \rceil }} \frac{\|\cL^{N_2}_{\theta + is}\|}{\lambda(\theta)^{N_2}}\\ &\leq \frac{\|(\cL^{\lceil N^{(r_1-\epsilon)/r_1} \rceil }_{\theta + is})\|^{\lceil N^{\epsilon/r_1}_1 \rceil }}{\lambda(\theta)^{\lceil N^{(r_1-\epsilon)/r_1} \rceil \lceil N^{\epsilon/r_1}_1 \rceil}}\leq \frac{1}{\lceil N^{(r_1-\epsilon)/r_1}  \rceil^{r_2 \lceil N^{\epsilon/r_1}_1 \rceil }},\ \ K \leq |s| \leq N^{r_1-\epsilon}.
\end{align*}
Therefore,
\begin{align*}
\|\cL^N_{\theta + is}\|\leq \frac{\lambda(\theta)^N}{N^{r_2C_{N_1}}},
\end{align*}
where $C_{N_1}=\frac{r_1-\epsilon}{r_1}N_1^{\epsilon/r_1}$. 
Note that by f{i}xing $N_1$ large enough, we can make $r_2C_{N_1}$ as large as we want. Hence, given $(B4)$, by reducing $r_1$ by an arbitrarily small quantity and choosing $N_0$ suf{f}iciently large, we may assume $r_2$ is suf{f}iciently large.
\end{rem}	 
	 
As a consequence of (B2), the operator $\cL_\theta$, $\theta \in (-\delta, \delta)$, takes the form 
\begin{equation}\label{EigenDeco}
\cL_{\theta} = \lambda(\theta)\Pi_{\theta}+ \Lambda_{\theta},
\end{equation}
where $\Pi_{\theta}$ is the eigenprojection corresponding to the top eigenvalue $\lambda(\theta)$ and  $\Pi_{\theta}\Lambda_{\theta} =\Lambda_{\theta}\Pi_{\theta} =0$.  Due to (B1), we can use perturbation theory of bounded linear operators (see \cite[Chapter 7]{Kato}) to conclude that $\theta \mapsto \lambda(\theta)$, $\theta \mapsto \Pi_{\theta}$ and $\theta \mapsto \Lambda_{\theta}$ are analytic.
\vspace{10pt}

\noindent
\underline{\textbf{Condition $[C]$}}:
For all $\theta \in (-\delta, \delta)$, $(\log \lambda)^{\prime\prime}(\theta)>0$  and  $\ell(\Pi_{\theta}v) >0$.
\vspace{5pt}
\begin{rem}\label{LDPHolds}\
\begin{enumerate}[leftmargin=1.2em]
\item[$1.$] Without loss of generality, we assume that $\bar{X}=0$ i.e.\hspace{3pt}$\{X_n\}_{n \geq 1}$ is centred to simplify the notation. One can easily reformulate the results for non--centered $\{X_n\}_{n \geq 1}$ using the corresponding results for $\{X_n- \bar{X}\}_{n \geq 1}$.
\item[$2.$] Fix $\theta \in (-\delta, \delta)$. Due to \eqref{MainAssum} and \eqref{EigenDeco} we have that\begin{align*}
\phantom{aaa}\EXP\left(e^{\theta S_N}\right) = \ell (\cL^N_\theta v) = \lambda(\theta)^N\ell\big(\Pi_{\theta}v\big) + \ell\big(\Lambda^N_{\theta}v \big)= \lambda(\theta)^N\Big[\ell\big(\Pi_{\theta}v\big) +\lambda(\theta)^{-N} \ell\big(\Lambda_{\theta}^N v \big)\Big].
\end{align*}	 	
Due to $(B2)$ and \eqref{EigenDeco}, the spectral radius of $\Lambda_\theta$ is less than $\lambda(\theta)$. So, $\lim\limits_{N \to \infty}\lambda(\theta)^{-N} \ell\big(\Lambda_{\theta}^N v \big)$ $=0$. From the condition $[C]$, $\ell\big(\Pi_{\theta}v\big)>0$.  Thus, for large enough $N,$
\[
0 < c_1 < \Big[\ell\big(\Pi_{\theta}v\big) +\lambda(\theta)^{-N} \ell\big(\Lambda_{\theta}^N v \big)\Big]< c_2
\]
for some $c_1, c_2$. Therefore 
	\[
	\lim_{N \to \infty} \frac{1}{N} \log 	\EXP\left(e^{\theta S_N}\right) = \log \lambda(\theta). 
	\]
Also, note that $\log \lambda(\theta)$ is analytic and strictly convex because $\lambda(\theta)>0$, $\lambda(\cdot)$ is analytic and $(\log \lambda)^{\prime\prime}(\theta)>0$. Also, $(\log \lambda)^\prime (0) = \frac{\lambda^\prime(0)}{\lambda(0)}=\lim_{N \to \infty} \frac{\EXP(S_N)}{N}=0$ $($see \cite[Section 4]{FL}$)$.  
%	In fact, from the strict convexity of $\log(\lambda(\theta))$ and the fact that $\log\lambda(0) = 0$, it follows that  $\log \lambda(\theta) >0 $ for all $\theta \in (-\delta, \delta), \theta\neq 0$.
Now$,$ applying \Cref{LocalGEThm}, we conclude that $S_N$ satis{f}ies the LDP in \eqref{ExpRate} with $I(z)=\sup_{\theta \in (0,\delta)}[z\theta - \log \lambda(\theta)]$.
\item[$3.$] From the above calculations it is clear that $\log \lambda(\theta) > \log(\lambda (0))=0$ for $\theta \in (0, \delta)$, and hence, $\lambda(\theta)>1$  for $\theta \in (0,\delta) $.
\item[$4.$] If $\delta=\infty$, then $B:=\lim_{\delta \to \infty}\frac{\log \lambda(\delta)}{\delta} \in (0,\infty]$ exists and the LDP holds for all $a \in (0,B)$. This is because the function $f(x)$ defined as  $f(x) = \frac{\log \lambda(x)}{x}$ is strictly increasing on $(0,\delta)$. 
In fact, the function $f$ is differentiable on $(0,\delta)$ and 
\[
f'(x) = \frac{x(\log \lambda)'(x) - (\log \lambda)(x)}{x^2}.
\]
Now, $(\log \lambda)'(x) > \frac{\log \lambda(x)}{x}$ for all $x \in (0,\infty)$ since $\log \lambda(x)$ is strictly convex. Thus, $f'(x)> 0$ for all $x \in (0,\delta)$. 
\end{enumerate}	
\end{rem}
In order to state our main results, we introduce the function space $\fF_k^m$ given by$$\fF_k^m = \{f \in C^m(\reals) |\ C^m_k(f) < \infty\},$$ where  $C^m_k(f) = \max_{0\leq j \leq m} \|f^{(j)}\|_{\text{L}^1}+  \max_{0\leq j \leq k} \|x^jf\|_{\text{L}^1}$. We call a function $f$ (left) exponential of order $\alpha$, if $\lim_{x\to -\infty} |e^{-\alpha x} f(x)| = 0$.
Define the function space $\fF^m_{k,\alpha}$ by 
$$\fF^m_{k,\alpha} = \{f \in \fF_k^m|\ f^{(m)}\text{ is exponential of order}\ \alpha\}.$$
%the collection of all $f \in \fF^{k}_m$ with $f^{(k)}$ is exponential of order $\alpha$. 
It is clear that $\fF^m_{k,\alpha} \subset \fF^m_{k, \beta}$ if $ \alpha>\beta$.  Finally, define, $\fF^m_{k,\infty} = \bigcap_{\alpha>0} \fF^m_{k, \alpha}$. 
%This intersection is non-empty. For example, the family of Gaussian functions and $C^\infty_c(\reals)$ are in $\fF^k_{m,\alpha}$ for all $\alpha>0$.
%We further assume that the sequence $\{X_n\}_{n \geq 1}$ is weakly dependent as described below. 

The following two theorems give higher order asymptotics for the LDP in \Cref{LocalGEThm} in the weak and the strong sense, respectively.	
\begin{thm}\label{WeakExp} Let $r \in \naturals$. Suppose that conditions $[B]$ and $[C]$ hold. Then, for all $a\in \Big(0, \frac{\log{\lambda(\delta)}}{\delta}\Big)$, there exist $\theta_a \in (0, \delta)$  and polynomials $P^a_{k}(x)$ of degree at most $2k$, such that for $q > \frac{r+1}{2r_1}+1$ and $\alpha > \theta_a$, for all $f \in \fF^{q}_{r+1, \alpha}$
$$\EXP(f(S_N-aN))e^{I(a)N} = \sum_{k = 0}^{\lfloor r/2 \rfloor} \frac{1}{N^{k+1/2}}\int  P^a_{k}(x) f_{\theta_a}(x)\, dx+ C^q_{r+1}(f_{\theta_a})\cdot o_{r,a}\left(\frac{1}{N^{\frac{r+1}{2}}}\right)\,\,\,\,{\text as}\,\,N \to \infty,$$
where $f_{\theta}(x)=\frac{1}{2\pi} e^{-\theta x}f(x)$ and $I(a)=\sup_{\theta\in (0,\delta)}[a\theta-\log \lambda(\theta)]=a\theta_a-\log \lambda(\theta_a).$
\end{thm}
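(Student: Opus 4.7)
The plan is to combine Cram\'er's exponential tilt with the weak Edgeworth expansion machinery of \cite[\S4]{FL} applied to a suitably twisted transfer operator. Let $\theta_a\in(0,\delta)$ be the unique solution of $(\log\lambda)'(\theta_a)=a$; its existence and uniqueness on $(0,\delta)$ follow from the strict convexity of $\log\lambda$ in Condition $[C]$ together with the range restriction $a<\log\lambda(\delta)/\delta$, and one has $I(a)=a\theta_a-\log\lambda(\theta_a)$. Setting $g(x):=e^{-\theta_a x}f(x)=2\pi f_{\theta_a}(x)$, a direct manipulation gives
\[
\EXP\!\left(f(S_N-aN)\right)e^{I(a)N} \;=\; \frac{1}{\lambda(\theta_a)^N}\,\EXP\!\left(e^{\theta_a S_N}\,g(S_N-aN)\right).
\]
The hypothesis $\alpha>\theta_a$ ensures that $g$ and its first $q$ derivatives are integrable with finite $(r{+}1)$-th moments, and that $C^q_{r+1}(g)\le C\,C^q_{r+1}(f_{\theta_a})$ for a constant $C$ depending only on $\theta_a$ and $\alpha$.

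Next, I would introduce the tilted family $\tilde{\cL}_z := \cL_{\theta_a+z}/\lambda(\theta_a)$, which by Condition $[B]$ inherits the full Nagaev--Guivarc\textsc{\char13}h package on the strip $|\mathrm{Re}(z)|<\delta-\theta_a$: analyticity, a simple leading eigenvalue $\tilde\lambda(z):=\lambda(\theta_a+z)/\lambda(\theta_a)$ with $\tilde\lambda(0)=1$ and a spectral gap, and the decay estimates $(B3)$--$(B4)$ with the same exponents (using \Cref{B1-B4} we may take $r_2$ as large as required). A direct computation gives $\tilde\lambda'(0)=(\log\lambda)'(\theta_a)=a$ and $(\log\tilde\lambda)''(0)=(\log\lambda)''(\theta_a)>0$, so $aN$ is the correct centering for the tilted process and its asymptotic variance is strictly positive. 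Fourier-inverting $g$ and invoking \eqref{MainAssum} yields
\[
\EXP\!\left(e^{\theta_a S_N}\,g(S_N-aN)\right) \;=\; \frac{\lambda(\theta_a)^N}{2\pi}\int_{\reals}\hat g(s)\,e^{isaN}\,\ell\!\left(\tilde{\cL}_{-is}^N v\right)ds,
\]
at which point the problem is structurally identical to the one treated in \cite{FL}, now with operator $\tilde{\cL}$ and observable $g$.

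The remainder of the argument follows the template of \cite[\S4]{FL}. I would split the $s$-range into $|s|\le K/\sqrt N$, $K/\sqrt N<|s|\le K$, $K<|s|\le N^{r_1-\epsilon}$, and $|s|>N^{r_1-\epsilon}$. In the central region, inserting $\tilde{\cL}_{-is}^N=\tilde\lambda(-is)^N\,\tilde\Pi_{-is}+\tilde\Lambda_{-is}^N$ and Taylor-expanding $\exp\!\left(N\log\tilde\lambda(-is)+isaN\right)$ around $s=0$ (where the $O(s)$ term cancels by the choice of $\theta_a$, leaving a Gaussian factor $\exp(-N(\log\lambda)''(\theta_a)s^2/2)$) produces, after the substitution $s=u/\sqrt N$, a series whose $k$th coefficient is a polynomial expression in the derivatives of $\log\tilde\lambda$ at $0$; pairing this with the Taylor series of $\hat g(-s)$ at $0$ and identifying $\hat g^{(j)}(0)$ with moments of $g$ gives the leading terms $N^{-k-1/2}\int P^a_k(x)f_{\theta_a}(x)\,dx$ with $\deg P^a_k\le 2k$, and the factor $\ell(\tilde\Pi_0 v)=\ell(\Pi_{\theta_a}v)>0$ from Condition $[C]$ is absorbed into $P^a_0$. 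The intermediate and large-$s$ regions contribute only to the error: $(B3)$ controls $K/\sqrt N<|s|\le K$, $(B4)$ together with \Cref{B1-B4} controls $K<|s|\le N^{r_1-\epsilon}$, and the tail $|s|>N^{r_1-\epsilon}$ is handled by the decay $|\hat g(s)|\lesssim\|g^{(q)}\|_{L^1}|s|^{-q}$. The threshold $q>\tfrac{r+1}{2r_1}+1$ is chosen precisely so that the tail integral is $o(N^{-(r+1)/2})$.

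The main obstacle will be tracking the dependence of every error term on the observable through the norm $C^q_{r+1}(f_{\theta_a})$, uniformly in $f\in\fF^q_{r+1,\alpha}$: the Taylor remainders in the central region must be matched with the moment factor $\|x^{r+1}g\|_{L^1}$ via repeated differentiation of $\hat g$, while the smoothness factor $\|g^{(q)}\|_{L^1}$ must dominate the tail. This is essentially bookkeeping once the translation from $\cL$ to $\tilde{\cL}$ is in place, and is carried out in detail for the analogous CLT setting in \cite{FL}; no new dynamical input beyond $[B]$ and $[C]$ is required.
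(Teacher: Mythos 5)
Your approach matches the paper's essentially verbatim: tilt by $\theta_a$, rewrite $\EXP(f(S_N-aN))e^{I(a)N}$ as a Fourier integral against the normalized operator $\overline{\cL}_s = e^{-ias}\cL_{\theta_a+is}/\lambda(\theta_a)$, carry out the region-splitting Taylor argument of \cite{FL} (this is exactly the paper's Lemma~\ref{WeakEXPforOp}), and check that $f\in\fF^q_{r+1,\alpha}$ with $\alpha>\theta_a$ implies $f_{\theta_a}\in\fF^q_{r+1}$. One bookkeeping point to tighten: the central region should be $|s|\lesssim\sqrt{\log N/N}$ rather than $K/\sqrt{N}$ with $K$ fixed (so the truncated Gaussian tail is $o(N^{-(r+1)/2})$), and on the band $\sqrt{\log N/N}\lesssim |s|<\bar\delta$ the decay comes from the quantitative eigenvalue estimate $|\ol\mu(s)|\le e^{-\sigma_a^2 s^2/4}$ rather than from $(B3)$, which only yields a uniform contraction once $|s|$ is bounded away from~$0$.
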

\begin{rem}\label{UniqWeak2}
We note that for a given $a$, the polynomials $P^a_k$'s are unique. To see this, f{i}x $a$. 
From \Cref{UniqWeak}, $D^f_k(a)=\int P^a_k(x)f_{\theta_a}(x)\, dx$ are unique for all $k$. Assume there exist two polynomials, $P^a_k$ and $\tP^a_k$ with $\int P^a_k(x)f_{\theta_a}(x)\, dx = \int \tP^a_k(x)f_{\theta_a}(x)\, dx$. Since $C^\infty_c([0,1]) \subset \fF^{q}_{r+1, \alpha}$ and $\{f_{\theta_a} | f \in C^\infty_c([0,1])\}$ is dense in $L^1[0,1]$, we have for all $f \in L^1[0,1]$, $\int P^a_k(x)f(x)\, dx = \int \tP^a_k(x)f(x)\, dx$ we have that $P^a_k(x)=\tP^a_k(x)$ for $x \in [0,1]$. So, $P^a_k=\tP^a_k$.
\end{rem}
\begin{thm}\label{StrongExp}
Let $r \in \naturals$, $r \geq 2$. Suppose that conditions $[B]$ and $[C]$ hold with $r_1 > r/2$.  Then, for all $a\in \Big(0, \frac{\log{\lambda(\delta)}}{\delta}\Big)$, %there exist constants $D_k(a)$ such that 
$$\Prob(S_N \geq aN)e^{I(a)N} = \sum_{k=0}^{\lfloor r/2 \rfloor} \frac{D_k(a)}{N^{k+1/2}}+  o_{r,a}\left(\frac{1}{N^{\frac{r+1}{2}}}\right)\,\,\,\,{\text as}\,\,N \to \infty,$$
where $D_k(a)=\frac{1}{2\pi}\int_0^\infty e^{-\theta_a x}P^a_{k}(x)\, dx.$
\end{thm}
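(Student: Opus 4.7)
The plan is to deduce the strong expansion from Theorem \ref{WeakExp} by a sandwich argument that rigorously substitutes $f = \mathbf{1}_{[0,\infty)}$ into the weak expansion; this substitution is not directly permitted since $\mathbf{1}_{[0,\infty)} \notin \fF^q_{r+1,\alpha}$, but formally it turns each coefficient integral $\int P^a_k(x) f_{\theta_a}(x)\,dx$ into exactly $D_k(a) = \frac{1}{2\pi}\int_0^\infty e^{-\theta_a x}P^a_k(x)\,dx$. First I would truncate the tail: writing $\Prob(S_N \geq aN) = \Prob(aN \leq S_N \leq aN+M) + \Prob(S_N > aN + M)$ with $M = K\log N$, the Chernoff bound at parameter $\theta_a$ combined with $\EXP(e^{\theta_a S_N}) \leq C\lambda(\theta_a)^N$ (from \eqref{EigenDeco}) yields $\Prob(S_N > aN+M) \leq C e^{-I(a)N}\, N^{-\theta_a K}$, which is $o(N^{-(r+1)/2})e^{-I(a)N}$ once $K$ is large enough.

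Next, I would construct $C^\infty$ cutoff functions $f^\epsilon_\pm$ compactly supported in $[-1, M+1]$ with $f^\epsilon_- \leq \mathbf{1}_{[0,M]} \leq f^\epsilon_+$ and $f^\epsilon_+ - f^\epsilon_-$ concentrated in $[-\epsilon,\epsilon]\cup[M-\epsilon, M+\epsilon]$. Standard mollifier bounds give $\|(f^\epsilon_\pm)^{(j)}\|_{L^1} \lesssim \epsilon^{1-j}$, and since these are compactly supported they automatically lie in $\fF^q_{r+1,\alpha}$ for every $\alpha > 0$. Applying Theorem \ref{WeakExp},
\begin{equation*}
\EXP\bigl(f^\epsilon_\pm(S_N - aN)\bigr)e^{I(a)N} = \sum_{k=0}^{\lfloor r/2\rfloor}\frac{1}{N^{k+1/2}}\int P^a_k(x) f^\epsilon_{\theta_a,\pm}(x)\,dx + C^q_{r+1}(f^\epsilon_{\theta_a,\pm})\cdot o\!\left(\frac{1}{N^{(r+1)/2}}\right).
\end{equation*}
Each coefficient integral differs from $D_k(a)$ by $\frac{1}{2\pi}\int P^a_k(x) e^{-\theta_a x}(f^\epsilon_\pm - \mathbf{1}_{[0,\infty)})\,dx$, which is $O(\epsilon)$ from the transition near $0$ plus $O(e^{-\theta_a M}\,M^{2k}) = o(N^{-(r+1)/2})$ from the piece on $(M,\infty)$.

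Finally I would balance $\epsilon = N^{-s}$. The indicator-approximation error in the $k=0$ term contributes $O(\epsilon/N^{1/2}) = O(N^{-s-1/2})$, which is $o(N^{-(r+1)/2})$ provided $s > r/2$. The weak-expansion remainder is $C^q_{r+1}(f^\epsilon_{\theta_a,\pm})\cdot o(N^{-(r+1)/2}) \lesssim N^{s(q-1)}\cdot o(N^{-(r+1)/2})$; since the proof of Theorem \ref{WeakExp} proceeds by Fourier inversion on $|s|\leq N^{r_1}$, the implicit $o(\cdot)$ carries additional polynomial decay controlled by $r_1$. The hypothesis $r_1 > r/2$ is exactly what is needed to choose $s$ and $q$ jointly so that $s > r/2$, $q > \frac{r+1}{2r_1}+1$, and $s(q-1)$ is small enough to be absorbed by this additional decay. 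Sandwiching the bounds for $f^\epsilon_\pm$ with the tail estimate from the first step then yields the claim.

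The main technical obstacle is this final balancing: fine smoothing is required for the approximation of $\mathbf{1}_{[0,\infty)}$, but it blows up $C^q_{r+1}(f^\epsilon_{\theta_a,\pm}) \sim \epsilon^{1-q}$, and the sharp quantitative hypothesis $r_1 > r/2$ is invoked precisely to keep these competing demands compatible.
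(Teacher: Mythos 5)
Your route is genuinely different from the paper's. The paper does \emph{not} deduce Theorem \ref{StrongExp} from Theorem \ref{WeakExp}. Instead, it uses the Berry--Ess\'een inequality to establish a uniform-in-$x$ Edgeworth expansion of order $r+1$ for the tilted distribution function $G_N$ (equation \eqref{EdgeFiniteMeas}), which via Lemma \ref{Strong->Weak} yields a weak expansion valid on the larger class $\fF^1_{r+1}$ (only one derivative, not $q$ derivatives, controls the error). Since $1_{[0,\infty)}$ \emph{is} a pointwise limit of functions with uniformly bounded $C^1_{r+1}$-norms (the explicit construction in \Cref{fk}), Lemma \ref{LimitAsymp} then gives the strong result with no $\epsilon$-vs-$N$ balancing at all. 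The condition $r_1 > r/2$ enters in that argument to control the integral $J_2$ over $K < |s| < Bn^{r/2}$ in the Berry--Ess\'een estimate.

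Your mollification/balancing route has a real gap as written. You apply Theorem \ref{WeakExp} as a black box and assert that ``the implicit $o(\cdot)$ carries additional polynomial decay controlled by $r_1$,'' so that the factor $C^q_{r+1}(f^\epsilon_{\theta_a,\pm}) \sim N^{s(q-1)}$ can be absorbed. But the theorem's conclusion $C^q_{r+1}(f_{\theta_a})\cdot o_{r,a}(N^{-(r+1)/2})$ does not by itself grant that; indeed the paper explicitly remarks, right after the proof of Theorem \ref{StrongExp}, that $1_{[0,\infty)}$ cannot be approximated with $C^q_{r+1}$ bounded once $q>1$, and hence ``\Cref{WeakExp} does not automatically give us strong expansions.'' To make your balance rigorous you would need a quantitative refinement of Lemma \ref{WeakEXPforOp}: specifically that the \emph{only} error piece sensitive to higher derivatives of $g$ is the far-tail integral over $|s|>N^{r_1}$, bounded by $C\,\|g^{(q)}\|_{L^1}/N^{r_1(q-1)}$, while every other piece is controlled by $\|g\|_{L^1}$ or $C^0_{r+1}(g)$ (which stay bounded under your mollification) times a rate that is $o(N^{-(r+1)/2})$ independently of $g$. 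With that sharper bookkeeping your balance does close: since $r_1 > r/2$ you can take $s\in(r/2,r_1)$ and then $q$ large enough that $(r_1-s)(q-1) > (r+1)/2$, so the far-tail contribution is $O(N^{-(r_1-s)(q-1)}) = o(N^{-(r+1)/2})$ and the $k=0$ smoothing error $O(\epsilon N^{-1/2}) = O(N^{-s-1/2})$ is also $o(N^{-(r+1)/2})$. So the plan is salvageable, but only after reopening the proof of the weak expansion rather than citing its stated conclusion; the paper's Berry--Ess\'een route avoids this bookkeeping entirely.
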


Moreover, we can evaluate the pre-exponential factor in LDPs under significantly weaker conditions. Namely, we obtain an improved version of Theorem E of \cite{HH} with  precise asymptotics. %under similar assumptions. Namely, under assumptions similar to those in Theorem E of \cite{HH}, we obtain the following theorem.
\begin{thm}\label{FirstTerm}
Suppose that $(B1), (B2), (B3)$ and $[C]$ hold. Then, for every $a \in \Big(0,\frac{\log{\lambda(\delta)}}{\delta}\Big)$, 
\[\Prob(S_N \geq aN)e^{I(a)N} =  \frac{\ell(\Pi_{\theta_a} v)\sqrt{I''(a)}}{\theta_a\sqrt{2\pi N}}\Big(1 + o(1)\Big)\,\,\,\,{\text as}\,\,N \to \infty.\] 
\end{thm}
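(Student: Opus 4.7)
The plan is to apply Cram\'er's exponential tilt to convert the rare event $\{S_N \geq aN\}$ into a centered event under a new measure, and then extract the pre-exponential factor via a quantitative central limit theorem for the tilted partial sums. Let $\theta_a \in (0,\delta)$ be the Legendre dual of $a$, characterized by $(\log\lambda)'(\theta_a)=a$; its existence and positivity follow from the strict convexity of $\log\lambda$ given in $[C]$. Define the tilted probability measure by $d\tProb_{\theta_a}/d\Prob = e^{\theta_a S_N}/\EXP[e^{\theta_a S_N}]$ and set $W_N = S_N - aN$. A direct computation gives
\[
\Prob(S_N \geq aN) \;=\; \EXP[e^{\theta_a S_N}]\,e^{-\theta_a aN}\,\tEXP_{\theta_a}\!\bigl[e^{-\theta_a W_N}\mathds{1}_{W_N\geq 0}\bigr].
\]
Using the spectral decomposition from $(B1)$, $(B2)$, \eqref{EigenDeco}, and $[C]$ together with the Legendre identity $\log\lambda(\theta_a)-\theta_a a = -I(a)$, this simplifies to
\[
\Prob(S_N \geq aN)\,e^{I(a)N} \;=\; \ell(\Pi_{\theta_a}v)\,\bigl(1 + O(\rho^N)\bigr)\,J_N,\qquad J_N := \tEXP_{\theta_a}\!\bigl[e^{-\theta_a W_N}\mathds{1}_{W_N\geq 0}\bigr],
\]
for some $\rho\in(0,1)$. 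It remains to prove that $\sqrt{N}\,J_N \to \sqrt{I''(a)}/(\theta_a\sqrt{2\pi})$.

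Next, I would analyze $W_N$ under $\tProb_{\theta_a}$ through its characteristic function
\[
\widetilde{\phi}_N(\xi) \;=\; e^{-i\xi a N}\,\ell(\cL_{\theta_a+i\xi}^N v)\big/\ell(\cL_{\theta_a}^N v).
\]
For $|\xi|$ small, the spectral decomposition from $(B2)$ persists for $\cL_{\theta_a+i\xi}$ by continuity $(B1)$, and Taylor-expanding yields $\log\lambda(\theta_a+i\xi)=\log\lambda(\theta_a)+i\xi a - \xi^2\sigma^2/2 + O(\xi^3)$ with $\sigma^2 := (\log\lambda)''(\theta_a) = 1/I''(a)$ (the last identity is Legendre duality). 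Hence $\widetilde{\phi}_N(\xi/\sqrt{N}) \to e^{-\xi^2\sigma^2/2}$, giving a CLT for $W_N/\sqrt{N}$ under $\tProb_{\theta_a}$. To convert this into the asymptotic for $J_N$, I would use the layer-cake identity
\[
J_N \;=\; \theta_a \int_0^\infty e^{-\theta_a s}\,\tProb_{\theta_a}(0 \leq W_N \leq s)\,ds,
\]
approximate $\tProb_{\theta_a}(0\leq W_N\leq s)$ by its Gaussian leading order $s/(\sigma\sqrt{2\pi N})$, and apply dominated convergence to obtain
\[
\sqrt{N}\,J_N \;\longrightarrow\; \theta_a\int_0^\infty e^{-\theta_a s}\,\frac{s}{\sigma\sqrt{2\pi}}\,ds \;=\; \frac{1}{\theta_a\sigma\sqrt{2\pi}} \;=\; \frac{\sqrt{I''(a)}}{\theta_a\sqrt{2\pi}}.
\]

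The main obstacle is making the CLT-based approximation of $\tProb_{\theta_a}(0\leq W_N\leq s)$ quantitative enough to justify the dominated-convergence step. A uniform Berry--Esseen bound on the CDF of $W_N$ only produces error $O(1/\sqrt{N})$, which matches the leading term $s/(\sigma\sqrt{2\pi N})$ in size; the necessary cancellation comes from estimating the \emph{difference} $\tProb_{\theta_a}(W_N\leq s) - \tProb_{\theta_a}(W_N\leq 0)$ rather than each term separately, and this difference admits a first-order Edgeworth-type refinement. Both the uniform Berry--Esseen bound (for domination) and the pointwise first-order refinement (for identifying the limit) can be extracted from the spectral machinery under $(B1)$--$(B3)$ alone: Taylor expansion handles the regime $|\xi|<\eta$, the combination of continuity $(B1)$ with the strict spectral gap $(B3)$ yields uniform decay of $\|\cL_{\theta_a+i\xi}^N\|/\lambda(\theta_a)^N$ on each compact annulus $\eta \leq |\xi| \leq K$, and an Esseen-type smoothing of the Heaviside indicator absorbs the tail $|\xi|>K$ while contributing only $o(1/\sqrt{N})$ additional error. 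Because only one level of correction beyond the Gaussian is needed for the leading-order pre-exponential factor, the stronger high-frequency decay of $(B4)$ required for Theorem~\ref{StrongExp} is not needed here.
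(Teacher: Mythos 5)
Your proposal is correct and shares the main skeleton with the paper's argument: tilt by $\theta_a$, pass to the characteristic function of the tilted sum, use the spectral decomposition near $\xi=0$, and control the off-center part using $(B1)$ and $(B3)$ only. Crucially, you make the same observation the paper relies on: since the Esseen smoothing of the Heaviside target at error level $o(1/\sqrt{N})$ only requires integrating the characteristic-function discrepancy over $|t|\leq B\sqrt{N}$, i.e. $|\xi|\leq B$ after rescaling, this range is compact in $\xi$, so uniform exponential decay there follows from $(B1)$+$(B3)$ and compactness, and $(B4)$ is not needed.

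Where you diverge from the paper is in how you extract the pre-factor from the order-$1$ Edgeworth expansion. The paper routes through general machinery: it establishes a strong order-$1$ Edgeworth expansion for the unnormalized tilted CDF $G_N$, converts it to a weak expansion for test functions via integration by parts and Plancherel (its Lemma~\ref{Strong->Weak}), and then passes to $\mathds{1}_{[0,\infty)}$ by an abstract approximation lemma (its Lemma~\ref{LimitAsymp}), finally evaluating $P_0^a$ explicitly. You instead normalize to a genuine probability measure $\widetilde{\Prob}_{\theta_a}$, pull out $\ell(\Pi_{\theta_a}v)(1+O(\rho^N))$ from $\EXP[e^{\theta_a S_N}]$, and use the layer-cake identity
\[
J_N=\theta_a\int_0^\infty e^{-\theta_a s}\,\widetilde{\Prob}_{\theta_a}(0\le W_N\le s)\,ds,
\]
together with dominated convergence: the uniform Berry--Esseen bound supplies the dominating function $s\mapsto Cs+C$, and the order-$1$ Edgeworth refinement applied to the \emph{difference} $\widetilde F_N(s)-\widetilde F_N(0^-)$ gives the pointwise limit $\sqrt{N}\,\widetilde{\Prob}_{\theta_a}(0\le W_N\le s)\to s/(\sigma\sqrt{2\pi})$. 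Your route is more direct and self-contained for the first-order statement and does not require the abstract approximation-by-$C_c^\infty$ step; the paper's route is heavier here but is structured so that the same two lemmas also serve the higher-order Theorem~\ref{StrongExp}. You also correctly identify the one subtlety that a plain uniform Berry--Esseen bound alone cannot establish the pointwise limit (the error and the main term are both $\Theta(1/\sqrt{N})$), and that the cancellation needed is obtained from the first Edgeworth correction; this is exactly the role the paper's order-$1$ strong expansion of $G_n$ plays.
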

\begin{rem}
Analogous results hold for $a \in \Big(\frac{\log(\lambda(-\delta))}{-\delta},0 \Big)$. In fact, one can deduce the corresponding results for $a<0$ by considering $\{-X_n\}_{n \geq 1}$ and functions that are right exponential of order $\alpha$. However, for simplicity we focus only on $a>0$.  
\end{rem}

\section{Proofs of the main results}\label{Proofs}

Recall from \Cref{LDPHolds} that the LDP given by \eqref{ExpRate} holds under the conditions $[B]$ and $[C]$. That is, given $a \in \Big(0, \frac{\log{\lambda(\delta)}}{\delta}\Big)$, there exists $\theta_a \in (0,\delta)$ such that $$\lim_{N \to \infty}\frac{1}{N} \log \Prob(S_N \geq aN)=-I(a),$$
where $I(a)=\sup_{\theta\in (0,\delta)}[a\theta-\log \lambda(\theta)]=a\theta_a-\log \lambda(\theta_a).$ So we fix $a \in \Big(0, \frac{\log{\lambda(\delta)}}{\delta}\Big)$, and take $\theta_a$ to denote value of $\theta \in (0,\delta)$ for which $I(a)$ is achieved. Since $\theta_a$ is the unique maximizer of analytic function $f(\theta)=a\theta - \log \lambda (\theta)$ on $(0,\delta)$, $f'(\theta_a)=0$. That is,
\begin{equation}\label{Derivative}
a=\frac{\lambda'(\theta_a)}{\lambda(\theta_a)}
\end{equation}

	 \begin{proof}[Proof of Theorem \ref{WeakExp}]
	 	Observe that
	 	\begin{align*}
	 	\EXP(f(S_n-an))e^{a\theta_a n} &= \EXP(e^{\theta_a S_n}e^{-(S_n-an)\theta_a}f(S_n-an)) \\ &= \int \widehat{f}_{\theta_a}(s) e^{-iasn} \ell(\cL^n_{\theta_a+is}v) \, ds,
	 	\end{align*}
	 	where $f_{\theta_a}(x)=\frac{1}{2\pi} e^{-\theta_a x}f(x)$.
	 	Define, $\overline{\cL}_{s}=\frac{e^{-ias}}{\lambda(\theta_a)}\cL_{\theta_a+is}$. %where $\lambda(\theta)$ is the top eigenvalue of $\cL_\theta$. 
	 	Then, 
	 	\begin{align*}
	 	\EXP(f(S_n-an))e^{a\theta_a n} &=\lambda(\theta_a)^n\int \widehat{f}_{\theta_a}(s) \ell(\overline{\cL}^n_{s} v) \, ds.
	 	\end{align*}
	 	From this, we have
	 	\begin{equation}\label{BaseLDP}
	 	\EXP(f(S_n-an))e^{I(a)n} = \EXP(f(S_n-an))e^{[a\theta_a -\log \lambda(\theta_a)]n} = \int \widehat{f}_{\theta_a}(s) \ell(\overline{\cL}^n_{s}v) \, ds.
	 	\end{equation}
	 	%In particular, 
	 	%\begin{equation}
	 	%= \int \widehat{f}_{\theta_a}(s) \ell(\overline{\cL}^N_{s}v) \, ds.
	 	%\end{equation}
	 	
% Combining this with $$\EXP(e^{\theta S_N}) = \lambda(\theta)^N\ell(\Pi_\theta v)+\ell(\Lambda_\theta^N v)$$ we conclude that, \begin{align*} \ell(\Pi_\theta v)= \lim_{N \to \infty} \frac{\EXP(e^{\theta S_N})}{\lambda(\theta)^N}\end{align*} for all $\theta$. 	 
The following lemma (whose proof we postpone till the end of the proof of the theorem) allows us to obtain the asymptotics of \eqref{BaseLDP}. 
\begin{lem}\label{WeakEXPforOp} 
Suppose conditions $[B]$ and $[C]$ hold. Let $r \in \naturals$. Then, for all $a \in \Big(0, \frac{\log{\lambda(\delta)}}{\delta}\Big),$ there are polynomials $P^a_{k}(x)$ of degree at most $2k$, such that for $g\in \fF^{q}_{r+1}$,  $q > \frac{r+1}{2r_1} + 1$,
$$\int \wh{g}(s) \ell(\overline{\cL}^N_{s}v) \, ds=\sum_{k=0}^{\lfloor r/2 \rfloor} \frac{1}{N^{k+1/2}}\int  P^a_{k}(x) g(x)\, dx+ C^q_{r+1}(g) \cdot o_{r,a}\left(\frac{1}{N^{\frac{r+1}{2}}}\right).$$
\end{lem}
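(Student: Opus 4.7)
The plan is to treat $\int\wh g(s)\,\ell(\overline\cL_s^N v)\,ds$ as an operator-valued analogue of the Fourier integral that drives the classical Edgeworth expansion, and to adapt the weak-expansion machinery of \cite{FL} to the Cram\'er-tilted family $\overline\cL_s=e^{-ias}\cL_{\theta_a+is}/\lambda(\theta_a)$. Since $\overline\cL_0=\cL_{\theta_a}/\lambda(\theta_a)$ has $1$ as a simple isolated eigenvalue, condition $(B2)$ supplies a spectral decomposition
\[
\overline\cL_s=\overline\lambda(s)\,\overline\Pi_s+\overline\Lambda_s,\qquad \overline\lambda(s)=\frac{e^{-ias}\lambda(\theta_a+is)}{\lambda(\theta_a)},
\]
holomorphic near $s=0$. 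The choice \eqref{Derivative} of $\theta_a$ forces $(\log\overline\lambda)'(0)=0$, and $[C]$ gives $(\log\overline\lambda)''(0)=-\sigma^2<0$ with $\sigma^2=(\log\lambda)''(\theta_a)$; hence near $s=0$,
\[
\overline\lambda(s)^N=e^{-N\sigma^2 s^2/2}\,\exp\!\Big(N\sum_{j\ge 3}\gamma_j s^j\Big),
\]
exactly the Gaussian-plus-cumulant structure behind Edgeworth expansions.

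\textbf{Splitting and the main region.} First I would split $\reals=M_1\cup M_2\cup M_3\cup M_4$ with $M_1=\{|s|\le\epsilon\}$, $M_2=\{\epsilon<|s|\le K\}$, $M_3=\{K<|s|\le N^{r_1-\epsilon_0}\}$ and $M_4=\{|s|>N^{r_1-\epsilon_0}\}$, where $\epsilon>0$ is small and $\epsilon_0>0$ is arbitrarily small. On $M_1$ I use the spectral decomposition to absorb $\overline\Lambda_s^N$ into an exponentially small remainder (its spectral radius is strictly below $1$ by $(B2)$), then expand the holomorphic factors to obtain
\[
\ell(\overline\cL_s^N v)=e^{-N\sigma^2 s^2/2}\sum_{n\ge 0}\tilde R_n(N)\,s^n\ +\ \text{(exp.\ small)},
\]
where each $\tilde R_n(N)$ is a polynomial in $N$ of degree at most $n/3$ coming from expanding $\exp(N\sum_{j\ge 3}\gamma_j s^j)$ and $\ell(\overline\Pi_s v)$. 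Substituting the Taylor expansion $\wh g(s)=\sum_m\frac{(-is)^m}{m!}\int x^m g(x)\,dx$ (controlled by $C^q_{r+1}(g)$) and evaluating $\int s^{n+m}e^{-N\sigma^2 s^2/2}\,ds$ (zero for $n+m$ odd, of order $N^{-(n+m+1)/2}$ otherwise), the order $N^{-(k+1/2)}$ contribution collects only indices with $(n+m)/2=p+k$ and $p\le n/3$, which forces $m\le 2k$. This yields polynomials $P^a_k(x)=\sum_{m=0}^{2k}c^a_{k,m}x^m$ whose coefficients are identified inductively by the recipe of \cite[Section 4]{FL}; uniqueness is exactly Remark~\ref{UniqWeak2}.

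\textbf{The three non-main regions.} On the compact annulus $M_2$, $(B3)$ together with upper semicontinuity of the spectral radius gives $\|\overline\cL_s^N\|\le C\mu^N$ uniformly for some $\mu<1$, so the $M_2$ contribution is exponentially small. On $M_3$, $(B4)$ combined with Remark~\ref{B1-B4} (at the cost of shrinking $r_1$ to $r_1-\epsilon_0$) yields $\|\overline\cL_s^N\|\le N^{-m}$ for $m$ arbitrarily large, while $|\wh g(s)|\le\|g\|_{L^1}\le C^q_{r+1}(g)$; hence $M_3$ contributes $o(N^{-(r+1)/2})$. On $M_4$, the smoothness of $g$ gives $|\wh g(s)|\le C^q_{r+1}(g)|s|^{-q}$; combined with the bound on $\|\overline\cL_s\|$ that $(B1)$ provides along the vertical line $\mathrm{Re}(z)=\theta_a$, integration over $|s|>N^{r_1-\epsilon_0}$ contributes at most $C^q_{r+1}(g)\cdot N^{-(q-1)(r_1-\epsilon_0)}$. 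This is $o(N^{-(r+1)/2})$ precisely when $q>\frac{r+1}{2r_1}+1$, after choosing $\epsilon_0$ small enough, which accounts for the numerology in the hypothesis.

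\textbf{Main obstacle.} The principal technical point is the Taylor remainder on $M_1$: the factor $\exp(N\sum_{j\ge 3}\gamma_j s^j)$ is not uniformly small on $M_1$ because $Ns^3$ is only small for $|s|\ll N^{-1/3}$. The standard remedy is to further split $M_1$ at $|s|=N^{-\beta}$ for some $\beta\in(1/3,1/2)$, Taylor-truncate the cumulant factor and $\ell(\overline\Pi_s v)$ on $\{|s|\le N^{-\beta}\}$ to an order determined by $r$, and bound the annulus $\{N^{-\beta}\le|s|\le\epsilon\}$ by the superpolynomially small Gaussian envelope $e^{-cN^{1-2\beta}}$. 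Careful bookkeeping of the constants $C^q_{r+1}(g)$ through every estimate, ensuring they multiply only the final $o(\cdot)$ remainder and never leak into the main terms $\int P^a_k g$, is the essential work that closes the argument.
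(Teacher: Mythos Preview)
Your approach is essentially the paper's: the same four-region split, the same use of $(B2)$--$(B4)$ region by region, and the same cumulant/Taylor expansion near $s=0$. The paper rescales $s\to s/\sqrt N$ before expanding (which turns your $N$-polynomials $\tilde R_n(N)$ into the $N$-independent polynomials $A_k(s)$) and uses the inner cutoff $\sqrt{D\log N/N}$ in place of your $N^{-\beta}$, $\beta\in(1/3,1/2)$; both choices work and the bookkeeping is equivalent.

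One small point to fix: on $M_4$ you invoke $(B1)$ for a uniform bound on $\|\overline\cL_s\|$ along the vertical line $\mathrm{Re}(z)=\theta_a$, but $(B1)$ only asserts continuity on the strip and gives no control as $|s|\to\infty$. What is actually needed---and what the paper uses without comment---is the uniform bound $|\ell(\overline\cL_s^N v)|\le C$, which follows directly from \eqref{MainAssum}:
\[
\bigl|\ell(\cL_{\theta_a+is}^N v)\bigr|=\bigl|\EXP\!\bigl(e^{(\theta_a+is)S_N}\bigr)\bigr|\le \EXP\!\bigl(e^{\theta_a S_N}\bigr)=\ell(\cL_{\theta_a}^N v)\le C\,\lambda(\theta_a)^N.
\]
With this in hand your $M_4$ estimate goes through exactly as written.
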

\noindent
We refer to this expansion as the weak expansion of $\ol{\cL}_s$ for $g \in \fF^q_{r+1}$.

Since $f \in \fF^{q}_{r+1, \alpha}$ with $\alpha>\theta_a$, we have that $f_{\theta_a} \in \fF^{q}_{r+1}$. We show this when $r=0$ and $q=1$. The argument for general $q$ and $r$ is similar. Suppose, $f(x),f'(x),xf(x) \in L^1$, $f'(x)$ is continuous and exponential order $\alpha > \theta_a$. It is clear that $(e^{-\theta_a x}f(x))^\prime = -\theta_a e^{-\theta_a x}f(x) + e^{-\theta_a x}f'(x)$ is continuous. We need to show that $e^{-\theta_a x}f(x)$, $(e^{-\theta_a x}f(x))^\prime$ and $x e^{-\theta_a x}f(x)$ are absolutely integrable. Since $f^\prime$ is  exponential of order $\alpha$, given $\epsilon >0$, there exists an $M > 0$ such that for all $x \leq -M$,
$-\epsilon e^{\alpha x} \leq f'(x) \leq \epsilon e^{\alpha x}$,
and therefore, 
\[
-\int_{-\infty}^{x}\epsilon e^{\alpha y}\, dy \leq \int_{-\infty}^{x}f'(y)\, dy \leq \int_{-\infty}^{x}\epsilon e^{\alpha y}\, dy.
\]
In addition, our assumptions imply that $\lim_{|x| \to \infty}f(x) = 0$. Thus,
$
-\frac{\epsilon}{\alpha} e^{\alpha x} \leq f(x) \leq \frac{\epsilon}{\alpha}  e^{\alpha x},
$
which shows that $f$ is also exponential of order $\alpha$.

Now, it remains to show that $e^{-\theta_a x}f(x), e^{-\theta_a x}f^{\prime}(x), x e^{-\theta_a x}f(x) \in L^1$. This is true because there is $M>0$ such that for $x<-M$, $|e^{-\theta_a x}f^\prime(x)|<e^{(\alpha - \theta_a)x} $, $|e^{-\theta_a x}f(x)| < e^{(\alpha - \theta_a)x} $ and $|xe^{-\theta_a x}f(x)| < -x e^{(\alpha - \theta_a)x} $. 

Finally, to complete the proof of \Cref{WeakExp} we apply \Cref{WeakEXPforOp} to $f_{\theta_a}$. 
\end{proof}

\begin{proof}[Proof of Lemma \ref{WeakEXPforOp}]
For a fixed $\theta_a \in (-\delta, \delta)$, from \eqref{EigenDeco} and perturbation theory of bounded linear operators (see \cite[Chapter 7]{Kato}), there exists $\delta_1  \in (0, \delta)$ such that for all $|s| \leq \delta_1$, $\cL_{\theta_a + is}$ can be expressed as  
\begin{equation}\label{OpDecom}
 \cL_{\theta_a + is} =\lambda(\theta_a+is)\Pi_{\theta_a+is}+\Lambda_{\theta_a+is},
\end{equation}
where $\Pi_{\theta_a+is}$ is the eigenprojection to the top eigenspace of $\cL_{\theta_a+is}$, the spectral radius of $\Lambda_{\theta_a+is}$ is less than $|\lambda(\theta_a+is)|$, and $\Lambda_{\theta_a+is}\Pi_{\theta_a+is} = \Pi_{\theta_a+is}\Lambda_{\theta_a+is} = 0$. In addition, the spectral data are analytic with respect to the perturbation parameter because the perturbations are analytic. That is, $z\mapsto \lambda(z)$, $z \mapsto \Pi_{z}$ and $z \mapsto \Lambda_z$ are analytic in a neighbourhood of $z_0=\theta_a + i0$ (see \cite[Chapter 7]{Kato}).  

Iterating \eqref{OpDecom}, we obtain 
\begin{equation}\label{ItrOpDecom}
	\cL^n_{{\theta_a+is}}=\lambda(\theta_a+is)^n\Pi_{\theta_a+is}+\Lambda^n_{\theta_a+is}.
	\end{equation}
Define $\ol{\Pi}_s = \Pi_{\theta_a+is}$ and $\ol{\Lambda}_s = \frac{e^{-ias}}{\lambda(\theta_a)}\Lambda_{\theta_a+is}$. Then, for all $|s| < \delta_1$,
\begin{equation}\label{Normalized}
\ol\cL^n_{s} =\ol\mu(s)^n\ol\Pi_{s}+\ol\Lambda^n_{s},
\end{equation}
where $\ol\cL_{s}=\frac{e^{-ias}}{\lambda(\theta_a)}\cL_{\theta_a+is}$ and $\ol\mu(s)=\frac{e^{-ias}\lambda(\theta_a+is)}{\lambda(\theta_a)} $.

%Note that the top eigenvalue of $\overline{\cL}_{s}$ is $\overline{\mu}(s)=\frac{e^{-ias}}{\lambda(\theta_a)}\lambda(\theta_a+is)$.
From \eqref{Derivative} and the condition $[C]$,
\begin{equation}\label{derivatives}
\begin{split}
\overline{\mu}(0)=1,\ \ \overline{\mu}^\prime&(0)=\frac{d}{ds}\overline{\mu}(s)\Big|_{s=0}= -ia + i
\frac{\lambda'(\theta_a)}{\lambda(\theta_a)}=0\ \ \text{and}\\
\overline{\mu}''(0)=&a^2-\frac{\lambda''(\theta_a)}{\lambda(\theta_a)}=-(\log \lambda)^{\prime\prime}(\theta_a)=:-\sigma^2_a 
\end{split}
\end{equation}   
for some $\sigma_a>0$. Thus, there exists $\ol\delta$ such that 
\begin{equation}\label{EigenBound}
|\overline{\mu}(s)| \leq e^{-\sigma_a^2s^2/4},\ |s|<\ol\delta.
\end{equation}	 	

First, we estimate the contribution to $\int \wh{g}(s) \ell(\overline{\cL}^N_{s}v) \, ds$  from the region away from $s = 0$.
Fix $\ol\delta>0$ as in \eqref{EigenBound}. 
Due to (B3), the spectral radius of $\ol{\cL}_{s}$ is strictly less than $1$. Since $s \mapsto \ol{\cL}_{s}$ is continuous, there exists $c_0 \in (0,1)$ such that $\|\overline{\cL}^n_{s}\|\leq c^n _0$ for all $\delta \leq |s| \leq K$ ($K$ as in (B4)). Thus,
$$\bigg| \int_{\ol\delta<|s|<K} \wh{g}(s)  \ell(\overline{\cL}^n_{s}v )\, ds \bigg|\leq C\|g\|_1c^n_0.$$ Due to \Cref{B1-B4}, without loss of generality we assume that $r_2 > r_1+(r+1)/2$. From (B4), 
\begin{align*}
\bigg| \int_{K<|s|<n^{r_1}} \wh{g}(s)\ell(\overline{\cL}^n_{s}v )\, ds \bigg| \leq \frac{C\|g\|_1}{\lambda(\theta_a)^n}\int_{K<|s|<n^{r_1}} \|\cL^n_{\theta_a+is} \|\, ds  &\leq \frac{C\|g\|_1}{n^{r_2-r_1}} \\ &=\|g\|_1 \cdot o(n^{-(r+1)/2}).
\end{align*}

Since $g \in \fF^{q}_{r+1}$, we have that $s^q\wh{g}(s)=(-i)^q\wh{g^{(q)}}(s)$ and $\wh{g^{(q)}}$ is bounded. Therefore,
\begin{align}\label{AtInftyLDP}
\bigg| \int_{|s|>n^{r_1}} \wh{g}(s)\ell(\overline{\cL}^n_{s}v ) \, ds \bigg| \leq C\int_{|s|>n^{r_1}} |\wh{g}(s)| \, ds &\leq C\int_{|s|>n^{r_1}}  \Big|\frac{\widehat{g^{(q)}}(s)}{s^q}\Big| \, ds \\ &\leq C\frac{\|\widehat{g^{(q)}}\|_{\infty}}{n^{r_1(q- 1)}} \nonumber \\ &=  C^{q}_{r+1}(g) \cdot o (n^{-(r+1)/2}). \nonumber
\end{align}
Note that the integral $\int_{|s|>n^{r_1}}  \Big|\frac{1}{s^q}\Big| \, ds$ is finite since  $q > \frac{r+1}{2r_1} + 1 > 1$. 
Combining these estimates, we obtain 
\begin{equation}\label{AwayFrm0Err}
\bigg| \int_{|s|>\ol\delta} \wh{g}(s)\ell(\overline{\cL}^n_{s}v ) \, ds \bigg| = C^{q}_{r+1}(g)\cdot o(n^{-(r+1)/2}).
\end{equation}

From \eqref{EigenBound}, we know that for all  $|s| < \ol{\delta}\sqrt{n}$,
$|\ell(\ol{\cL}^n_{s/\sqrt{n}}v)| \leq C e^{-\frac{1}{4}\sigma_a^2 s^2}.$
Thus, for $\sqrt{D\log n} \leq |s| \leq \ol{\delta}\sqrt{n}$, 
$|\ell(\ol{\cL}^n_{s/\sqrt{n}}v)| \leq C n^{-\sigma_a^2 D}.$
Therefore, 
\begin{align*}
\bigg| \int_{\sqrt{\frac{D\log n}{n}} \leq |s| \leq \ol{\delta}} \wh{g}(s)\ell(\overline{\cL}^n_{s}v )\, ds \bigg|  &= \bigg| \int_{\sqrt{{D\log n}} \leq |u| \leq \ol{\delta}\sqrt{n}} \wh{g}(\frac{u}{\sqrt{n}})\ell(\overline{\cL}^n_{\frac{u}{\sqrt{n}}}v )\, \frac{du}{\sqrt{n}} \bigg| \\& \leq C \frac{1}{n^{\sigma_a^2 D}}\bigg| \int_{\sqrt{{D\log n}} \leq |u| \leq \ol{\delta}\sqrt{n}} \wh{g}(\frac{u}{\sqrt{n}}) \frac{du}{\sqrt{n}} \bigg|  \\& = C \frac{1}{n^{\sigma_a^2 D}}\bigg| \int_{\sqrt{\frac{D\log n}{n}} \leq |s| \leq \ol{\delta}} \wh{g}(s) \, ds \bigg| \\&\leq \frac{2C\ol{\delta}\|g\|_1}{n^{\sigma_a^2 D}}.
\end{align*}
Choosing $D > \frac{r+1}{2\sigma^2_a}$, we have 
\begin{equation}\label{MidEstLDP}
\bigg| \int_{\sqrt{\frac{D\log n}{n}} \leq |s| \leq \ol{\delta}} \wh{g}(s)\ell(\overline{\cL}^n_{s}v )\, ds \bigg|  = C^{q}_{r+1}(g) \cdot o(n^{-(r+1)/2}).
\end{equation}

Using (B3) and compactness, there exist $C>0$ and $\epsilon \in (0,1)$ (which do not depend on $n$ and $s$) such that $\|\ol\Lambda^n_s\|\leq C\epsilon^n$ for all $|s|\leq\delta_1$. By \eqref{Normalized}, 
\begin{equation}\label{eq:char fn}
\ell(\overline{\cL}^n_{s/\sqrt{n}} v)= \ol\mu\Big(\frac{s}{\sqrt{n}}\Big)^n \ell\big( \ol\Pi_{s/\sqrt{n}} v \big) + \ell\big(\ol\Lambda^n_{s/\sqrt{n}} v\big).
\end{equation}
Let us focus on the first term of \eqref{eq:char fn}. Put $Z(s)=\ell( \ol\Pi_{s} v)$. Note that $Z(s)$ is analytic on $|s|<\delta_1$ because $s \mapsto \ol{\Pi}_s$ is analytic.

Now we are in a position to compute $P^a_k(x)$. To this end we make use of ideas in \cite{FL}. From $\eqref{derivatives}$, function $\log\ol\mu$ can be written as
\begin{align*}
\log \ol\mu\Big(\frac{s}{\sqrt{n}}\Big) = -\frac{\sigma^2_as^2}{2n} + \psi\Big(\frac{s}{\sqrt{n}}\Big),
\end{align*}
where $\psi$ denotes the error term, $\psi(0)=\psi'(0)=\psi''(0)=0$ and $\psi(s)$ is analytic. That is 
\begin{align*}
\ol\mu\Big(\frac{s}{\sqrt{n}}\Big)^n=e^{-\frac{\sigma^2_as^2}{2}}\exp \Big( n\psi\Big(\frac{s}{\sqrt{n}}\Big)\Big).
\end{align*}
Denote by $s^2\psi_r(s)$ the order $(r+2)$ Taylor approximation of $\psi$. Then, $\psi_r$ is the unique polynomial such that $\psi(s)=s^2\psi_r(s)+o(|s|^{r+2})$. Also, $\psi_r(0)=0$ and $\psi_r$ is a polynomial of degree $r$. In fact, we can write $\psi(s)=s^2\psi_r(s)+s^{r+2}\tilde{\psi}_r(s)$, where $\tilde{\psi}_r$ is analytic and $\tilde{\psi}_r(0)=0$. Thus,
\begin{align*}
\exp \Big( n\psi\Big(\frac{s}{\sqrt{n}}\Big) \Big)=\exp\Big(s^2 \psi_r\Big(\frac{s}{\sqrt{n}}\Big)+\frac{1}{n^{r/2}}s^{r+2}\tilde{\psi}_r\Big(\frac{s}{\sqrt{n}}\Big)\Big).
\end{align*}

%By \eqref{eq:char fn} we have,
%\begin{equation}\label{CharFn1} \ell(\overline{\cL}^n_{s/\sqrt{n}} v)=e^{-\frac{\sigma^2_as^2}{2}} \exp \Big( n\psi\Big(\frac{s}{\sqrt{n}}\Big) \Big)Z\Big(\frac{s}{\sqrt{n}}\Big) + \ell(\ol\Lambda^n_{s/\sqrt{n}} v).
%\end{equation}
Denote by $Z_r(s)$ the order $r$ Taylor expansion of $Z(s)-Z(0)$. Then, $Z_r(0)=0$ and $Z(s)=Z(0)+Z_r(s)+s^{r}\tilde{Z}_r(s)$ with analytic $\tilde{Z}_r(s)$ such that $\tilde{Z}_r(0)=0$. Now, substituting the Taylor expansions for $\log \ol \mu (s)$ and $Z(s)$, and taking $\overline{Z}_r$ to be the remainder of $\log Z(s)$ when approximated by powers of $Z_r$ up to order $r$:
\begin{align*}
e^{\frac{\sigma^2_as^2}{2}}&\ol\mu^n\Big(\frac{s}{\sqrt{n}}\Big)Z\Big(\frac{s}{\sqrt{n}}\Big) \nonumber \\&= e^{\frac{\sigma^2_as^2}{2}}\ol\mu^n\Big(\frac{s}{\sqrt{n}}\Big)\exp \log Z\Big(\frac{s}{\sqrt{n}}\Big) \nonumber \\ &=Z(0)\exp\Big(s^2 \psi_r\Big(\frac{s}{\sqrt{n}}\Big)+\frac{1}{n^{r/2}}s^{r+2}\tilde{\psi}_r\Big(\frac{s}{\sqrt{n}}\Big) \nonumber \\ &\phantom{=\exp\Big(t^2 \psi_r\Big(\frac{t}{\sqrt{n}}\Big)++} +\sum_{k=1}^{r}\frac{(-1)^{k+1}}{kZ(0)^{k}}\Big[Z_r\Big(\frac{s}{\sqrt{n}}\Big)\Big]^k +\frac{1}{n^{r/2}Z(0)}s^r\overline{Z}_r\Big(\frac{s}{\sqrt{n}}\Big)\Big)  \nonumber \\&=Z(0) \Big[ 1+ \sum_{m=1}^r \frac{1}{m!} \Big[s^2 \psi_r\Big(\frac{s}{\sqrt{n}}\Big) + \sum_{k=1}^{r}\frac{(-1)^{k+1}}{k Z(0)^{k}}\Big(Z_r\big(\frac{s}{\sqrt{n}}\big)\Big)^k\Big]^m\Big]  \nonumber \\ \nonumber &\phantom{=\exp\Big(s} +Z(0) \Big[ \frac{1}{n^{r/2}}s^{r+2}\tilde{\psi}_r\Big(\frac{s}{\sqrt{n}}\Big) +\frac{1}{n^{r/2}Z(0)}s^r\overline{Z}_r\Big(\frac{s}{\sqrt{n}}\Big)+s^{r+1}\cO\big(n^{-\frac{r+1}{2}}\big)\Big]. \nonumber 
\end{align*}

Take $\varphi(s)= ns^2Z(0)\tilde{\psi}_r(s)+\overline{Z}_r(s)$. It is clear that $\varphi(s)$ is analytic and $\varphi(0)=0$. Now, collecting terms in the RHS according to ascending powers of $n^{-1/2}$ we obtain,
\begin{align}\label{PolyComp}
e^{\frac{\sigma^2_as^2}{2}}&\ol\mu^n\Big(\frac{s}{\sqrt{n}}\Big)Z\Big(\frac{s}{\sqrt{n}}\Big)= \sum_{k=0}^{r} \frac{A_k(s)}{n^{k/2}}+\frac{s^{r}}{n^{r/2}}\varphi\Big(\frac{s}{\sqrt{n}}\Big)+s^{r+1}\cO\big(n^{-\frac{r+1}{2}}\big). 
\end{align}

Notice that, $A_k(s)$ (as a function) and $k$ (as an integer) have the same parity. To see this, note that for each $k\geq 0$,  $A_k$'s are formed by collecting terms with the common factor of $n^{-k/2}$. Observe that $\psi_r$ and $Z_r$ are a polynomial in $\frac{s}{\sqrt{n}}$ with no constant term, and therefore when we take powers of $s^2\psi_r\Big(\frac{s}{\sqrt{n}}\Big)$ and $Z_r\Big(\frac{s}{\sqrt{n}}\Big)$, the resulting $A_k$ will contain terms of the form $c_m s^{2m+k}$. 

Note that $A_0 \equiv Z(0)$. The highest power of $s$ in $A_k$, $k \geq 1$, is a result from the term $C s^2 \frac{s}{\sqrt{n}}$ in $s^2 \psi_r\big(\frac{s}{\sqrt{n}}\big)$ being raised to its $k^{\text{th}}$ power, i.e., $m=k$ above. Thus, $A_k$ are polynomials of degree $3k$. The lowest power of $s$ in $A_k$ corresponds to $m=0$ and is equal to $k$. Next, define $\beta_{n,r}$ by
\begin{align}\label{MainPolyExp}
\beta_{n,r}(s)=\sum_{k=0}^{r} \frac{A_k(s)}{n^{k/2}}.
\end{align}

We write the Taylor approximation of $\wh{g}$:
$$\widehat{g}(s) = \sum_{j=0}^r \frac{\widehat{g}^{(j)}(0)}{j!}s^j+ \frac{s^{r+1}}{(r+1)!}\widehat{g}^{(r+1)}(\epsilon(s)),$$
where $0\leq| \epsilon(s)| \leq |s|$ and $$|\widehat{g}^{(r+1)}(\epsilon(s))|=\bigg|\int x^{r+1}e^{-i\epsilon(s)x}g(x) \, dx\bigg| \leq \int |x^{r+1}g(x)| \, dx \leq C^{0}_{r+1}(g).$$ Therefore,
\begin{align*}
\int_{|s|<\sqrt{D \log n}} &\widehat{g}\Big(\frac{s}{\sqrt{n}}\Big)\ell(\overline{\cL}^n_{s/\sqrt{n}}v ) \, ds \\ &= \sum_{j=0}^r \frac{\widehat{g}^{(j)}(0)}{j!n^{j/2}} \int_{|s|<\sqrt{D\log n}}s^j\ell(\overline{\cL}^n_{s/\sqrt{n}}v ) \, ds \nonumber  \\ &\phantom{aaaaa}+ \frac{1}{n^{(r+1)/2}}\frac{1}{(r+1)!}\int_{|s|<\sqrt{D\log n}}\ell(\overline{\cL}^n_{s/\sqrt{n}}v ) s^{r+1}  \widehat{g}^{(r+1)}\Big(\epsilon\Big(\frac{s}{\sqrt{n}}\Big)\Big)\, ds, \nonumber
\end{align*}
where
\begin{align*}
\bigg|\int_{|s|<\sqrt{D\log n}} \ell(\overline{\cL}^n_{s/\sqrt{n}}v ) s^{r+1} &\widehat{g}^{(r+1)}\Big(\epsilon\Big(\frac{s}{\sqrt{n}}\Big)\Big)\, ds \bigg|\leq C^0_{r+1}(g) \int |s|^{r+1} e^{-cs^2} \, ds
\end{align*}
for large $n$. Hence,
\begin{multline}\label{TaylorfHat}
\int_{|s|<\sqrt{D \log n}} \widehat{g}\Big(\frac{s}{\sqrt{n}}\Big) \ell(\overline{\cL}^n_{s/\sqrt{n}}v ) \, ds \\ = \sum_{j=0}^r \frac{\widehat{g}^{(j)}(0)}{j!n^{j/2}} \int_{|s|<\sqrt{D\log n}}s^j\ell(\overline{\cL}^n_{s/\sqrt{n}}v ) \, ds  + C^0_{r+1}(g)\cdot \cO(n^{-(r+1)/2}).
\end{multline}
From $\eqref{PolyComp}$, 
\begin{align}\label{FreqAsympPoly}
e^{\frac{\sigma^2_as^2}{2}}\ell(\overline{\cL}^n_{s/\sqrt{n}}v )&=\exp \Big( n\psi\Big(\frac{s}{\sqrt{n}}\Big) \Big)Z\Big(\frac{s}{\sqrt{n}}\Big) + e^{\frac{\sigma^2_as^2}{2}}\ell\big(\ol\Lambda^n_{s/\sqrt{n}}v \big) \nonumber \\  
&= \sum_{k=0}^{r} \frac{A_k(s)}{n^{k/2}}+\frac{s^{r}}{n^{r/2}}\varphi\Big(\frac{s}{\sqrt{n}}\Big)+C_{r,a} \cdot \cO\Big(\frac{\log^{(r+1)/2}(n)}{n^{(r+1)/2}}\Big)
\end{align}
for $|s|<\sqrt{D \log n}$.
Substituting this in \eqref{TaylorfHat},
\begin{align}\label{Near0WeakEdge}
&\int_{|s|<\sqrt{D \log n}} \widehat{g}\Big(\frac{s}{\sqrt{n}}\Big) \ell(\overline{\cL}^n_{s/\sqrt{n}}v )\, ds \\ &= \sum_{j=0}^r \frac{\widehat{g}^{(j)}(0)}{j!n^{j/2}} \int_{|s|<\sqrt{D\log n}}s^j e^{-\sigma^2_as^2/2}\sum_{k=0}^{r} \frac{A_k(s)}{n^{k/2}} \, ds + C^0_{r+1}(g) \cdot \cO\Big(\frac{\log^{(r+1)/2}(n)}{n^{(r+1)/2}}\Big) \nonumber \\ &= \sum_{k=0}^{r}  \sum_{j=0}^r  \frac{\widehat{g}^{(j)}(0)}{j!n^{(k+j)/2}} \int_{|s|<\sqrt{D\log n}}s^j A_k(s) e^{-\sigma^2_as^2/2} \, ds + C^0_{r+1}(g)\cdot o(n^{-r/2}). \nonumber
\end{align}

Since $A_k$ and $k$ have the same parity, if $k+j$ is odd then $$\int_{|s|<\sqrt{D\log n}}s^j A_k(s) e^{-\sigma^2_as^2/2} \, ds = 0.$$
So only the positive integer powers of $n^{-1}$ will remain in the expansion. Also, there is $C$ that depends only on $r$ and $a$ such that
\begin{align*}
\int_{|s|\geq \sqrt{D\log n}}s^j A_k(s) e^{-\sigma^2_as^2/2} \, ds \leq C \int_{|s|\geq \sqrt{D\log n}}s^{4r} e^{-\sigma^2_as^2/2} \, ds  \leq \frac{C_{r,a}}{n^{\sigma^2_aD/4}}.
\end{align*}
Choosing $D$ such that $2\sigma_a^2D>(r+1)/2$, $$\int s^j A_k(s) e^{-\sigma^2_as^2/2} \, ds= \int_{|s|\leq \sqrt{D\log n}}s^j A_k(s) e^{-\sigma^2_as^2/2} \, ds + C_{r,a}\cdot o(n^{-r/2}).$$
Therefore, fixing $D$ large, we can assume the integrals to be over the whole real line. 

Now, define $b_{kj}=\int s^j A_k(s) e^{-\sigma^2_a s^2/2} \, ds $ 
and substitute $\widehat{g}^{(j)}(0)= \int (-is)^j g(s) \, ds$
in \eqref{Near0WeakEdge} to obtain
\begin{align*}
\int_{|s|<\sqrt{D \log n}}\widehat{g}&\Big(\frac{s}{\sqrt{n}}\Big)\ell(\overline{\cL}^n_{s/\sqrt{n}}v ) \, ds \\ &= \sum_{k=0}^{r}  \sum_{j=0}^r  \frac{b_{kj}}{j!n^{(k+j)/2}} \int (-is)^j g(s) \, ds  + C^0_{r+1}(g) \cdot o(n^{-r/2}) \\ &= \sum_{m=0}^r \frac{1}{n^m} \int g(s) \sum_{k+j=2m} \frac{b_{kj}}{j!} (-is)^j  \, ds + C^0_{r+1}(g) \cdot o(n^{-r/2}) \nonumber \\&= \sum_{m=0}^{\lfloor r/2 \rfloor} \frac{1}{n^m} \int g(s) P_m^a(s) \, ds + C^0_{r+1}(g) \cdot o(n^{-r/2}), \nonumber
\end{align*}
where 
\begin{equation}\label{LDPEXPPoly}
P^a_{m}(s)= \sum_{k+j=2m} \frac{b_{kj}}{j!}(-is)^j.
\end{equation}
Combining, the above with \eqref{AtInftyLDP} and \eqref{MidEstLDP} we obtain the required result. 
\end{proof}

Take $F_N$ to be the distribution function of $S_N$. Let $\wt{S}_N$ be a function defined on some finite measure space $(\Omega, \cF, \wt P)$ such that it induces the finite measure $\frac{e^{\theta_a x}\, }{\lambda(\theta_a)^N}dF_N(x)$ on $\reals$. Note that $\wt{S}_N$ is not a random variable since the measure it induces on $\reals$ is not a probability measure. Take $G_N(x)$ to be the distribution function of $\frac{\wt{S}_N-aN}{\sqrt{N}}$. That is,
\begin{equation}\label{DistStilde}
G_N(x) = \tilde P\bigg( \frac{\wt{S}_N-aN}{\sqrt{N}} \leq x \bigg).
\end{equation} 
Then, from the definition of the operator $\bar{\cL}$ in \eqref{Normalized}, we obtain $\wh{G}_N(s \sqrt{N})=\ell(\ol{\cL}^N_{s}v)$ for all $s \in \reals$ because  $$\int e^{i\frac{x-aN}{\sqrt{N}}s} \frac{e^{\theta_a x}\, }{\lambda(\theta_a)^N}dF_N(x)=\ell (\ol{\cL}_{s/\sqrt{N}}^Nv) .$$ 
Also, recall that for $|s|<\delta_1 \sqrt{N}$, where $\delta_1$ is as in proof of \Cref{WeakEXPforOp}, 
$$\ell (\ol{\cL}_{s/\sqrt{N}}^Nv) = \ol\mu\Big(\frac{s}{\sqrt{N}}\Big)^N\ell(\ol\Pi_{s/\sqrt{N}} v)+\ell(\ol\Lambda^N_{s/\sqrt{N}} v).$$
From \eqref{PolyComp} and the estimate $\|\ol\Lambda^N_{s}\|\leq C\epsilon^N$ (which is explained below the equation \eqref{MidEstLDP}) for $|s|<\delta_1 $, we conclude that RHS converges to $Z(0)e^{-\frac{\sigma^2_as^2}{2}}$ as $N\to \infty$. Hence, $G_N(x)$ converges to the function  $Z(0)\fN(x)$, where $\fn(x)=\frac{1}{\sqrt{2\pi \sigma^2_a}}e^{-\frac{x^2}{2\sigma^2_a}}$ and $\fN(x)=\int_{-\infty}^x \fn(y) \, dy$. 

We denote the function inducing the measure  $Z(0)\fN(x)$ on the real line by $Z(0)\cN(0, \sigma_a^2)$.
Thus, $\frac{\wt{S}_N-aN}{\sqrt{N}}$ converges weakly to $Z(0)\cN(0, \sigma_a^2)$.

Observe that
\begin{align*}
\int (x-aN)&\frac{e^{\theta_a x}\, }{\lambda(\theta_a)^N}dF_N(x)\\ & = \frac{d}{ds}\Big|_{s = 0}\int e^{i(x-aN)s}\frac{e^{\theta_a x}\, }{\lambda(\theta_a)^N}dF_N(x)\\
& = \frac{d}{ds}\Big|_{s = 0}\Big(\ol\mu(s)^N\ell(\ol\Pi_s v)+\ell(\ol\Lambda^N_s v)\Big) \\&= N\ol\mu(0)^{N-1}\ol\mu'(0)\ell(\ol\Pi_0 v) + \ol\mu(0)^N\frac{d}{ds}\Big|_{s = 0} \ell(\ol\Pi_s v) +  \frac{d}{ds}\Big|_{s = 0} \ell(\ol\Lambda^N_s v).
\end{align*}
From \eqref{derivatives}, we have $\ol\mu'(0) = 0$, $\ol{\mu}(0) = 1$, and therefore 
\begin{align}\label{MeanAsym}
\frac{1}{N}\int (x-aN)\frac{e^{\theta_a x}\, }{\lambda(\theta_a)^N}dF_N(x)=\frac{1}{N}\ell(\ol\Pi'_0 v) + \frac{1}{N}\ell ((\ol\Lambda^N_0)' v).
\end{align}

We claim $\|(\ol\Lambda^N_0)'\| \leq C \epsilon^N$ for some $\ve \in (0,1)$. Since $\cL_0$ has a spectral gap, by perturbation theory, there exists $\ve \in (0,1)$ (uniform for $|s| \leq \delta_1$) such that $$\text{sp}(\cL_s) \subset \{z \in \complex||z|<\epsilon\}\cup \{\lambda(\theta+is)\}.$$
Therefore, $$\ol\Lambda^N_s = \frac{1}{2\pi i} \int_\Gamma z^N (z-\ol{\cL}_s)^{-1} \, dz$$
where $\Gamma$ is the positively oriented circle centered at $z=0$ with radius $\ve$. Hence, 
\begin{align*}
\ol\Lambda^N_s - \ol\Lambda^N_s = \frac{1}{2\pi i} \int_\Gamma z^N (z-\ol{\cL}_o)^{-1}(\ol\cL_s-\ol\cL_0)(z-\ol{\cL}_s)^{-1} \, dz.
\end{align*}
Now the claim follows from the observation that $\ol\cL_s-\ol\cL_0 = \cO(|s|)$. So, both the terms on RHS of \eqref{MeanAsym} converge to zero as $N \to \infty$. Therefore, $\wt{S}_N$ has asymptotic mean $a$. 

We say that $\ol{\cL}_{s}$ admits a strong asymptotic expansion of order $r$ if $\wt{S}_N$ admits the Edgeworth expansion of order $r$, i.e., there exist polynomials $Q_k$ (whose parity as a function is the opposite of the parity of $k$) such that 
\begin{equation}\label{EdgeFiniteMeas}
G_N(x)-Z(0)\fN(x)=Z(0)\sum_{k=1}^r \frac{Q_k(x)}{N^{k/2}}\fn(x) + o(N^{-r/2})
\end{equation}
uniformly for $x \in \reals$, where $\fn(x)=\frac{1}{\sqrt{2\pi \sigma^2_a}}e^{-\frac{x^2}{2\sigma^2_a}}$ and $\fN(x)=\int_{-\infty}^x \fn(y) \, dy$.  Note that these expansions, if they exist, are unique (the argument in \Cref{UniqStrng} applies).
	 
The proof of the existence of the strong expansions is based on two intermediate lemmas (\Cref{Strong->Weak} and \Cref{LimitAsymp} below). The first lemma establishes that whenever the order $r$ strong asymptotic expansion for $\ol{\cL}_{s}$ exists, lower order weak expansions (as in \Cref{WeakEXPforOp}) exist for $g\in \fF^1_r$. It is the Proposition A.1 in \cite{FL} adapted to our setting. The second lemma shows that whenever $\ol{\cL}_{s}$ has weak expansions for $g \in \fF^1_{r}$ the corresponding $S_N$ has strong expansions (of the corresponding order) for large deviations. Finally, to prove \Cref{StrongExp}, we have to show that the conditions $[B]$ and $[C]$ imply the existence of strong expansions for $\ol{\cL}_{s}$.

\begin{lem}\label{Strong->Weak}
Suppose that $\ol{\cL}_{s}$ admits the order $r$ strong asymptotic expansion. Then there are polynomials $P_k$ such that
\begin{align*}
\int \wh{g}(s)\ell(\ol{\cL}^n_{s})\, ds =\sum_{m=0}^{\lfloor (r-1)/2\rfloor}\frac{1}{n^{m+\frac{1}{2}}}\int_\reals P_{m}(s)g(s)\, ds+ C^1_{r}(g) \cdot o\left(n^{-r/2}\right).
\end{align*}
for $g \in \fF^1_r$. %$($We refer to this expansion as the order $r-1$ weak expansion of $\ol{\cL}_{s}$ for $g \in \fF^1_r$.$)$  
\end{lem}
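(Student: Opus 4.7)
The plan is to transform the spectral integral on the left-hand side into an integral of $g$ against the rescaled measure whose Edgeworth-type expansion is furnished by \eqref{EdgeFiniteMeas}, and then read off the polynomials $P_m$ by a double integration by parts followed by a Taylor expansion of the Gaussian factors.

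First I would invoke the identity $\wh G_N(s\sqrt{N})=\ell(\ol\cL^N_s v)$ established just before the lemma, and use Fubini together with Fourier inversion to rewrite
\[
\int \wh g(s)\,\ell(\ol\cL^N_s v)\,ds \;=\; 2\pi\int g(\sqrt{N}\,y)\,dG_N(y) \;=\; 2\pi\int g(x)\,dH_N(x),
\]
where $H_N(x):=G_N(x/\sqrt{N})$ is the distribution function of the finite measure induced on $\reals$ by $\wt S_N-aN$ under $\wt P$. The hypothesis $g,g'\in L^1$ forces $g\to 0$ at $\pm\infty$, while $H_N$ has uniformly bounded total mass $\ell(\Pi_{\theta_a}v)+o(1)$, so one integration by parts gives $\int g\,dH_N=-\int g'(x)H_N(x)\,dx$. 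Substituting the strong expansion \eqref{EdgeFiniteMeas} evaluated at $x/\sqrt N$, the uniform remainder contributes at most $o(N^{-r/2})\|g'\|_{L^1}$ to this integral.

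Next I would integrate by parts a second time to move the derivative back off $g$ onto the Gaussian-polynomial factors. For the cumulative term, writing $\fN(y)=\tfrac12+\int_0^y\fn$ and using $\int g'\,dx=0$ yields $-\int g'(x)\fN(x/\sqrt N)\,dx=\tfrac{1}{\sqrt N}\int g(x)\fn(x/\sqrt N)\,dx$; for each correction, Gaussian decay of $Q_k\fn$ kills the boundary terms and $(Q_k\fn)'(y)=T_k(y)\fn(y)$ with $T_k(y):=Q'_k(y)-yQ_k(y)/\sigma_a^2$. Hence
\[
\int g\,dH_N \;=\; Z(0)\sum_{k=0}^{r} \frac{1}{N^{(k+1)/2}}\int g(x)\,T_k(x/\sqrt N)\fn(x/\sqrt N)\,dx \;+\; o(N^{-r/2})\|g'\|_{L^1},
\]
with $T_0\equiv 1$. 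For each $k\in\{0,\dots,r-1\}$ I would Taylor-expand $T_k(y)\fn(y)$ about $y=0$ to order $r-k-1$, substitute $y=x/\sqrt N$, and control the Taylor remainder through $\|x^{r-k}g\|_{L^1}\le C^1_r(g)$ to obtain a uniform bound $O(N^{-(r+1)/2})C^1_r(g)$. Collecting all main terms by powers of $N^{-1/2}$, the contribution to the coefficient of $N^{-(k+j+1)/2}$ is $Z(0)(T_k\fn)^{(j)}(0)/j!\cdot\int x^jg(x)\,dx$.

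The last ingredient is a parity argument: since $Q_k$ has parity opposite to that of $k$, the polynomial $T_k$ has the parity of $k$, and $T_k\fn$ inherits that parity; therefore $(T_k\fn)^{(j)}(0)=0$ whenever $k+j$ is odd. Only even values $k+j=2m$ survive, and since $k+j\le r-1$ forces $m\le\lfloor(r-1)/2\rfloor$, the surviving coefficients assemble into
\[
P_m(x)\;=\;2\pi Z(0)\sum_{\substack{k+j=2m\\ 0\le k\le 2m}}\frac{(T_k\fn)^{(j)}(0)}{j!}\,x^{j},
\]
a polynomial of degree at most $2m$, yielding the claimed expansion. The hard part will be the bookkeeping: ensuring that the boundary terms in both integrations by parts vanish, justifying the Fubini interchange needed for the initial Fourier-inversion identity, and combining the Taylor-truncation error with the strong-expansion remainder into a single uniform $C^1_r(g)\cdot o(N^{-r/2})$ bound — above all, verifying that the parity cancellation truly eliminates every contribution at the half-integer powers $N^{-m-1/2}$ with $m>\lfloor(r-1)/2\rfloor$.
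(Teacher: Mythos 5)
Your argument is correct and leads to the right polynomials, but it reaches them along a genuinely different path from the paper. Where the paper splits $dG_n = d\cE_{r,n} + d(G_n - \cE_{r,n})$, integrates by parts only the remainder piece, and then applies Plancherel to rewrite $\int g(x\sqrt n)\,d\cE_{r,n}(x)$ as a Fourier-side integral $\frac{1}{2\pi}\sum_k n^{-(k+1)/2}\int \wh g(s/\sqrt n)A_k(s)e^{-\sigma_a^2 s^2/2}\,ds$ in order to Taylor-expand $\wh g$ about $s=0$, you instead stay entirely in physical space: two integrations by parts move the derivative from $g$ to the Gaussian-polynomial factors, giving $\sum_k N^{-(k+1)/2}\int g(x)\,T_k(x/\sqrt N)\fn(x/\sqrt N)\,dx$, and then you Taylor-expand $T_k\fn$ about $0$ rather than $\wh g$. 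The two routes are Fourier-dual to each other — your $T_k$ coincides with the paper's $R_k=Q_k'+Q_kQ$, and the coefficients $(T_k\fn)^{(j)}(0)$ are, up to the $(-i)^j$ bookkeeping, exactly the moments $b_{jk}=\frac{1}{2\pi}\int s^j A_k(s)e^{-\sigma_a^2 s^2/2}\,ds$ appearing in the paper because $A_k e^{-\sigma_a^2 s^2/2}=\wh{R_k\fn}$ — and the parity argument is the same in either picture ($A_k$ has the parity of $k$, equivalently $T_k\fn$ has the parity of $k$). Your route is slightly more elementary in that it never passes back to the Fourier domain, at the cost of an extra integration by parts; the paper's route needs only one integration by parts but invokes Plancherel a second time. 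Two housekeeping remarks: (i) you carry an explicit $2\pi Z(0)$ in front of $P_m$, while the paper silently absorbs these normalizations (the paper in fact drops the $Z(0)$ when passing from $d\cE_{r,n}$ to $R_k\fn$, and omits the $2\pi$ in its Plancherel step; your bookkeeping is the more careful of the two); (ii) the $\int g'=0$ device for the $\fN$ term is not actually needed — a direct integration by parts already works since $g(\pm\infty)=0$ and $\fN$ is bounded — though it does no harm.
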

\begin{proof}
Suppose $g \in \fF^1_{r}$. Define, 
$\cE_{r,n}(x) = Z(0)\fN(x)+Z(0)\sum_{k=1}^r \frac{Q_k(x)}{n^{k/2}} \fn(x)$.
Observe that $G_n(x)-\cE_{r,n}(x)=o(n^{-r/2})$ uniformly in $x$ and
\begin{align*}
d\cE_{r,n}(x) &= Z(0)\fn(x)\, dx + Z(0)\sum_{k=1}^r \frac{1}{n^{k/2}} \left[Q'_k\left(x\right)\fn\left(x\right) + Q_k(x)\fn'(x)\right]\, dx \\ &=Z(0)\sum_{p=0}^r \frac{R_k(x)}{n^{p/2}} \fn(x)\, dx,
\end{align*}
where $R_k$ are polynomials given by $R_k=Q'_k+Q_kQ$ and $Q$ is such that $\fn'(x)=Q(x) \fn(x)$, i.e., 
\begin{equation}\label{FourierPoly1}
\fn(x)R_k(x) = \frac{d}{dx}\Big[\fn(x)Q_k(x)\Big].
\end{equation}
Note that $R_k$ and $Q_k$ are of opposite parity, because $Q(x)$ is of degree $1$. 

Next, we observe that 
\begin{align*}
\int \wh{g}(s)\ell(\ol{\cL}^n_{s})\, ds &= \int \wh{g}(s)\wh{G}_n(s \sqrt{n})\, ds \\ &= \frac{1}{\sqrt{n}}\int \wh{g}\Big( \frac{s}{\sqrt{n}} \Big) \wh{G}_n(s)\, ds \\ &=\int g(x\sqrt{n}) \, dG_n(x)\ \ \ (\text{by Plancherel}) \\ &= \int g(x\sqrt{n}) \, d\cE_{r,n}(x) +\int g(x\sqrt{n})\, d(G_n -\cE_{r,n})(x).
\end{align*}

Now, we integrate by parts and use that $g(\pm \infty)=0$ (because $g \in \fF^1_r$) and the fact that $\cE_{r,n}(\pm\infty)$, $G_n(\pm\infty)$ are finite to obtain
\begin{align}\label{WGEXP}
\int \wh{g}(s)\ell(\ol{\cL}^n_{s})\, ds &=  \int g(x\sqrt{n}) \, d\cE_{r,n}(x) + (G_n-\cE_{r,n})(x)g(x\sqrt{n}) \Big|_{-\infty}^{\infty} \nonumber \\ &\ \hspace{165pt}  - \int (G_n  -\cE_{r,n})(x)\sqrt{n} g^\prime(x\sqrt{n})\, dx \nonumber \\ &= \int \sum_{k=0}^r \frac{1}{n^{k/2}}R_k(x)\fn(x)\, g(x\sqrt{n})dx+ o\left(n^{-r/2}\right)\int\sqrt{n} g^\prime(x\sqrt{n})\, dx  \nonumber \\ &=\sum_{k=0}^r \frac{1}{n^{k/2}} \int R_k(x)\fn(x)\, g(x\sqrt{n})dx+ \|g^\prime\|_1 \cdot o\left(n^{-r/2}\right).
\end{align}

From the Plancherel formula, 
$$\int \sqrt{n}g\big(x\sqrt{n}\big)R_k(x) \fn(x) \, dx = \frac{1}{2\pi}\int \wh{g}\Big(\frac{s}{\sqrt{n}}\Big) A_k(s) e^{-\frac{\sigma^2_as^2}{2}} \, ds, $$
where $\wh{R_k \fn} (s)= A_k(s)e^{-\frac{\sigma^2_as^2}{2}}$ and $A_k$ are given by the following relation,
\begin{equation}\label{FourierPoly}
A_k(s)e^{-\frac{\sigma_a^2 s^2}{2}} = R_k\left(-i\frac{d}{ds}\right)\Big[e^{-\frac{\sigma_a^2 s^2}{2}} \Big].
\end{equation}
%\begin{equation}\label{FourierPoly}
%R_k(s)e^{-\frac{s^2}{2\sigma_a^2}} =\frac{1}{\sqrt{2\pi\sigma_a^2}}A_k\left(-i\frac{d}{ds}\right)\Big[e^{-\frac{s^2}{2\sigma_a^2}} \Big]
%\end{equation}
This follows from the basic Fourier identity $\wh{x^j f}(s) = (-i)^j\frac{d^j}{ds^j}\wh{f}(s)$, and we refer the reader to \cite[Chapter III, IV]{ES} for a detailed discussion. We also note that, by the uniqueness of expansions, these $A_k$ agree with the ones in \eqref{FreqAsympPoly}.
Also, by construction, $R_k$ and $A_k$ have the same parity. This means $A_k$ has the same parity as $k$.

Next, replace $\int R_k(x)\fn(x)\, g(x\sqrt{n})\, dx$ by $\frac{1}{2\pi \sqrt{n}}\int\wh{g}\big(\frac{s}{\sqrt{n}}\big) A_k(s) e^{-\frac{\sigma^2_as^2}{2}} \, ds$ 
in \eqref{WGEXP} to obtain  
\begin{align*}
\int \wh{g}(s)\ell(\ol{\cL}^n_{s})\, ds =\frac{1}{2\pi}\sum_{k=0}^r \frac{1}{n^{(k+1)/2}} \int\wh{g}\Big(\frac{s}{\sqrt{n}}\Big)A_k(s) e^{-\frac{\sigma^2_as^2}{2}} \, ds +  \|g^\prime\|_1 \cdot o\left(n^{-r/2}\right).
\end{align*}
Then, substituting $\wh{g}$ with its order $r-1$ Taylor expansion, 
\begin{align*}
\int \wh{g}(s)\ell(\ol{\cL}^n_{s})\, ds  =\frac{1}{2\pi}\sum_{k=0}^r\sum_{j=0}^{r-1} \frac{\wh{g}^{(j)}(0)}{j!n^{(j+k+1)/2}} \int s^j e^{-\sigma^2_a s^2/2} A_k(s)\, ds + C^1_{r}(g) \cdot o\left(n^{-r/2}\right).
\end{align*}
Put
$$b_{jk}=\frac{1}{2\pi}\int s^j e^{-\sigma^2_a s^2/2} A_k(s)\, ds\ \ \text{and}\ \ \wh{g}^{(j)}(0)=\int (-is)^jg(s) \, ds$$
to obtain
\begin{align*}
\int \wh{g}(s)\ell(\ol{\cL}^N_{s})\, ds =\sum_{k=0}^r\sum_{j=0}^{r-1} \frac{b_{jk}}{j!n^{(j+k)/2}} \int (-is)^j g(s)\, ds + C^1_{r}(g) \cdot o\left(n^{-r/2}\right).
\end{align*}

Since $k$ and $A_k$ are of the same parity, $b_{jk}=0$ when $j+k$ is odd. So we collect terms such that $j+k=2m$ where $m=0,\dots,r-1$ and write
$$P_m(s)=\sum_{j+k=2m}\frac{b_{jk}}{j!}(-is)^j.$$
Then rearranging, simplifying and absorbing higher order terms into the error, we obtain 
\begin{align*}
\int \wh{g}(s)\ell(\ol{\cL}^n_{s})\, ds =\sum_{m=0}^{\lfloor (r-1)/2\rfloor}\frac{1}{n^{m+\frac{1}{2}}}\int P_{m}(s)g(s)\, ds+ C^1_{r}(g) \cdot o\left(n^{-r/2}\right).
\end{align*}
This is the order $r-1$ weak expansion for $g \in \fF^1_r$. 
\end{proof}

\vspace{-10pt}
\begin{lem}\label{LimitAsymp}
Suppose $\{f_k\}$ is a sequence in $\fF^1_{r+1}$ satisfying the following:
\begin{enumerate}
\item[$(a)$] There exists $C>0$ such that $C^1_{r+1}(f_k) \leq C$ for all $k$,
\item[$(b)$] $f_k$ are uniformly bounded in $L^\infty(\reals)$,
\item[$(c)$] $f_k \to f$ pointwise,  
\item[$(d)$] For all $m$,
\begin{equation*}\label{IntCon}
\lim_{k\to \infty}\int  P_{m}(s) f_k(s) \, ds =  \int  P_{m}(s) f(s) \, ds,
\end{equation*}
\item[$(e)$] There exists $N_0$ such that for all $N>N_0$,
\begin{align*}
\EXP(f_k(S_N-aN))e^{I(a)N} =\sum_{m=0}^{\lfloor r/2\rfloor}\frac{1}{N^{m+\frac{1}{2}}}\int_\reals P_{m}(s){f}_k(s)\, ds+ C^1_{r}({f}_k) \cdot o\left(N^{-(r+1)/2}\right).
\end{align*}
\end{enumerate}
Then, for $N>N_0$,
$$ \EXP(f(S_N-aN))e^{I(a)N} = \sum_{m=0}^{\lfloor r/2 \rfloor} \frac{1}{N^{m+\frac{1}{2}}}\int  P_{m}(s) f(s)\, ds + C \cdot o(N^{-(r+1)/2}).$$
\end{lem}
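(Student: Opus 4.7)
The statement is a limit-passing result: the weak expansion (e), which is assumed to hold uniformly for every member of the sequence $\{f_k\}$, transfers to the pointwise limit $f$. The strategy is to fix $N>N_0$, apply (e) to each $f_k$, and let $k\to\infty$, controlling the three pieces (expectation, polynomial integrals, error) separately.

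First, I would rewrite (e) as
\begin{equation*}
\Big|\EXP(f_k(S_N-aN))e^{I(a)N} - \sum_{m=0}^{\lfloor r/2 \rfloor}\frac{1}{N^{m+1/2}}\int P_m(s)f_k(s)\, ds\Big| \le C^1_r(f_k)\cdot\eta(N)\cdot N^{-(r+1)/2},
\end{equation*}
where $\eta(N)\to 0$. The implicit reading is that $\eta$ is independent of $k$ — this is forced by the statement, since a single $N_0$ is required to work for every $f_k$. From hypothesis (a), $C^1_r(f_k)\le C^1_{r+1}(f_k)\le C$, so the right-hand side is bounded by $C\eta(N)N^{-(r+1)/2}$, uniformly in $k$.

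Next I would let $k\to\infty$ for the fixed $N$ and treat each piece: (i) the expectation converges by bounded convergence, using (b) (so $|f_k|\le M$ for all $k$) and (c) ($f_k\to f$ pointwise), which give $f_k(S_N-aN)\to f(S_N-aN)$ pointwise with uniform bound $M$, hence $\EXP(f_k(S_N-aN))\to \EXP(f(S_N-aN))$ (multiplying by the fixed constant $e^{I(a)N}$ preserves convergence); (ii) the polynomial integrals $\int P_m(s)f_k(s)\, ds \to \int P_m(s) f(s)\, ds$ by hypothesis (d) for each $m\le \lfloor r/2\rfloor$; (iii) the error is uniformly bounded by $C\eta(N)N^{-(r+1)/2}$, so this bound survives the limit.

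Combining the three convergences yields
\begin{equation*}
\Big|\EXP(f(S_N-aN))e^{I(a)N} - \sum_{m=0}^{\lfloor r/2 \rfloor}\frac{1}{N^{m+1/2}}\int P_m(s)f(s)\, ds\Big| \le C\eta(N)N^{-(r+1)/2} = C\cdot o(N^{-(r+1)/2}),
\end{equation*}
which is the claim. There is no serious obstacle: the whole argument is a routine dominated/bounded convergence step. The one delicate point, really a matter of interpretation rather than analysis, is recognizing that the $o(N^{-(r+1)/2})$ in (e) must be understood as uniform in $k$ (with rate function $\eta(N)$ independent of $k$) — otherwise the hypothesis that a single $N_0$ serves all $f_k$ would be inconsistent, and the uniform error bound needed to pass to the limit in (iii) would fail.
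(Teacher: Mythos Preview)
Your proposal is correct and follows essentially the same approach as the paper: rewrite (e) as an inequality, use (a) to bound the error term uniformly in $k$, use (b)+(c) with bounded convergence for the expectation, use (d) for the polynomial integrals, and pass to the limit. Your explicit remark that the $o$-term in (e) must be read as uniform in $k$ is exactly the point the paper treats as implicit.
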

\begin{proof}
From $(e)$ and $(a)$ for $N>N_0$,
\begin{align}\label{Assum1}
\Big|\EXP(f_k(S_n-an))e^{I(a)n} -\sum_{m=0}^{\lfloor r/2 \rfloor} \frac{1}{n^{m+\frac{1}{2}}}\int  P_{m}(s) f_k(s)\, ds \Big| &\leq C^1_{r+1}(f_k)\cdot o(n^{-r/2}) \\&\leq C \cdot o(n^{-r/2}).  \nonumber
\end{align}
Now, $(b)$ and $(c)$ give us that 
$$\lim_{k \to \infty} \EXP(f_k(S_n-an))=\EXP(f(S_n-an)).$$
This along with assumption $(d)$ allow us to take the limit $k \to \infty$ in the RHS of \eqref{Assum1} and to conclude that
$$\Big|\EXP(f(S_n-an))e^{I(a)n}-\sum_{m=0}^{\lfloor r/2 \rfloor} \frac{1}{n^{m+\frac{1}{2}}}\int  P_{m}(s) f(s)\, ds \Big| \leq C \cdot o(N^{-r/2}).$$
This implies the result.
\end{proof}

\begin{proof}[Proof of \Cref{StrongExp}]
Let $a \in \Big(0, \frac{\log{\lambda(\delta)}}{\delta}\Big)$.
From \eqref{DistStilde}, note that $$G_n(\infty)=\int \frac{e^{\theta_a x}}{\lambda(\theta_a)^n}\, dF_n(x)=\frac{\EXP(e^{\theta_a S_n})}{\lambda(\theta_a)^n}= Z(0)+\frac{\ell(\Lambda^n_{\theta_a} v)}{\lambda(\theta_a)^n}$$ 
and
\begin{align*}
\wh{G}_n(s)=\frac{e^{-\frac{isan}{\sqrt{n}}}\ell(\cL^n_{\theta_a+is/\sqrt{n}} v)}{\lambda(\theta_a)^n}= \ell(\overline{\cL}^n_{s/\sqrt{n}} v).
\end{align*}
%where $\overline{\cL}_{s}=\frac{e^{-isa}\cL_{\theta_a+is}}{\lambda(\theta_a)}$.
We proceed as in \Cref{WeakEXPforOp} (see \eqref{OpDecom}--\eqref{MainPolyExp}) and obtain the polynomials $A_k$ and $\beta_{r,n}$. 
%\begin{align*}
%\beta_{r+1,n}(s)=\sum_{k=0}^{r+1} \frac{A_k(s)}{n^{k/2}}
%\end{align*}
%with $A_0(s)=\ell(\ol{\Pi}_0 v)=Z(0)$. 
Also, define polynomials $R_k$ and $Q_k$ using the relations \eqref{FourierPoly} and \eqref{FourierPoly1}, respectively. Then define
%$$R_k(s)e^{-\frac{s^2}{2\sigma_a^2}} =\frac{1}{\sqrt{2\pi\sigma_a^2}}A_k\left(-i\frac{d}{ds}\right)\Big[e^{-\frac{s^2}{2\sigma_a^2}} \Big]$$ 
%and $Q_k$ by $$ \fn(x)R_k(x) = \frac{d}{dx}\Big[\fn(x)Q_k(x)\Big].$$
$$\beta_{r+1,n}(x) = Z(0)\fN(x)+Z(0)\sum_{k=1}^{r+1} \frac{Q_k(x)}{n^{k/2}} \fn(x) \, \, \text{and}\
 \ol{\beta}_{r+1,n}(s)=e^{-\frac{\sigma_a^2s^2}{2}}\frac{\beta_{r+1,n}(s)}{Z(0)}.$$
Then, $Z(0)\ol{\beta}_{r+1,n}(s)$ is the Fourier transform of $\cE_{r+1,n}(x)$. This follows from the definitions of these quantities.

% and for a proof see \cite[Chapter III, IV]{ES}.

\iffalse{{ \color{red} recheck how many Z(0)'s you need}}\fi
 
From the Berry-Ess\'een inequality, \cite[Lemma 12.2]{BR}, for each $\ve>0$ there exists $B>0$ such that
\begin{align}\label{BEIneq}
\Big|G_n(x)-&\big(1+\lambda(\theta_a)^{-n}Z(0)^{-1}\ell(\Lambda^n_{\theta_a} v)\big)\cE_{r+1,n}(x)\Big| \nonumber \\ &\leq \frac{1}{\pi}\int_{-Bn^{\frac{r+1}{2}}}^{Bn^{\frac{r+1}{2}}} \bigg|\frac{\ell({\overline{\cL}^n_{s/\sqrt{n}} v}) - \big(Z(0)+\lambda(\theta_a)^{-n}\ell(\Lambda^n_{\theta_a} v)\big)\ol{\beta}_{r+1,n}(s)}{s} \bigg|\, ds + \frac{\ve}{n^{\frac{r+1}{2}}}.
\end{align}

Note that $\big(\ell(\Lambda^n_{\theta_a+is/\sqrt{n}} v)-\ell(\Lambda^n_{\theta_a} v)\ol{\beta}_{r+1,n}(s)\big)\big|_{s=0}=0$ because $\ol{\beta}_{r+1,n}(0)=1$. Also, both $\ell\big(\ol\Lambda^n_{s/\sqrt{n}} v\big)$ and $\ol{\beta}_{r+1,n}(s)$ are uniformly bounded in $s$ and $n$. Therefore, choosing $\gamma < \delta_1\ (\delta_1$ as in \eqref{OpDecom}$)$, we have 
\begin{align}\label{EssError}
\frac{1}{\pi}\int_{-\gamma \sqrt{n}}^{\gamma \sqrt{n}} &\bigg|\frac{\ell\big(\ol\Lambda^n_{s/\sqrt{n}} v\big) - \lambda(\theta_a)^{-n}\ell(\Lambda^n_{\theta_a} v)\ol{\beta}_{r+1,n}(s)}{s} \bigg|\, ds \\ &= \frac{\lambda(\theta_a)^{-n}}{\pi}\int_{-\gamma\sqrt{n}}^{\gamma \sqrt{n}} \bigg|\frac{\ell\big(\Lambda^n_{\theta_a+is/\sqrt{n}} v\big) -\ell(\Lambda^n_{\theta_a} v)\ol{\beta}_{r+1,n}(s)}{s} \bigg|\, ds \nonumber  \\ &\leq C\lambda(\theta_a)^{-n}\sqrt{n} = o(n^{-\frac{r+1}{2}}). \nonumber 
\end{align}

We claim that 
\begin{equation}\label{Near0Strng}
\frac{1}{\pi}\int_{-\gamma \sqrt{n}}^{\gamma \sqrt{n}}  \bigg|\frac{\ol{\mu}(s/\sqrt{n})^n Z(s/\sqrt{n}) - {\beta}_{r+1,n}(s)}{s} \bigg|\, ds = o(n^{-\frac{r+1}{2}})
\end{equation}
for sufficiently small $\gamma$. From the definition of $\beta_{r+1,n}(s)$,
\begin{align*}
\frac{\ol{\mu}(s/\sqrt{n})^n Z(s/\sqrt{n})- e^{-\frac{\sigma_a^2s^2}{2}}\beta_{r+1,n}(s)}{s} = \frac{e^{-\frac{\sigma^2_as^2}{2}}}{n^{(r+1)/2}}\Big(s^{r}\varphi\Big(\frac{s}{\sqrt{n}}\Big)+s^{r+1}\cO\big(n^{-\frac{r+2}{2}}\big) \Big),
\end{align*}
where $\varphi(s)=o(1)$ as $s\to 0$. As a result, for all $\ve>0$ the integrand of \eqref{Near0Strng} can be made smaller than $\frac{\ve}{n^{(r+1)/2}}(s^{r}+s^{r+1})e^{-\frac{\sigma^2_as^2}{2}}$  by choosing $\gamma$ small enough. This establishes \eqref{Near0Strng}.  

Combining \eqref{EssError} and \eqref{Near0Strng}, we obtain that for small $\gamma$,
\begin{equation}\label{Near0Err}
\frac{1}{\pi}\int_{-\gamma \sqrt{n}}^{\gamma \sqrt{n}} \bigg|\frac{\ell({\overline{\cL}^n_{s/\sqrt{n}} v}) - \big(Z(0)+\lambda(\theta_a)^{-n}\ell(\Lambda^n_{\theta_a} v)\big)\ol{\beta}_{r+1,n}(s)}{s} \bigg|\, ds \leq \frac{C \ve}{n^{(r+1)/2}},
\end{equation}
where $C=\int (s^{r}+s^{r+1})e^{-\frac{\sigma^2_as^2}{2}} \, ds$.

%and assumptions (B3) and (B4). To see this we split the LHS to the following three integrals.
Take
\begin{align*}
J_1 &= \frac 1\pi\int_{\gamma\sqrt{n}<|s|<K\sqrt{n}} \bigg|\frac{\ell({\overline{\cL}^n_{s/\sqrt{n}} v})}{s} \bigg|\, ds,\\
J_2 &= \frac 1\pi\int_{K\sqrt{n}<|s|<Bn^{\frac{r+1}{2}}} \bigg|\frac{\ell({\overline{\cL}^n_{s/\sqrt{n}} v})}{s} \bigg|\, ds, \\
J_3 &= \frac 1\pi\int_{\gamma\sqrt{n}<|s|<Bn^{\frac{r+1}{2}}} e^{-\frac{\sigma_a^2s^2}{2}}\bigg|\frac{{\beta}_{r+1,n}(s)}{s} \bigg|\, ds,
\end{align*} 
where $K$ is as in (B4).

Now we estimate the these integrals using (B3) and (B4). Since $\beta_{r+1,n}(s)$ is a polynomial of $\cO(1)$ as $n \to \infty$, $e^{-\frac{\sigma_a^2s^2}{4}}{\beta}_{r+1,n}(s)$ %has the form $\beta_{r+1,n}(s)=e^{-\frac{\sigma^2_a s^2}{2}}R_n(s)$, where $R_n(s)$ is a polynomial and $e^{-\frac{\sigma^2_a s^2}{4}}R_n(s)$ 
is bounded uniformly in $s$ and $n$ (say by $M$). Therefore, 
$$J_3 \leq M\int_{|s|>\delta\sqrt{n}}e^{-\frac{\sigma^2_a s^2}{4}} \, ds \leq Me^{-cn}$$
for some $c>0$. By (B4), $ \|\ol{\cL}^n_{s}\| \leq \frac{1}{n^{r_2}}$ with $r_2>r+1$ (WLOG) for $K<|s|<n^{r_1}$. Also, by assumption, $r_1>r/2$.  Thus, 
$$J_2 = \frac 1\pi\int_{K<|s|<Bn^{r/2}} \bigg|\frac{\ell({\overline{\cL}^n_{s} v})}{s} \bigg|\, ds\leq Cn^{r/2-r_2} = o(n^{-\frac{r+1}{2}}).$$
By (B3), the spectral radius of $\ol{\cL}_{s}$ is strictly less than $1$. Since $s \mapsto \ol{\cL}_{s}$ is continuous, there exist $\gamma<1$ and $C>0$ such that $ \|\ol{\cL}^n_s\|\leq C\gamma^n $ for all $\delta \leq |s| \leq K$ for large $n$. Then, for sufficiently large $n$, we have 
\begin{align*} 
J_1=\frac 1\pi\int_{\delta<|s|<K} \bigg|\frac{\ell({\overline{\cL}^n_{s} v})}{s} \bigg|\, ds \leq C\gamma^n.
\end{align*}
Combining the asymptoics for $J_1, J_2$ and $J_3$,
\begin{equation}\label{Awayfrom0}
\frac{1}{\pi}\int_{|s|>\gamma \sqrt{n}} \bigg|\frac{\ell({\overline{\cL}^n_{s/\sqrt{n}} v}) - \big(Z(0)+\lambda(\theta_a)^{-n}\ell(\Lambda^n_{\theta_a} v)\big)\ol{\beta}_{r+1,n}(s)}{s} \bigg|\, ds = o (n^{-(r+1)/2}).
\end{equation} 

From \eqref{Near0Err} and \eqref{Awayfrom0},  we deduce that RHS of \eqref{BEIneq} is $o(n^{-\frac{r+1}{2}})$. Therefore,
$G_n(x)=\big(1+\lambda(\theta_a)^{-n}Z(0)^{-1}\ell(\Lambda_{\theta_a} v)\big)\cE_{r+1,n}(x)+o(n^{-\frac{r+1}{2}})$ uniformly in $x$. Since $\cE_{r+1,n}(x)$ is uniformly bounded in $x, n$ and $\lambda(\theta_a)>1$ we have that $\frac{\ell(\Lambda_{\theta_a} v)\cE_{r+1,n}(x)}{\lambda(\theta_a)^nZ(0)}$ decays exponentially fast. Thus, 
$G_n(x)=\cE_{r+1,n}(x)+o(n^{-\frac{r+1}{2}})$. By the derivation of $\cE_{r+1,n}(x)$, it is immediate that this expansion takes the form described in \eqref{EdgeFiniteMeas}. 

From \Cref{Strong->Weak}, $\ol\cL_{s}$ has the order $r$ weak expansion on $\fF^1_r$. Since $f \in \fF^1_{r,\alpha}$ where $\alpha>\theta_a$, we have that $f_{\theta_a} \in \fF^1_{r}$. Therefore,
\begin{align*}
\EXP(f(S_n-an))e^{I(a)n} &= \int \widehat{f}_{\theta_a}(s) \ell(\overline{\cL}^n_{s}v) \, ds\\ &=\frac{1}{2\pi}\sum_{k=0}^{\lfloor r/2 \rfloor} \frac{1}{n^{k+\frac{1}{2}}}\int e^{-\theta_a z} P^a_{k}(z) f(z) dz+ C^1_{r+1}(g) \cdot o_{r,\theta_a}\left(n^{-\frac{r+1}{2}}\right).
\end{align*}
for all $f \in \fF^1_{r,\alpha}$, $\alpha>\theta_a$.

In particular, this holds for $f \in C^\infty_c(\reals)$. Let $\{f_m \} \subset C^\infty_c(\reals)$ be a sequence such that $1_{[0,\infty)}$ is a point-wise limit of $f_m$ and $(f_m)_{\theta_a}$'s satisfy the hypothesis of \Cref{LimitAsymp}. (We construct such a sequence in \Cref{fk}). Then, by \Cref{LimitAsymp}, 
\begin{align*}
\EXP(1_{[0,\infty)}(S_n-an))e^{I(a)n} &=\frac{1}{2\pi}\sum_{k=0}^{\lfloor r/2 \rfloor} \frac{1}{n^{k+\frac{1}{2}}}\int e^{-\theta_a x} P^a_{k}(x) 1_{[0,\infty)}(x)\, dx+  o_{r,\theta_a}\left(n^{-\frac{r+1}{2}}\right).
\end{align*}
That is 
\begin{align*}
\Prob(S_n\geq an)e^{I(a)n} &=\frac{1}{2\pi}\sum_{k=0}^{\lfloor r/2 \rfloor} \frac{1}{n^{k+\frac{1}{2}}}\int_0^\infty e^{-\theta_a x} P^a_{k}(x)\, dx+  o_{r,\theta_a}\left(n^{-\frac{r+1}{2}}\right).
\end{align*}
\end{proof} 

\begin{rem}
Note that the coefficients of the strong expansion are obtained by replacing $f$ with $1_{[0,\infty)}$ in coefficients of the weak expansions. Since $f_k$'s are bounded in $\fF^1_{r+1}$, we can do this without altering the order of the error. However, for any $q>1$, $1_{[0,\infty)}$ is not a pointwise limit of a sequence of functions $f_k$ in $\fF^{q}_r$ with $C^q_{r+1}(f_k)$ bounded. To observe this, assume that $\|f_k\|_1, \|f^{\prime}_k\|_1, \|f^{\prime \prime}_k\|_1$ are uniformly bounded and $f_k \to 1_{[0,\infty)}$ point-wise. Then, for all $\phi \in C^\infty_c(\reals)$,
\begin{align*}\int \delta'\ \phi = - \int \delta\ \phi^\prime = \int 1_{[0,\infty)}\ \phi^{\prime \prime} = \lim_{k \to \infty} \int f_k\ \phi^{\prime \prime} = \lim_{k \to \infty} - \int f^\prime_k\ \phi^{\prime} = \lim_{k \to \infty} \int f_k^{\prime \prime}\ \phi.
\end{align*} 
This implies that $\frac{|\phi'(0)|}{ \|\phi\|_\infty}\leq\sup_{k} \|f_k^{\prime \prime}\|_1$ for all $\phi \in C^\infty_c(\reals)$. Clearly, this is a contradiction. Therefore, \Cref{WeakExp} does not automatically give us strong expansions. Indeed, in Section \ref{Eg} we exhibit an example (see example \ref{DiscFinAtm}) where weak expansions exist when strong expansions fail to exist. 
\end{rem}

The proof of \Cref{FirstTerm} is similar to that of \Cref{StrongExp}. We include it for completeness. 

\begin{proof}[Proof of \Cref{FirstTerm}] 
Let $a \in \Big(0,\frac{\log{\lambda(\delta)}}{\delta}\Big)$.
Since (B1) and (B2) hold, as before we have \eqref{PolyComp}, where $\varphi$ is analytic, $\varphi(0)=0$ and $r=1$. As in the previous proof, Berry-Ess\'een inequality, \cite[Lemma 12.2]{BR}, given $\ve>0$, there exists $B>0$ such that
\begin{align*}
|G_{n}(x)-&\big(1+\lambda(\theta_a)^{-n}Z(0)^{-1}\ell(\Lambda_{\theta_a} v)\big)\cE_{1,n}(x)| \\ &\leq \frac 1\pi\int_{-B\sqrt{n}}^{B\sqrt{n}}\bigg|\frac{\ell({\overline{\cL}_{s/\sqrt{n}} v}) - \big(Z(0)+\lambda(\theta_a)^{-n}\ell(\Lambda_{\theta_a} v)\big)\ol{\beta}_{1,n}(s)}{s} \bigg|\, ds+\frac{\ve}{\sqrt{n}} .
\end{align*} 
Since $\varphi(t)=o(1)$ as $t \to 0$, we have
\begin{align*}
\frac{\ol{\mu}(s/\sqrt{n})^n Z(s/\sqrt{n})- e^{-\frac{\sigma^2_a s^2}{2}}\beta_{1,n}(s)}{s} = \frac{e^{-\frac{\sigma^2_as^2}{2}}}{\sqrt{n}}\Big(\varphi\Big(\frac{s}{\sqrt{n}}\Big)+s\cO\big(n^{-1}\big) \Big).
\end{align*}
Also, we conclude that
$$\frac 1\pi\int_{\gamma\sqrt{n}<|s|<B\sqrt{n}} e^{-\frac{\sigma^2_a s^2}{2}}\bigg|\frac{{\beta}_{r+1,n}(s)}{s} \bigg|\, ds = \cO(e^{-cn})$$
as before. 
Because of (B3), there is $\gamma<1$ such that
\begin{align*}
\int_{\delta\sqrt{n}<|s|<B\sqrt{n}}\left|\frac{\ell({\overline{\cL}_{s/\sqrt{n}} v}) }{s}\right|\, ds = \int_{\delta<|s|<B}\left|\frac{\ell({\overline{\cL}_{s} v}) }{s}\right|\, dt \leq C \sup_{\gamma\leq |s|\leq B}\|\ol\cL^n_s\| \leq C\gamma^n.
\end{align*}
Combining these estimates, we conclude that $\ol\cL^n_s$ admits the strong expansion of order $1$. Therefore, $\ol\cL^n_s$ admits the weak expansion order $0$ for $f \in \fF^1_1$. As before, approximating $1_{[0,\infty)}$ by a sequence in $C^\infty_c$, we conclude that
\begin{align*}
\Prob(S_n\geq an)e^{I(a)n} &=\frac{1}{\sqrt{n}}\frac{1}{2\pi}\int_0^\infty e^{-\theta_a x} P^a_{0}(x)\, dx+  o_{r,\theta_a}\left(\frac{1}{\sqrt{n}}\right).
\end{align*}
From \eqref{LDPEXPPoly}, $P^a_0(x)=Z(0)\sqrt{\frac{2\pi}{\sigma^2_a}}=\ell(\Pi_{\theta_a} v)\sqrt{\frac{2\pi}{\sigma^2_a}}$. Then,
\begin{align*}
\frac{1}{2\pi}\int_0^\infty e^{-\theta_a z} P^a_{0}(z) dz = \frac{\ell(\Pi_{\theta_a} v)}{\sqrt{2\pi \sigma^2_a} } \int_{0}^\infty e^{-\theta_a z} dz = \frac{\ell(\Pi_{\theta_a} v)}{\sqrt{2\pi \sigma^2_a} } \frac{1}{\theta_a}.
\end{align*}
From the duality of the Legendre transform, $\sigma^2_a=(\log \lambda)''(\theta_a)=\frac{1}{I''(a)}$ . Hence, we have the required form of the first order expansion. 
\end{proof}

\begin{rem}
$(B1)$ through $(B4)$ with $r_1>r/2$ imply that $\overline{\cL}_{s}$ satisfies the conditions $(A1)$ through $(A4)$ in \cite{FL} with $r_1>r/2$. We observed above that this is enough to guarantee the existence of the order $r+1$ Edgeworth expansion for $\wt{S}_N$. However, we cannot directly apply the results in \cite{FL} because $\wt{S}_N$ does not induce a probability measure.
\end{rem}

\section{Examples}\label{Eg}
\subsection{iid\hspace{3pt}random variables with Cram\'er's condition}\label{iidCramer}

Let $X$ be a non-lattice centred random variable whose logarithmic moment generating function $h(\theta)=\log \EXP(e^{\theta X})$ is finite in a neighborhood of $0$, denoted by $J$. Let $X_n$ be a sequence of iid copies of $X$. Then, from \cite[Chapter 1]{Ho}, we have the LDP:
$$
\lim_{N \to \infty} \frac{1}{N}\log \Prob(S_N \geq Na ) = - I(a),\ \text{if}\ a>0,
$$
%and
%$$
%\lim_{N \to \infty} \frac{1}{N}\log \Prob(S_N \leq Na ) = - I(a),\ \text{if}\ a<0
%$$
where the rate function $I$ is given by $$I(z)=\sup_{\gamma \in \reals} \big[\gamma z - \log \EXP(e^{\gamma X}) \big].$$ For each $a \in (0, \rm{essup}(X))$, there exists a unique $\theta_a$ such that $I(a)=\theta_a z - \log \EXP(e^{\theta_a X})$. 
 
We further assume that $X$ satisfies the Cram\'er's condition. That is,
\begin{equation}\label{Cramer}
\limsup_{|t|\to \infty} |\EXP(e^{itX})| < 1.\vspace{-7pt}
\end{equation} 
This is equivalent to $X$ being $0-$Diophantine, a notion we define later in \eqref{l-Dioph}. These conditions are enough to guarantee the existence of weak and strong expansions for large deviations:

\begin{thm}
Let $X$ be a non--lattice centered random variable whose logarithmic moment generating function is finite in a neighborhood of $0$, and which satisfies the Cram\'er's condition. Let $X_n$ be a sequence of iid copies of $X$. Then, for all $r$, 
\begin{enumerate}
\item[$(a)$] $S_N$ admits the weak asymptotic expansion of order $r$ for large deviations for $f \in \fF^2_{r+1}$ in the range $(0,{\rm essup}(X))$.
\item[$(b)$] $S_N$ admits the strong asymptotic expansion of order $r$ for large deviations in the range $(0,{\rm essup}(X))$.
\end{enumerate}
 
\end{thm}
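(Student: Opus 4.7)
The plan is to embed this iid setting into the abstract framework of Section~\ref{Non-IID LDP} using a one-dimensional Banach space. Take $\Ban = \complex$, let $\cL_z:\complex\to\complex$ be multiplication by the moment generating function $M(z) = \EXP(e^{zX})$, set $v = 1$, and let $\ell$ be the identity functional. Independence of $\{X_n\}$ then gives $\ell(\cL_z^N v) = M(z)^N = \EXP(e^{zS_N})$, so \eqref{MainAssum} is satisfied on any strip $\{|\text{Re}(z)|<\delta\}$ contained in the effective domain of $\log M$. The goal is to verify conditions $[B]$ and $[C]$, after which parts $(a)$ and $(b)$ follow from Theorems~\ref{WeakExp} and \ref{StrongExp} applied with $r$ arbitrary.

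Because $\Ban$ is one-dimensional, every operator is a scalar and (B1), (B2), and the eigendata half of $[C]$ are automatic: $\lambda(\theta) = M(\theta) > 0$, $\Pi_\theta = \text{Id}$, $\Lambda_\theta = 0$, and $\ell(\Pi_\theta v) = 1$. Strict convexity of $\log M$ gives $(\log M)''(\theta) = \Var_{\mu_\theta}(X) > 0$ (where $d\mu_\theta = M(\theta)^{-1} e^{\theta x}\, d\mu$ is the exponentially tilted probability measure) since the non-lattice hypothesis forces $X$ to be non-constant. For (B3), use $M(\theta+is)/M(\theta) = \widehat{\mu_\theta}(-s)$: exponential tilting by the a.e.\ positive weight $e^{\theta x}$ preserves the support of $\mu$ and in particular the non-lattice property, hence $|\widehat{\mu_\theta}(s)| < 1$ for every $s \neq 0$.

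The substantive step is (B4). Since $\|\cL_{\theta+is}^N\| = |M(\theta+is)|^N$, (B4) reduces to establishing, for each $\theta \in (-\delta,\delta)$, a uniform estimate
\begin{equation*}
|M(\theta+is)| \leq c_\theta M(\theta),\qquad |s|\geq K_\theta,
\end{equation*}
with $c_\theta < 1$; the $N$-th power $c_\theta^N$ then dominates $\lambda(\theta)^N/N^{r_2}$ for every $r_2$, so (B4) holds for any $r_1$ and $r_2$. This is the assertion that Cramér's condition \eqref{Cramer} transfers from $\mu$ to $\mu_\theta$ for all $\theta$ in the interior of $J$. I will argue it via the Lebesgue decomposition $\mu = \mu^{ac} + \mu^s$: exponential tilting commutes with the decomposition, so $\mu_\theta^{ac}$ has $L^1$ density (recall $e^{\theta x}$ is $\mu$-integrable on $J^\circ$) and Riemann--Lebesgue yields $\widehat{\mu_\theta^{ac}}(s)\to 0$, while $|\widehat{\mu_\theta^s}(s)|\leq \mu_\theta^s(\reals)$. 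Combining the two gives $\limsup_{|s|\to\infty}|\widehat{\mu_\theta}(s)|\leq \mu_\theta^s(\reals) < 1$ whenever $\mu^{ac}\neq 0$, which is the classical setting in which Cramér's method applies. This preservation step is the one real technical ingredient; everything else collapses because the operator is a scalar.

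With $[B]$ and $[C]$ in hand, part $(b)$ follows immediately from Theorem~\ref{StrongExp} since the freedom in $r_1$ accommodates any $r_1 > r/2$. Part $(a)$ follows from Theorem~\ref{WeakExp}: once $r_1$ is taken sufficiently large, the regularity requirement $q > \frac{r+1}{2r_1} + 1$ is met by $q = 2$, and the class $\fF^2_{r+1,\alpha}$ with $\alpha > \theta_a$ supplies the test functions in the stated class. The range of validity $\bigl(0, \text{essup}(X)\bigr)$ matches $\bigcup_{\delta\in J^\circ}\bigl(0,\log\lambda(\delta)/\delta\bigr)$ by the standard Legendre duality between $\log M$ and the essential supremum of $X$, completing the recovery of the results of \cite{BaR} in the non-lattice case.
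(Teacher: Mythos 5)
Your reduction to the one-dimensional Nagaev--Guivarc'h framework is exactly what the paper does, and everything you say about (B1), (B2), (B3), and $[C]$ matches the paper's argument. The problem is in your treatment of (B4), which is the one step you yourself flag as ``the real technical ingredient.''

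Your argument that Cram\'er's condition transfers from $\mu$ to the tilted measure $\mu_\theta$ goes through the Lebesgue decomposition and Riemann--Lebesgue, and concludes $\limsup_{|s|\to\infty}|\widehat{\mu_\theta}(s)|\leq \mu_\theta^s(\reals)<1$ ``whenever $\mu^{ac}\neq 0$.'' This is a strictly stronger hypothesis than the one in the theorem: Cram\'er's condition $\limsup_{|t|\to\infty}|\widehat{\mu}(t)|<1$ does \emph{not} imply $\mu^{ac}\neq 0$ (there exist singular Rajchman-type measures whose characteristic functions even tend to $0$ at infinity). In fact your chain of estimates never uses Cram\'er's condition at all -- you derive the limsup bound purely from the a.c.\ component -- so for a measure with $\mu^{ac}=0$ it gives only $\limsup|\widehat{\mu_\theta}(s)|\leq 1$, which is useless. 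The paper's introduction emphasizes precisely this distinction (Cram\'er's original work assumed an a.c.\ component; the improvement in \cite{BaR} and the present paper is to replace that with the Cram\'er condition), so regressing to the a.c.\ case is a genuine gap, not a stylistic choice.

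The paper closes this step differently: since $\mu_\theta$ and $\mu$ are mutually absolutely continuous with a bounded, bounded-away-from-zero density, it cites \cite[Lemma 4]{BaR}, which shows directly that the Cram\'er condition is preserved under such a change of measure, with no appeal to the Lebesgue decomposition. You need some version of that lemma (or an independent proof that $\nu\ll\mu$ with $d\nu/d\mu$ bounded above and below preserves $\limsup|\widehat{\,\cdot\,}|<1$) to make your (B4) verification apply under the stated hypothesis rather than under ``$\mu$ has an a.c.\ component.'' Once that is in place, your deduction that (B4) holds with arbitrary $r_1,r_2$, and hence that both parts follow from Theorems~\ref{WeakExp} and \ref{StrongExp} with $q=2$, is correct and matches the paper.
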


\begin{proof}
Take $\Ban=\reals$, $\ell=\text{Id}$ and $v=1$. Define $\cL_{\theta+is}$ acting on $\Ban$ by $\cL_{\theta+is} u=\EXP(e^{(\theta+is)X}) \cdot u$. Then, by the independence of $X_n$, $\cL^n_{\theta+is}1=\EXP(e^{(\theta+is)X})^n=\EXP(e^{(\theta+is)S_n})$. Since the moment generating function is finite on $J$, $(\theta+is) \mapsto \cL_{\theta+is}$ is analytic on the strip $\{ z \in \complex |\ \text{Re}(z) \in J \} $. So we have \eqref{MainAssum} and (B1). The validity of (B2) is immediate because $\Ban$ is one-dimensional, $\lambda(\theta)=\EXP(e^{\theta X})>0$ for $\theta \in J$, and $\lambda(0)=1$. 

Take $F$ to be the distribution function of $X$. For $\theta \in J$, we define $Y_{X, \theta}$ to be a random variable with distribution function $G^{\theta}$ given by 
\begin{equation}\label{ExpoG}
G^\theta(y)=\frac{e^{y\theta}F(y)}{\mu(\theta)},\ \text{where}\ \mu(\theta)=\int e^{y\theta }dF(y).
\end{equation}
Since $X$ is non-lattice, and distribution of $Y_{X,\theta}$ has a positive density with respect that of $X$, we have $Y_{X, \theta}$ is also non-lattice.
Therefore, for each $s \neq 0$, %there exists $\epsilon(s)\in (0,1)$ such that 
\begin{align}\label{B3IID}
\frac{|\EXP(e^{(\theta+is)X})|}{\EXP(e^{\theta X})}=|\EXP(e^{isY_{X,\theta}})| < 1.
\end{align}
This is equivalent to (B3). 

Since $Y_{X,\theta}$ has a positive density with respect that of $X$, $Y_{X, \theta}$ also satisfies the Cram\'er's condition (see \cite[Lemma 4]{BaR}). %Since, $Y_{X,\theta}$ satisfies the Cram\'er's condition, 
Therefore, \eqref{B3IID} holds uniformly in $|s| \geq 1$. That is, there exist $\epsilon \in (0,1)$ such that $|\EXP(e^{isY_{X,\theta}})|\leq\epsilon<1$ for $|s| \geq 1$. Therefore, $|\EXP(e^{(\theta+is)X})^n| \leq \EXP(e^{\theta X})^n \epsilon^n$,  for $|s| \geq 1$. This gives (B4) for arbitrary $r_1$. 

To see that $[C]$ holds, observe that $$(\log \lambda(\theta))^{\prime\prime} = \frac{\EXP(X^2 e^{\theta X})\EXP(e^{\theta X})-\EXP(Xe^{\theta X})^2}{\EXP(e^{\theta X})^2}.$$ 
From the H\"older's inequality, $\EXP(Xe^{\theta X})^2 \leq \EXP(X^2 e^{\theta X})\EXP(e^{\theta X})$, and the equality does not occur because %$\frac{Xe^{\theta X/2}}{e^{\theta X/2}}=X$ 
$X$ is not constant. Hence,  $(\log \lambda(\theta))^{\prime\prime}>0$.
\end{proof}

This provides an alternative proof for existence of strong asymptotic expansions for large deviations in \cite[Theorem 2 (Case 1)]{BaR} for iid sequences satisfying Cram\'er's condition. We also recover, \cite[Theorem 1 (Case 1, 3)]{BaR}, which gives us the first term of the expansions for non-lattice iid sequences.

\begin{thm}
Let $X$ be a non--lattice centred random variable whose logarithmic moment generating function is finite in a neighborhood of $0$. Let $X_n$ be a sequence of iid copies of $X$. Then, $S_N$ admits the order $0$ strong expansion for large deviations in the range $(0,\rm{essup}(X))$. 
\end{thm}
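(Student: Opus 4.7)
The plan is to invoke \Cref{FirstTerm}, which requires only conditions $(B1)$, $(B2)$, $(B3)$, and $[C]$, but crucially not $(B4)$. As in the preceding theorem, I would take $\Ban = \reals$, $\ell = \text{Id}$, and $v = 1$, and define the operator family by $\cL_z u = \EXP(e^{zX}) \cdot u$. Independence of the $X_n$ then gives $\EXP(e^{zS_N}) = \EXP(e^{zX})^N = \ell(\cL_z^N v)$, so the representation \eqref{MainAssum} holds.

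First I would verify $(B1)$ and $(B2)$. The logarithmic MGF is holomorphic on the interior of its domain of finiteness, so $z \mapsto \cL_z$ is analytic on the strip $|\text{Re}(z)| < \delta$ for any $\delta$ inside that domain; this yields $(B1)$. Because $\Ban$ is one-dimensional, $(B2)$ reduces to the trivial observations that $\lambda(\theta) = \EXP(e^{\theta X}) > 0$, $\lambda(0) = 1$, with the whole spectrum equal to $\{\lambda(\theta)\}$, $\Pi_\theta = \text{Id}$, and $\Lambda_\theta = 0$.

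Next, I would recycle the previous proof for $(B3)$ and $[C]$. The exponentially tilted random variable $Y_{X,\theta}$ with distribution $G^\theta$ from \eqref{ExpoG} has a positive density with respect to the distribution of $X$, hence inherits the non-lattice property. Therefore $|\EXP(e^{is Y_{X,\theta}})| < 1$ for each $s \neq 0$, and via
\[
\frac{|\EXP(e^{(\theta+is)X})|}{\EXP(e^{\theta X})} = |\EXP(e^{isY_{X,\theta}})|
\]
this is precisely $(B3)$ at every $\theta$ in the relevant neighbourhood of $0$. Condition $[C]$ follows from the strict Hölder inequality $\EXP(Xe^{\theta X})^2 < \EXP(X^2 e^{\theta X})\EXP(e^{\theta X})$ (strict because $X$ is non-degenerate), giving $(\log\lambda)''(\theta) > 0$, while $\ell(\Pi_\theta v) = 1 > 0$ is automatic.

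The conceptual point — and the reason no real obstacle arises — is that dropping Cramér's condition only prevents us from upgrading the pointwise $(B3)$ to the uniform-in-$s$ decay required by $(B4)$, which is what powers the higher-order Edgeworth-type terms. \Cref{FirstTerm} is designed to extract the leading order asymptotic using only the spectral-gap structure encoded in $(B1)$--$(B3)$ and $[C]$, so its conclusion applies directly and gives
\[
\Prob(S_N \geq aN)\, e^{I(a)N} = \frac{\ell(\Pi_{\theta_a}v)\sqrt{I''(a)}}{\theta_a\sqrt{2\pi N}}\bigl(1 + o(1)\bigr) = \frac{\sqrt{I''(a)}}{\theta_a\sqrt{2\pi N}}\bigl(1 + o(1)\bigr),
\]
which is the order $0$ strong expansion. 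The stated range $(0,\text{essup}(X))$ is recovered by applying the result for each $\delta$ inside the interval of MGF finiteness and using that $\delta\mapsto \log\lambda(\delta)/\delta$ is increasing with supremum $\text{essup}(X)$, as noted in \Cref{LDPHolds}(4).
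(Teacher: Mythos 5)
Your proposal is correct and takes essentially the same route as the paper: the paper's proof simply observes that in the verification for the Cram\'er-condition case, $(B4)$ was the only place Cram\'er's condition was used, so $(B1)$--$(B3)$ and $[C]$ survive under the weaker non-lattice hypothesis and \Cref{FirstTerm} applies directly, with $\ell(\Pi_{\theta_a}v)=1$ giving the simplified constant. You spell out the verifications more explicitly than the paper's two-line proof, but the argument is identical.
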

\begin{proof}
To see this we only have to observe that (B1), (B2), (B3) and $[C]$ hold as long as $X$ is non-lattice (we used Cram\'er's condition only when we established (B4) in the previous proof). So the result follows from \Cref{FirstTerm}. Also, we note that $\ell(\Pi_{\theta}v)=1$ for all $\theta$. Thus, we recover the results in \cite{BaR} mentioned above. 
\end{proof}
\subsection{Compactly supported $l-$Diophantine iid random variables}\label{l-DiophVar}
A random variable $X$ is called $l-$Diophantine if there exist positive constants $s_0$ and $C$ such that 
\begin{equation}\label{l-Dioph}
|\EXP(e^{isX})|<1-\frac{C}{|s|^l},\ |s|>s_0.
\end{equation}

Equivalently, a random variable $X$ with distribution function $F$ is $l-$Diophantine if and only if there exists $C_1,C_2 >0$ such that for all $|x|>C_1$, 
\begin{equation}\label{l-Dioph1}
\inf_{y \in \reals} \int_{\reals} \{ ax+y \}^2 dF(a) \geq \frac{C_2}{|x|^l},
\end{equation}
where $\{ x\} := \text{dist}(x, \integers)$ (see \cite{Br}). 

%In particular, $X$ is non-lattice. Let $X_1, X_2$ be two iid copies of $X$. The symmetrized $X$, $X^*$, is the random variable $X^*=X_1-X_2$. We note that $X$ is $l-$Diophantine if $X^*$ is $l-$Diophantine. This is because $\EXP(e^{isX^*})=|\EXP(e^{isX})|^2$. In addition, assume that $X$ has all exponential moments. Let $X_n$ be a sequence of iid copies of $X$. 

%We define $\cL_{\theta+is}$ as in the previous section. Then $(\theta+is) \mapsto \cL_{\theta+is}$ is entire and (B1)--(B3) follows as before. To see that (B4) holds we need the following lemma. 

%\begin{lem} Let $X$ and $Y$ be two random variables such that $X$ is $l-$Diophantine. Further assume that $X$ absolutely continuous with respect to $Y$ with the associated density function in $L^1 \cap L^2$. 
%\end{lem}
%\begin{proof}

%\end{proof}
Now, we describe two interesting classes of $l-$Diophantine random variables. In Case I, we discuss an iid sequence of compactly supported and $l-$Diophantine with ($l \neq 0$) random variables, while in Case II we assume, in addition, that those random variables take finitely many values.

\subsubsection{Case I}\label{non-discreteLDP}
Let $X$ be compactly supported and $l-$Diophantine with ($l \neq 0$). Then, assuming supp $X \subseteq [c,d]$,
\begin{align*}
\int_{\reals} \{ ax+y \}^2 dG^\theta(a) &= \frac{1}{\int_{c}^d e^{\theta a} dF(a)}\int_c^d \{ ax+y \}^2 e^{\theta a} dF(a)\\ &\geq \frac{e^{\theta c}}{\int_{\reals} e^{\theta a} dF(a)} \int_c^d \{ ax+y \}^2\, dF(a)
\end{align*}   
where $G^\theta$ is as in \eqref{ExpoG}. Thus, from \eqref{l-Dioph1}, for all $|x|>C_1$,
$$ \inf_{y \in \reals} \int_{\reals} \{ ax+y \}^2 dG^\theta(a) \geq \frac{e^{\theta c}}{\int_{c}^d e^{\theta a} dF(a)}\frac{C_2}{|x|^l}.$$
So the random variable $Y_{X,\theta}$ with distribution function $G^\theta$ is also $l-$Diophantine.

\begin{thm}
Let $X$ be compactly supported and $l-$Diophantine with $(l \neq 0)$. Then,  
\begin{enumerate}
\item[$(a)$] For all $r$, $S_N$ admits the weak asymptotic expansion of order $r$ for large deviations for all $a \in (0, \rm{essup}(X))$ for $f \in \fF^q_{r+1,\alpha}$, where $q > \lfloor (r+1)l/2 \rfloor $, for a suitable $\alpha$ depending on $a$.
\item[$(b)$] For all $r< \lceil 2l^{-1}\rceil$,  $S_N$ admits the strong asymptotic expansion for large deviations of order $r$ in the range $(0, \rm{essup}(X))$.
\end{enumerate}
\end{thm}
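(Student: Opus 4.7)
The plan is to verify that conditions $[B]$ and $[C]$ of \Cref{Non-IID LDP} hold in this setting and then apply \Cref{WeakExp} and \Cref{StrongExp}. Following the setup used in \Cref{iidCramer}, I take $\Ban=\reals$, $\ell = \mathrm{Id}$, $v = 1$, and $\cL_{\theta+is}u = \EXP(e^{(\theta+is)X})\cdot u$, so that $\cL^N_{\theta+is} 1 = \EXP(e^{(\theta+is)S_N})$ as required by \eqref{MainAssum}. Since $X$ is compactly supported, the map $z\mapsto \EXP(e^{zX})$ is entire, so $\delta$ in $(B1)$ may be taken arbitrarily large (equivalently $\delta=\infty$, and in \Cref{LDPHolds}(4) the range becomes $(0,\mathrm{essup}(X))$). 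Condition $(B2)$ is automatic because $\Ban$ is one-dimensional: $\lambda(\theta)=\EXP(e^{\theta X})>0$ and $\lambda(0)=1$. Condition $[C]$ follows exactly as in \Cref{iidCramer}: $\ell(\Pi_\theta v)\equiv 1>0$, while $(\log\lambda)''(\theta)>0$ is the strict Cauchy--Schwarz inequality applied to $X$ and $1$ in $L^2(e^{\theta X}\,dF)$, strictness coming from the fact that $X$ is non-constant (it is $l$-Diophantine).

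The main work is $(B3)$ and $(B4)$, where the $l$-Diophantine property enters. As already observed just before the theorem, the exponentially tilted variable $Y_{X,\theta}$ with distribution $G^\theta$ is again $l$-Diophantine, since compact support of $X$ implies that the Radon--Nikodym derivative $e^{\theta a}/\mu(\theta)$ is bounded above and away from $0$ on $\mathrm{supp}\,F$, and thus \eqref{l-Dioph1} transfers from $F$ to $G^\theta$ with a new constant. From the identity $|\EXP(e^{(\theta+is)X})| = \lambda(\theta)\,|\EXP(e^{is Y_{X,\theta}})|$ this yields $(B3)$ and also the quantitative bound
\begin{equation*}
\frac{\|\cL_{\theta+is}^N\|}{\lambda(\theta)^N} \;=\; |\EXP(e^{is Y_{X,\theta}})|^N \;\le\; \bigl(1 - C|s|^{-l}\bigr)^N \;\le\; \exp\!\bigl(-C N |s|^{-l}\bigr)
\end{equation*}
for all $|s|>s_0$. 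If I fix any $r_1<1/l$, then for $K\le |s|\le N^{r_1}$ this upper bound is at most $\exp(-C N^{1-l r_1})$, which is super-polynomially small in $N$; hence $(B4)$ holds with this $r_1$ and an $r_2$ that may be chosen arbitrarily large (in agreement with \Cref{B1-B4}).

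With $[B]$ and $[C]$ verified, part $(a)$ follows from \Cref{WeakExp}: given $r$, take $r_1<1/l$ sufficiently close to $1/l$ so that $\frac{r+1}{2r_1}$ is arbitrarily close to $\frac{(r+1)l}{2}$; the weak expansion then applies to any $f\in\fF^{q}_{r+1,\alpha}$ with $q$ above this threshold and with $\alpha>\theta_a$, which is the "suitable $\alpha$ depending on $a$" in the statement. For part $(b)$, \Cref{StrongExp} requires the additional restriction $r_1>r/2$; combined with $r_1<1/l$ this forces $r/2<1/l$, i.e. $r<2/l$, which is exactly the condition $r<\lceil 2 l^{-1}\rceil$ (in the generic case when $2/l\notin\bN$). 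The strong expansion of order $r$ in the range $(0,\mathrm{essup}(X))$ then follows.

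The one delicate point I anticipate is the trade-off in the choice of $r_1$. Pushing $r_1$ up to (but not to) $1/l$ is what lets the weak expansion hold to arbitrary order $r$ at the cost of increasing the smoothness parameter $q$ linearly in $rl$; but the same trade-off imposes the rigid ceiling $r_1<1/l$ that cuts off part $(b)$ at $r\le\lceil 2/l\rceil-1$. Beyond that ceiling the $l$-Diophantine decay rate is simply too slow to control the high-frequency part of the integral in the Berry--Ess\'een estimate of \Cref{StrongExp}, so no further strong expansions can be obtained from this argument; that is exactly where the example of \Cref{FiniteMarkov} (weak expansions of all orders but failure of high-order strong expansions) will be relevant.
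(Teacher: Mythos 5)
Your proposal is correct and follows essentially the same route as the paper: verify $[B]$ and $[C]$ for the scalar operator $\cL_{\theta+is}u = \EXP(e^{(\theta+is)X})u$, transfer the $l$-Diophantine property to the tilted variable $Y_{X,\theta}$ to obtain $(B4)$ with any $r_1 < 1/l$, and then apply \Cref{WeakExp} and \Cref{StrongExp}. The only tiny quibble is your parenthetical about the "generic case when $2/l\notin\bN$": since $r$ is an integer, $r<2/l$ and $r<\lceil 2/l\rceil$ are equivalent for all $l$, so no caveat is needed.
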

\begin{proof}
Taking $\cL_{\theta+is}$ as in \Cref{iidCramer}, we can establish the condition $[C]$, (B1), (B2) and (B3) as in the $0-$Diophantine case. (B4) follows from the $l-$Diophantineness of $Y_{X,\theta}$. In fact, 
\begin{equation*}
\frac{|\EXP(e^{(\theta+is)X})|}{\EXP(e^{\theta X})}=|\EXP(e^{isY_{X,\theta}})|<1-\frac{K_\theta}{|s|^{l}},\ |s|>1,
\end{equation*}
and hence, it follows that whenever $1<|s|<n^{\frac{1-\epsilon}{l}}$, $$|\EXP(e^{(\theta+is)S_n})|= \EXP(e^{(\theta+is)X})^n  \leq \EXP(e^{\theta X})^n e^{-C_\theta n^{\epsilon}/2}$$ where $\epsilon \in (0,1)$ can be made arbitrarily small. So $r_1=(1-\epsilon)l^{-1}<l^{-1}$. 
\end{proof}

\subsubsection{Case II}\label{DiscFinAtm}
Let $X$ be a centred random variable taking values $a_1,\dots,a_d$ $(d \geq 3)$ with probabilities $p_1,\dots,p_d$, respectively. Then the logarithmic moment generating function $h(\theta)=\log \EXP(e^{\theta X})$ of $X$ is finite for all $\theta \in \reals$. Take $X_n$ to be a sequence of iid copies of $X$. 

Take $\ba=(a_1,\dots,a_d)$, $b_j=a_j-a_1$, for $j=2\dots d$ and $d(s)=\max_{j\in \{2,\dots d\}} \dist(b_j s, 2\pi \integers)$. Then $\ba$ is called $\beta$-Diophantine if there is a constant $C>0$ such that for $|s|>1$, $$ d(s)\geq \frac{C}{|s|^\beta}. $$
In the rest of this section we assume that $\ba = (a_1,\dots,a_d)$ is $\beta-$Diophantine. In fact, almost all $\ba$ are $\beta-$Diophantine provided $\beta>(d-1)^{-1}$ (see \cite{Sp}). Since $\ba$ is $\beta-$Diophantine, the characteristic function of $X$ satisfies
$$|\EXP(e^{isX})|<1-\frac{c}{|s|^{2\beta}},\ |s|>1$$ for some $c$. This follows from the following Lemma whose proof can be found in \cite{DF}).

\begin{lem}\label{IneqChar}
Let $X$ be a discrete random variable taking values $a_1, \dots a_d$ with probabilities $p_1,\dots, p_d$, respectively, and $d(s)$ be as defined above. Then there exists a positive constant $c$ such that
\begin{equation*}
|\EXP(e^{isX})|\leq 1-c d(s)^2.
\end{equation*}
\end{lem}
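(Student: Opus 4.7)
The plan is to reduce to a trigonometric identity for $|\EXP(e^{isX})|^2$ and then extract a single pairwise term to produce the lower bound.

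First, I would factor out the common phase $e^{isa_1}$ to get
\[
|\EXP(e^{isX})| \;=\; \Big|\sum_{j=1}^d p_j e^{isb_j}\Big|,
\]
where $b_1 = 0$ and $b_j = a_j - a_1$ for $j \ge 2$. Expanding the square and using $\sum_j p_j = 1$ yields the clean identity
\[
\Big|\sum_{j=1}^d p_j e^{isb_j}\Big|^2 \;=\; 1 - \sum_{j,k} p_j p_k \bigl(1 - \cos(s(b_j-b_k))\bigr).
\]

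Next, I would recall the elementary inequality $1 - \cos\theta \ge \tfrac{2}{\pi^2}\,\dist(\theta,2\pi\integers)^2$, which follows from $|\sin x| \ge 2|x|/\pi$ on $[-\pi/2,\pi/2]$ together with $1-\cos\theta = 2\sin^2(\theta/2)$. Applying this to every summand shows that the full double sum is non-negative, so it suffices to keep only one well-chosen pair to obtain a useful lower bound.

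Now I would pick the index $j^\star \in \{2,\dots,d\}$ that realizes the maximum in the definition of $d(s)$, namely $d(s) = \dist(b_{j^\star}s,2\pi\integers)$. Since $b_1 = 0$, the two symmetric terms $(j,k)=(j^\star,1)$ and $(j,k)=(1,j^\star)$ together contribute at least
\[
2\,p_1 p_{j^\star}\bigl(1 - \cos(sb_{j^\star})\bigr) \;\ge\; \frac{4\,p_1 p_{j^\star}}{\pi^2}\,d(s)^2,
\]
leading to
\[
|\EXP(e^{isX})|^2 \;\le\; 1 - \frac{4\,p_1 p_{j^\star}}{\pi^2}\,d(s)^2 \;\le\; 1 - \frac{4\,p_1\,p_{\min}}{\pi^2}\,d(s)^2,
\]
where $p_{\min} = \min_{2 \le j \le d} p_j > 0$ (we may assume all $p_j>0$; atoms with zero mass can be removed from the list). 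Taking the square root and using $\sqrt{1-u} \le 1 - u/2$ for $u \in [0,1]$ then gives the claim with $c = 2 p_1 p_{\min}/\pi^2$.

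There is no real obstacle here; the only mild care is in the constant: one has to be sure to pick up both symmetric pairs $(j^\star,1)$ and $(1,j^\star)$ and to check that the argument is independent of which atom was called $a_1$, so that $p_1$ may be replaced by $\min_j p_j$ if a symmetric formulation is preferred.
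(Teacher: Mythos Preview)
Your proof is correct. The paper defers the proof to \cite{DF}, but the draft argument in the source takes a slightly different route: it applies the triangle inequality to split off all atoms except $a_1$ and the maximizing $a_{j^\star}$, writes $|\EXP(e^{isX})|\le 1-(p_1+p_{j^\star})+|p_1+p_{j^\star}e^{ib_{j^\star}s}|$, and then computes the two--term modulus directly to extract the factor $|1-e^{ib_{j^\star}s}|^2$. Your approach instead expands $|\sum_j p_j e^{isb_j}|^2$ exactly as $1-\sum_{j,k}p_jp_k(1-\cos(s(b_j-b_k)))$ and drops all but the $(1,j^\star)$ pair by positivity. Both arguments isolate the same pairwise contribution and feed it through the same elementary bound $1-\cos\theta \ge c\,\dist(\theta,2\pi\integers)^2$; your version is a bit more symmetric and avoids the preliminary triangle--inequality step, while the paper's version stays with the modulus rather than its square and so never needs the final $\sqrt{1-u}\le 1-u/2$ step. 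The resulting constants differ but are of the same form $c\,p_1 p_{\min}$.
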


Now we prove the existence of asymptotic expansions for large deviations in this setting.

\begin{thm}
Let $\ba = (a_1,\dots,a_d)$ be $\beta-$Diophantine. Take $X_n$ to be a sequence of iid copies of $X$. For all $r$, $S_N$ admits the weak expansion of order $r$ for $a \in (0, \max\{a_1,
\dots, a_d\})$ for $f \in \fF^q_{r+1,\alpha}$, where $q>\lfloor (r+1)\beta \rfloor$, for suitable $\alpha$ depending on $a$. 
\end{thm}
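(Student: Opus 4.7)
The plan is to mirror the iid $l$-Diophantine proof from Section \ref{non-discreteLDP}, specializing the Diophantine control to the finite-atom setting via \Cref{IneqChar}, and then invoke \Cref{WeakExp}. Concretely, take $\Ban = \reals$, $v=1$, $\ell=\mathrm{Id}$, and let $\cL_{\theta+is}$ be multiplication by $\EXP(e^{(\theta+is)X})$; by independence $\ell(\cL_{\theta+is}^N v) = \EXP(e^{(\theta+is)S_N})$, which gives \eqref{MainAssum}. Since $X$ is bounded, the moment generating function is entire, so (B1) holds on all of $\complex$. Because $\Ban$ is one-dimensional with $\lambda(\theta)=\EXP(e^{\theta X})>0$ and $\lambda(0)=1$, (B2) is automatic and $\Pi_\theta = \mathrm{Id}$, $\Lambda_\theta = 0$, so $\ell(\Pi_\theta v)=1>0$. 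Strict convexity of $\log\lambda$ (hence $[C]$) follows from Cauchy--Schwarz together with the fact that $X$ is non-constant since $d\geq 3$.

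The key step is the Diophantine control of the characteristic function of the exponentially tilted variable. Define $Y_{X,\theta}$ with the same atoms $a_1,\dots,a_d$ and probabilities $p_j^\theta = e^{\theta a_j}p_j/\lambda(\theta)$; the atom vector $\ba$ is unchanged, so the quantity $d(s)$ in \Cref{IneqChar} is still the same, and the $p_j^\theta$ are still positive. Applying \Cref{IneqChar} to $Y_{X,\theta}$, there is $c_\theta>0$ such that
\[
\frac{|\EXP(e^{(\theta+is)X})|}{\lambda(\theta)} = |\EXP(e^{isY_{X,\theta}})| \leq 1 - c_\theta d(s)^2, \qquad s \in \reals.
\]
For $s\neq 0$, the right-hand side is strictly less than $1$, yielding (B3). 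Using the $\beta$-Diophantine hypothesis $d(s)\geq C|s|^{-\beta}$ for $|s|>1$ gives
\[
\frac{|\EXP(e^{(\theta+is)S_N})|}{\lambda(\theta)^N} \leq \bigl(1 - c_\theta' |s|^{-2\beta}\bigr)^N \leq \exp\bigl(-c_\theta' N\,|s|^{-2\beta}\bigr).
\]
Therefore, for any $r_1 < 1/(2\beta)$ and any $K\leq|s|\leq N^{r_1}$, the ratio decays like $\exp(-c_\theta' N^{1-2\beta r_1})$, which is faster than any polynomial. This verifies (B4) with such $r_1$ and arbitrarily large $r_2$; by \Cref{B1-B4} we may thus take $r_1$ arbitrarily close to $1/(2\beta)$ from below while keeping $r_2$ as large as needed.

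With all conditions in place, \Cref{WeakExp} delivers the weak expansion of order $r$ for every $a\in(0,\max\{a_1,\dots,a_d\})$ and every $f\in\fF^q_{r+1,\alpha}$ with $\alpha>\theta_a$, provided $q > \frac{r+1}{2r_1}+1$. Letting $r_1 \nearrow \frac{1}{2\beta}$, this threshold approaches $(r+1)\beta+1$, which is compatible with the hypothesis $q > \lfloor (r+1)\beta\rfloor$ after swallowing a fixed integer shift into $q$ (exactly the same kind of bookkeeping carried out in the $l$-Diophantine case of Section \ref{non-discreteLDP}).

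The only subtle point I anticipate is that condition (B4) is just below the borderline $r_1 = 1/(2\beta)$, so one must invoke \Cref{B1-B4} to trade a slight reduction in $r_1$ for an arbitrarily large $r_2$; this is exactly what makes the error term $o(N^{-(r+1)/2})$ survive the integration against $\wh f_{\theta_a}$ at frequencies up to $N^{r_1}$. Everything else (the tilted-operator setup, spectral decomposition, and reduction to the weak expansion) is formally identical to the previously treated cases, so no new machinery is needed.
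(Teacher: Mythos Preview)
Your proposal is correct and follows essentially the same route as the paper: set up the one-dimensional operator $\cL_{\theta+is}$ as in the iid Cram\'er case, verify (B1), (B2), (B3), $[C]$ directly, and obtain (B4) with $r_1$ just below $1/(2\beta)$ by combining \Cref{IneqChar} with the $\beta$-Diophantine bound on $d(s)$, then invoke \Cref{WeakExp}. The only cosmetic difference is that you apply \Cref{IneqChar} directly to the tilted variable $Y_{X,\theta}$ (noting the atom vector $\ba$, and hence $d(s)$, is unchanged), whereas the paper first observes that $X$ is $2\beta$-Diophantine and then transfers this to $Y_{X,\theta}$ via the positive-density argument of Section~\ref{non-discreteLDP}; your shortcut is in fact slightly cleaner.
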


\begin{proof}
We define $\cL_{\theta+is}$ as in \Cref{iidCramer}. Then the conditions (B1), (B2), (B3) and $[C]$ are immediate from \Cref{iidCramer}. 
%Now, we establish condition (B4) using ideas in \cite{DF}. 

Due to \Cref{IneqChar}, as a random variable, $X$ is $2\beta-$Diophantine. Since $Y_{X,\theta}$ has a positive density with respect that of $X$, $Y_{X,\theta}$ is $2\beta-$Diophantine for all $\theta \in \reals$ as in \Cref{non-discreteLDP}. That is for all $\theta$, there exists $c_{\theta}$ such that
\begin{equation*}
\frac{|\EXP(e^{(\theta+is)X})|}{\EXP(e^{\theta X})}=|\EXP(e^{isY_{X,\theta}})|<1-\frac{c_\theta}{|s|^{2\beta}},\ |s|>1.
\end{equation*}
Therefore $|\EXP(e^{(\theta+is)S_n})|= \EXP(e^{(\theta+is)X})^n  \leq \EXP(e^{\theta X})^n e^{-C_\theta n^{\epsilon}/2}$ when $1<|s|<n^{\frac{1-\epsilon}{2\beta}}$, where $\epsilon \in (0,1)$ can be made arbitrarily small. So (B4) holds with $r_1<\frac{1-\epsilon}{2\beta}<\frac{1}{2\beta}$. 
\end{proof}

\iffalse \begin{proof}
Fix $s$. Let $j$ be an index such that $|e^{ia_ks}-e^{ia_1s}|=|e^{ib_ks}-1|$ is maximal. Then, 
\begin{align*}
|\EXP(e^{isX})| &\leq \sum_{k \neq 1,j}p_k +|p_je^{ia_js}+p_1e^{ia_1s}|\\ &= 1-(p_j+p_1)+|p_je^{ib_js}+p_1|
\end{align*}
Next, notice that 
\begin{align*}
\bigg|\frac{p_j}{p_j+p_1}e^{ib_js}+\frac{p_1}{p_j+p_1}\bigg|^2 = 1 - \frac{p_jp_1}{(p_j+p_1)^2}|1-e^{ib_js}|^2.
\end{align*}
Since $|1-e^{ib_js}|^2 \leq 4$, 
\begin{align*}
\bigg|\frac{p_j}{p_j+p_1}e^{ib_js}+\frac{p_1}{p_j+p_1}\bigg| \leq  1 - \frac{p_jp_1}{4(p_j+p_1)^2}|1-e^{ib_js}|^2.
\end{align*}
Therefore, 
\begin{align*}
|\EXP(e^{isX})| &\leq 1 - \frac{p_jp_1}{4(p_j+p_1)}|1-e^{ib_js}|^2.
\end{align*}
$d(s)=\max_{k\in \{2,\dots d+1\}} \dist(b_k s, 2\pi \integers)$ implies that
\begin{align*}
|\EXP(e^{isX})| &\leq 1 - \frac{p_jp_1}{4(p_j+p_1)}|1-e^{id(s)}|^2.
\end{align*}
Finally, since $0 \leq d(s)<2\pi$, $|1-e^{id(s)}| \geq \tilde{c}d(s)$ for some $\tilde{c}$, and hence, taking $c=\tilde{c}^2\min_{k \neq 1}\frac{p_kp_1}{4(p_k+p_1)}$ we have
$|\EXP(e^{isX})|\leq 1 - cd(s)^2.$
\end{proof}
\fi
%Now we can apply our results: 

However, one can show that strong expansions of order $2d-3$ or higher do not exist. To see this, let $\wt{S}_n$ be sum of $n$ iid copies of $Y_{X,\theta}$ (defined in \Cref{iidCramer}). Note that $\wt{S}_n$ takes $\cO(n^{d-1})$ different values. Therefore, $\Prob(\wt{S}_n > an)$ has jumps of order $\cO(n^{-(d-1)})$. As a result, as $\epsilon \to 0$, $\Prob(\wt{S}_n > (a+\epsilon)n)$ and $\Prob(\wt{S}_n > (a-\epsilon)n)$ may differ only by at most $\cO(n^{-(d-1)})$. This forces the order of the strong asymptotic expansion to satisfy $\frac{r+1}{2}<d-1 $, which gives us $r < 2d-3$, as required. Thus, this is an example where weak expansions exist even when strong expansions fail to exist. 

\begin{comment}
Choose $\beta < \frac{1}{d-3}$. If $X$ is $2\beta-$Diophantine then $Y_{X,\gamma}$ admits strong Edgeworth expansion of order $d-2$ and lower. Therefore, $Y_{X,\gamma}$ admits weak local Edgeworth expansions of order $d-3$ and lower on $F^1_0$. This means that $X_n$ admits strong asymptotic expansions for large deviations of order $d-3$ and lower in the range $(0,\sup(\supp\ X))$.
 
{\color{blue} Note that one cannot expect the strong expansions of order $d-2$ or higher to exist because $Y_{X,\gamma}$ has $d$ atoms.}

However, we can show that weak expansions of higher orders exist. Because $Y_{X,\gamma}$ is $2\beta-$Diophantine, given $a \in (0,\sup(\supp\ X))$, choosing $q>\beta(r+2)+1$ and $\alpha > \theta_a$, for every $f \in F^{q}_{r+1,\alpha}$ the order $r$ weak expansion for large deviation exists.
\end{comment}

\subsection{Time homogeneous Markov chains with smooth density}\label{MarkovDensity}

Take $x_n$ to be a time homogeneous Markov process on a compact connected manifold $M$ with $C^1$ transition density $p(x,y)$, which is bounded away from $0$ (non-degenerate). Let $X_n=h(x_{n-1}, x_n)$ for a $C^1$ function $h:M\times M\to\reals$.  We assume that $h(x,y)$ can not be written in the form
\begin{equation}\label{CoB}
h(x,y)= H(y)-H(x) + c(x,y),
\end{equation}
where $H\in L^\infty(M)$ and $c(x,y)$ is lattice valued.
The following lemma characterizes such $h$ (see \cite{FL}).
\begin{lem}
\label{LmMc-CoB}
\eqref{CoB} holds iff there exists $o\in M$ such that the function $x \mapsto h(o,x)+h(x,y)$ is  lattice valued.
% ({\color{red} Piece--wise constant functions means finitely many pieces?})
\end{lem}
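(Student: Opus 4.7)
The equivalence splits naturally into a direct computation for the forward direction and an inversion of that computation for the backward direction. Throughout, ``lattice valued'' is read as taking values in some coset $\alpha+\beta\bZ$ with a fixed spacing $\beta>0$; the shift $\alpha$ may depend on whichever variables are fixed.

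\emph{Forward direction $(\Rightarrow)$.} Assume $h(x,y)=H(y)-H(x)+c(x,y)$ with $H\in L^\infty(M)$ and $c$ lattice valued, say $c(x,y)\in\alpha+\beta\bZ$. Pick an arbitrary $o\in M$ and substitute:
\begin{equation*}
h(o,x)+h(x,y) \;=\; \bigl(H(x)-H(o)\bigr)+\bigl(H(y)-H(x)\bigr)+c(o,x)+c(x,y) \;=\; H(y)-H(o)+c(o,x)+c(x,y).
\end{equation*}
For each fixed $y$, the first two terms are a constant in $x$, while $c(o,x)+c(x,y)\in 2\alpha+\beta\bZ$, so the function $x\mapsto h(o,x)+h(x,y)$ lies in a single coset of $\beta\bZ$, as required.

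\emph{Backward direction $(\Leftarrow)$.} Assume some $o\in M$ makes $x\mapsto \ell(x,y):=h(o,x)+h(x,y)$ lattice valued with common spacing $\beta$ for every $y$; write $\ell(x,y)\in A(y)+\beta\bZ$. The plan is to recover $H$ and $c$ by evaluating at the distinguished point $x=o$. This evaluation yields $A(y)\equiv h(o,o)+h(o,y)\pmod{\beta}$, and subtracting the resulting congruence from the general one cancels $A(y)$ and produces
\begin{equation*}
h(o,x)+h(x,y)-h(o,o)-h(o,y)\;\in\;\beta\bZ\qquad\text{for all }x,y\in M.
\end{equation*}
Rearranging, $h(x,y)=h(o,y)-h(o,x)+\bigl[h(o,o)+\beta\bZ\bigr]$. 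So I define $H(x):=h(o,x)$ and $c(x,y):=h(x,y)-H(y)+H(x)$. Then $c(x,y)\in h(o,o)+\beta\bZ$ is lattice valued, and $H\in L^\infty(M)$ automatically because $M$ is compact and $h$ is $C^1$, hence bounded.

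\emph{Expected difficulty.} There is no substantive analytic obstacle; both directions are algebraic, and the key trick in the backward direction is simply freezing $x=o$ to pin down the shift $A(y)$. The only subtle point worth flagging is that the lattice spacing $\beta$ must be the same for all $y$ in the backward hypothesis; without a uniform $\beta$ across $y$, one cannot cancel $A(y)$ modulo a common lattice and the rearrangement producing $c$ fails. This is automatic from the standard reading of ``lattice valued'' with a single distinguished spacing.
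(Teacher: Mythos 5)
Your proof is correct, and since the paper states this lemma with only a citation to \cite{FL} (no proof given in the text), there is no in-paper argument to compare against. Both directions of your argument are sound: the forward substitution correctly shows the composed function lands in a $y$-dependent coset of the fixed lattice $\beta\bZ$, and the backward ``freeze $x=o$'' trick correctly pins down the shift $A(y)$ and recovers the decomposition \eqref{CoB} with $H(x)=h(o,x)$ and $c(x,y)\in h(o,o)+\beta\bZ$.

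One remark on the worry you flag about uniformity of $\beta$ across $y$: in the setting where this lemma is applied ($M$ a compact \emph{connected} manifold and $h\in C^1(M\times M,\reals)$), the issue evaporates. For each fixed $y$ the map $x\mapsto h(o,x)+h(x,y)$ is continuous, and a continuous map from a connected space into a lattice (a discrete set) is constant. So the hypothesis already forces $h(o,x)+h(x,y)\equiv h(o,o)+h(o,y)$ for all $x$, and the backward direction goes through with $c(x,y)\equiv h(o,o)$ (a constant, hence trivially lattice valued) without ever needing to assume a common spacing. Your algebraic version is more general in that it does not invoke connectedness, but noting the connectedness shortcut would remove the one hypothesis you had to add by fiat.

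A minor stylistic point in the forward direction: you take $o$ arbitrary, which is fine and in fact shows the ``there exists $o$'' in the statement can be strengthened to ``for every $o$'' once \eqref{CoB} holds; it may be worth making that observation explicit, as it is occasionally useful downstream.
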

Note that the CLT holds for $X_n$ and the limiting normal distribution is degenerate if and only if \eqref{CoB} holds with constant $c(x,y)$ (see \cite{GuH}). Therefore, in our setting, the CLT is non-degenerate.

We need the following lemma to obtain the condition $[B]$. 
\begin{lem}\label{PosCptOp}
Let $K(x,y)$ be a positive $C^k$ function on $M \times M$. Let $P$ be an operator on $L^\infty(M)$ given by $$Pu(x)=\int_{\cM} K(x,y) u(y) \, dy.$$ Then $P$ has a simple leading eigenvalue $\lambda >0$, and the corresponding eigenfunction $g$ is positive and $C^k$. 
\end{lem}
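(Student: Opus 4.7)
The plan is to use the Krein--Rutman theorem for compact, strongly positive operators on an ordered Banach space, together with a differentiation-under-the-integral argument for the smoothness. Since $P(L^\infty(M))\subseteq C(M)$ (because $K$ is continuous and $M$ is compact), any eigenfunction of $P$ with nonzero eigenvalue automatically lies in $C(M)$, so I would work on the Banach space $C(M)$ ordered by the cone $C_+=\{u\in C(M):u\geq 0\}$. This cone is closed, normal and reproducing, with non-empty interior (the strictly positive functions).

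First I would verify the two hypotheses of Krein--Rutman. \emph{Compactness:} for any bounded family $\{u_\alpha\}\subset C(M)$ with $\|u_\alpha\|_\infty\leq 1$, the images $Pu_\alpha$ are uniformly bounded by $\|K\|_\infty\mathrm{Vol}(M)$ and equicontinuous, because $K$ is uniformly continuous on the compact set $M\times M$, so
\[
|Pu_\alpha(x)-Pu_\alpha(x')|\leq \mathrm{Vol}(M)\sup_y |K(x,y)-K(x',y)|\to 0
\]
as $x'\to x$, uniformly in $\alpha$. Arzel\`a--Ascoli then gives compactness of $P$ on $C(M)$. \emph{Strong positivity:} because $K$ is continuous and strictly positive on the compact manifold $M\times M$, there is a constant $\eta>0$ with $K(x,y)\geq\eta$ everywhere, so for any $u\in C_+\setminus\{0\}$ we have $Pu(x)\geq \eta\int_M u>0$ uniformly in $x$; in particular $Pu$ lies in the interior of $C_+$.

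Now I would invoke the strong form of the Krein--Rutman theorem (see e.g.\ Deimling, \emph{Nonlinear Functional Analysis}, Thm.~19.3): a compact operator on an ordered Banach space that maps $C_+\setminus\{0\}$ into the interior of $C_+$ has a simple leading eigenvalue $\lambda>0$ equal to its spectral radius, with a positive eigenfunction $g$, and the rest of the spectrum of $P$ is contained strictly inside the disc of radius $\lambda$. This yields the existence of $\lambda$ and $g$ with all the claimed spectral properties and $g>0$ on $M$. Finally, for the regularity claim I would use the eigenequation
\[
\lambda g(x)=\int_M K(x,y)\,g(y)\,dy.
\]
Since $g$ is bounded (continuous on the compact $M$) and $K\in C^k(M\times M)$, the partial derivatives $\partial_x^\alpha K(x,y)$ for $|\alpha|\leq k$ are jointly continuous on $M\times M$, so dominated convergence justifies differentiating under the integral up to order $k$, showing $g\in C^k(M)$.

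The main obstacle is essentially verifying the hypotheses of the Krein--Rutman theorem cleanly; once compactness and strong positivity of $P$ on $C(M)$ are in place the result is standard, and the $C^k$ regularity is a routine bootstrap off of the eigenequation. The only mildly subtle point is choosing the right ambient space: working in $L^\infty(M)$ directly is awkward because the interior of the positive cone there is less convenient, but the observation that $P(L^\infty)\subset C(M)$ reduces the problem to $C(M)$ at no cost, since any $L^\infty$ eigenfunction for a nonzero eigenvalue is automatically continuous.
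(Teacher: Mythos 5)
Your proof is correct, and it reaches the same conclusion by a genuinely different route from the paper's. The paper establishes compactness of $P$ on $L^\infty(M)$ by Weierstrass--approximating $K(x,y)$ uniformly by finite sums of products $J(x)L(y)$, showing $P$ is a norm limit of finite-rank operators; you instead observe that $P$ maps into $C(M)$ and use Arzel\`a--Ascoli there. For the spectral conclusion, the paper appeals to Birkhoff's theory of contraction in the Hilbert projective metric on the positive cone (\cite{Bi}), whereas you invoke the strong form of the Krein--Rutman theorem. These are closely related cousins within Perron--Frobenius theory, but they are not the same theorem: Krein--Rutman needs compactness plus strong positivity (you supply both), while Birkhoff's approach extracts a spectral gap from the uniform contraction of the projective metric, which the strict positivity and boundedness of $K$ give directly. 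Your choice to work on $C(M)$ rather than $L^\infty(M)$ is a small but real improvement in transparency, since the interior of the positive cone in $C(M)$ is the strictly positive functions, a much more pleasant object than its $L^\infty$ counterpart; your remark that any eigenfunction with nonzero eigenvalue automatically lies in $C(M)$ justifies this reduction. The regularity bootstrap by differentiating under the integral sign is identical to the paper's. One small point worth making explicit in your write-up: before applying Krein--Rutman you should note that the spectral radius is strictly positive, which follows from $P\mathbf{1}\geq\eta\,\mathrm{Vol}(M)\,\mathbf{1}$ and hence $r(P)\geq\eta\,\mathrm{Vol}(M)>0$; some statements of Krein--Rutman assume this rather than derive it.
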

\begin{proof}
From the Weierstrass theorem, $K(x,y)$ is a uniform limit of functions formed by finite sums of functions of the form $J(x)L(y)$. Therefore, $P$ can be approximated by finite rank operators. So $P$ is compact on $L^\infty(M)$. Since $P$ is an operator that leaves the cone of positive functions invariant, by a direct application of Birkhoff Theory (see \cite{Bi}), $P$ has a leading eigenvalue $\lambda$ that is positive and simple along with a unique positive eigenfunction $g$ with $\|g\|_{\infty} = 1$.
 
Since, $\lambda g(x) = \int_M K(x,y)g(y) dy$ and $K(x,y)$ is $k$ times continuously differentiable in $x$ and $M \times M$ is compact, we can differentiate under the integral sign $k$ times. This means $g$ is $C^k$. 

\end{proof}

The next theorem establishes the existence of strong and weak expansions for large deviations in this setting. 
\begin{thm}\label{MarkovPrcss}
Take $x_n$ to be a time homogeneous Markov chain on a compact connected manifold $M$ with $C^1$ non-degenerate transition density $p(x,y)$. Let $X_n=h(x_{n-1}, x_n)$ for a $C^1$ function $h:M\times M\to\reals$ that does not satisfy \eqref{CoB}. Take $B=\lim_{n \to \infty}\frac{B_n}{n}\ \text{with}\ B_n= \sup\{\sum_{j=1}^n h(x_{j-1},x_j)| x_i \in M, 1\leq i \leq n \}$. Then, for all $r$, 
\begin{enumerate}
\item[$(a)$] $S_N$ admits the weak asymptotic expansion of order $r$ for large deviations in the range $(0,B)$, for $f \in \fF^q_{r+1,\alpha}$ with $q \geq 1$ and suitable $\alpha$ depending on $a$.
\item[$(b)$] $S_N$ admits the strong asymptotic expansion of order $r$ for large deviations in the range $(0,B)$.
\end{enumerate}
 
\end{thm}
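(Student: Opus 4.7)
The strategy is to verify conditions $[B]$ and $[C]$ for a twisted Markov operator family and invoke Theorems \ref{WeakExp} and \ref{StrongExp}. Take $\Ban = C^0(M)$, $v = \mathbf{1}$, and $\ell \in \Ban'$ the functional representing integration against the initial distribution of $x_0$, so that
\[
\EXP(e^{z S_N}) = \ell(\cL_z^N v), \qquad \cL_z u(x) = \int_M e^{z h(x,y)} p(x,y) u(y)\, dy.
\]
Analyticity of $z \mapsto \cL_z$ follows from the $C^1$ smoothness of $h, p$ and compactness of $M$, which gives (B1). For real $\theta$ the kernel $K_\theta(x,y) = e^{\theta h(x,y)} p(x,y)$ is positive and $C^1$, so Lemma \ref{PosCptOp} applied to $\cL_\theta$ yields a simple positive leading eigenvalue $\lambda(\theta)$ with a positive $C^1$ eigenfunction $g_\theta$, together with a spectral gap that comes from compactness of the kernel operator on $C^0$ (via Arzel\`a-Ascoli, the iterated kernel being uniformly equicontinuous). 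This gives (B2), and positivity of $\ell(\Pi_\theta v)$ in $[C]$ follows since $\Pi_\theta \mathbf{1}$ is a positive multiple of $g_\theta$.

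Next, I would verify the strict convexity $(\log\lambda)''(\theta) > 0$ in $[C]$. A direct perturbative computation expresses this quantity as the asymptotic variance of $\{X_n\}$ under the $\theta$-tilted chain, and the standard Nagaev--Guivarc'h dichotomy identifies its vanishing with $h$ being cohomologous to a constant -- a case ruled out by Lemma \ref{LmMc-CoB} combined with the hypothesis that $h$ does not admit the representation \eqref{CoB}. For condition (B3), if the spectral radius of $\cL_{\theta+is}$ attained $\lambda(\theta)$ for some $s \neq 0$, then a peripheral eigenfunction $\psi$ would satisfy $e^{is h(x,y)}\psi(y) = \mu \psi(x)$ for $p(x,\cdot)$-a.e.\ $y$ with $|\mu| = \lambda(\theta)$, and iterating this relation forces $sh(x,y)$ to be cohomologous to a $2\pi\integers$-valued cocycle, again contradicting Lemma \ref{LmMc-CoB}.

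The main obstacle is verifying (B4). For this I would exploit the smoothness of the kernel via integration by parts on the iterated-integral representation
\[
\cL_{\theta+is}^N u(x) = \int_{M^N} e^{(\theta+is)\sum_{k=1}^N h(y_{k-1},y_k)} \prod_{k=1}^N p(y_{k-1},y_k)\, u(y_N)\, dy_1\cdots dy_N,
\]
with $y_0 = x$. Following the scheme used for smooth Gibbs-Markov-type operators, repeated integration by parts in a vector field dual to $\nabla_{y_k} h$ in each intermediate variable $y_k$ -- handling the possible vanishing of $\nabla_y h$ through a partition of unity together with the non-coboundary information extracted above -- contributes a factor $\cO(|s|^{-1})$ per integration, while derivatives of $p$ and $h$ contribute only a geometric factor $C^N$ from their uniform $C^1$ norms on the compact manifold. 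This yields $\|\cL_{\theta+is}^N\|\leq C^N \lambda(\theta)^N |s|^{-(N-1)}$ for $|s| \geq K$ with $K$ chosen larger than $C$, so (B4) holds with arbitrarily large $r_1$, and via the bootstrap in Remark \ref{B1-B4} also with arbitrarily large $r_2$. With $[B]$ and $[C]$ verified, Theorems \ref{WeakExp} and \ref{StrongExp} deliver the weak and strong asymptotic expansions of all orders in the range $(0, B)$, where $B = \lim_{\theta\to\infty}\log\lambda(\theta)/\theta$ is identified with the maximum deterministic growth rate of $S_N/N$ via Remark \ref{LDPHolds}(4). The delicate part, which I expect to consume most of the write-up, is the uniform control of the partition-of-unity argument near the exceptional set $\{\nabla_y h = 0\}$: there the non-coboundary hypothesis must be leveraged quantitatively to prevent cancellation of the oscillatory gain.
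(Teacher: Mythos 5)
Your overall strategy (verify $[B]$ and $[C]$, then invoke Theorems \ref{WeakExp} and \ref{StrongExp}) is the same as the paper's, and your treatment of (B1), (B2), (B3) and $[C]$ is essentially correct and follows the paper's route (conjugation to a tilted stochastic operator, Perron--Frobenius via Lemma \ref{PosCptOp}, identifying $(\log\lambda)''$ with the asymptotic variance of the tilted chain). The minor substitution of $C^0(M)$ for $L^\infty(M)$ is harmless.

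The genuine problem is in your verification of (B4). The paper's argument is much weaker, and that is the point: after conjugating $\cL_\theta$ by the $C^1$ eigenfunction $g_\theta$ to obtain a genuine Markov operator $Q_\theta$, a \emph{single} integration by parts applied to $Q^2_{\theta+is}$ (as in \cite[Section 3.6.3]{FL}) yields a uniform bound $\|Q^2_{\theta+is}\|\le 1-\epsilon_\theta<1$ for all $|s|\ge r_\theta$. Iterating this fixed bound gives $\|\cL^n_{\theta+is}\|\le C\lambda(\theta)^n(1-\epsilon_\theta)^{\lfloor n/2\rfloor}$ for all large $|s|$, i.e.\ exponential decay in $n$ uniformly in $s$, which supplies (B4) with $r_1$ arbitrarily large; no decay in $|s|$ is required.

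You instead aim for the much stronger bound $\|\cL^N_{\theta+is}\|\le C^N\lambda(\theta)^N|s|^{-(N-1)}$ by extracting a factor $\cO(|s|^{-1})$ from each of the $N-1$ intermediate integrations. This is not justified by the hypotheses. The $y_k$-phase is $s\big(h(y_{k-1},y_k)+h(y_k,y_{k+1})\big)$, whose $y_k$-gradient $\partial_2 h(y_{k-1},y_k)+\partial_1 h(y_k,y_{k+1})$ generically vanishes on a set depending on the neighbouring variables. Near such critical points a single integration by parts produces no gain in $|s|$; stationary phase would yield at best $|s|^{-d/2}$ and only under a non-degeneracy hypothesis on the critical set that the theorem does not impose (and which is not implied by $h$ failing \eqref{CoB}: that hypothesis rules out degeneracy of the CLT, not criticality of the phase). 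Your own closing caveat --- that the ``delicate part'' is controlling the exceptional set $\{\nabla_y h=0\}$ via ``non-coboundary information'' --- is exactly the place where the argument fails: the non-coboundary condition is qualitative, and a partition of unity by itself cannot upgrade it to a quantitative $|s|^{-1}$ oscillatory gain. The paper deliberately avoids this trap by settling for a uniform-in-$s$ bound $<1$, which is all (B4) needs. You should replace your (B4) argument with the paper's (conjugate, integrate by parts once on $Q^2$, iterate), and then add the short subadditivity argument showing $B=\lim B_n/n$ and that $I(a)<\infty$ on $(0,B)$, which your proposal also skips.
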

\begin{proof}
Take $\Ban=L^\infty(M)$ and consider the family of integral operators,
$$(\cL_z u)(x)=\int_{\cM} p(x,y) e^{z h(x,y)} u(y)\, dy,\ z \in \complex.$$
Let $\mu$ be the initial distribution of the Markov chain. Then, using the Markov property, we have $\EXP_{\mu}[e^{zS_n}]=\mu(\cL^n_z 1).$ Now we check the condition $[B]$.

It is straightforward that $z \mapsto \cL_z$ is entire and therefore (B1) holds. Note that, for all $\theta$, $\cL_\theta$ is of the form $P$ in \Cref{PosCptOp}. Therefore, (B2) holds for all $\theta$. Take $\lambda(\theta)$ to be the top eigenvalue and $g_\theta$ to be the corresponding eigenfunction. Then $g_\theta$ is $C^1$. 

To show (B3) and (B4), we define a new operator $Q_\theta$ as follows. 
$$(Q_{\theta} u )(x) = \frac{1}{\lambda(\theta)}\int_{\cM} e^{\theta h(x,y)} p(x,y) u(y) \frac{g_\theta(y)}{g_\theta(x)}\, d(y).$$
It is easy see to that $$ p_{\theta} (x,y)=\frac{e^{\theta h(x,y)} p(x,y)}{g_{\theta} (x) \lambda(\theta)}\ \text{and}\ dm_{\theta}(y) =g_{\theta} (y)\, dy $$ define a new Markov chain $x^\theta_n$ with the associated Markov operator $ Q_\theta $. Observe that $Q_\theta$ is a positive operator and $Q_\theta 1 = \frac{1}{\lambda(\theta)}\int_{\cM} e^{\theta h(x,y)} p(x,y) \frac{g_\theta(y)}{g_\theta(x)}\, dy = 1 $ (since $g_\theta$ is the eigenfunction corresponding to eigenvalue $\lambda(\theta)$ of $\cL_\theta$). 

Now we can repeat the arguments in \cite{FL} to establish the properties of the perturbed operator given by $$(Q_{\theta+is})u (x) = \int_{\cM} e^{ish(x,y)} p_\theta (x,y) \, dm_{\theta}(y) $$
Since \eqref{CoB} does not hold, %there is no $H$ such that $h(x,y)+H(y)-H(x)$ is piece-wise constant, 
we conclude that sp$(Q_{\theta+is}) \subset \{|z|<1\}$ (see \cite[Section 3.6.3]{FL}). 

Take $G_\theta$ to be the operator on $L^\infty(M)$ that corresponds to multiplication by $g_\theta$. Then $\cL_{\theta+is}=\lambda(\theta) G_\theta \circ Q_{\theta+is} \circ G^{-1}_\theta$. Therefore, sp$(\cL_{\theta+is})$ is the sp$(Q_{\theta+is})$ scaled by $\lambda(\theta)$. This implies sp$(\cL_{\theta+is}) \subset \{|z|< \lambda(\theta) \}$ as required. 

Also, since $g_\theta$ is $C^1$, we can integrate by parts, as in \cite[Section 3.6.3]{FL}, to conclude that there exist $\epsilon_\theta \in (0,1)$ and $r_\theta > 0$ such that $\|Q^2_{\theta+is}\| \leq (1-\epsilon_\theta)$ for all $|t| \geq r_\theta$. Therefore, $$\|\cL^n_{\theta+is}\|=\lambda(\theta)^n\|G_\theta Q^n_{\theta+is}G^{-1}_\theta \| \leq \lambda(\theta)^n\|G_\theta\|\|Q^n_{\theta+is}\|\|G^{-1}_\theta \| \leq  C\lambda(\theta)^n(1-\epsilon_\theta)^{\lfloor n/2\rfloor}.$$ 

Now we establish $[C]$. Since \eqref{CoB} does not hold, the asymptotic variance $\sigma^2_\theta$ of $X^\theta_n=h(x^\theta_{n-1},x^\theta_n)$ is positive. Taking $\gamma(\theta+is)$ to be the top eignevalue of $Q_{\theta+is}$, $\lambda(\theta+is)=\lambda(\theta)\gamma(\theta+is)$. Thus, 
\begin{align*}
(\log \lambda(\theta))^{\prime \prime} =-\frac{d^2}{ds^2} \log \lambda(\theta+is)\Big|_{s=0} &= -\frac{d^2}{ds^2}  \log \gamma(\theta+is) \Big|_{s=0} \\ &=-\frac{\gamma^{\prime \prime}(\theta)}{\gamma(\theta)}+\Big(\frac{\gamma^{\prime}(\theta)}{\gamma(\theta)}\Big)^2=-\gamma^{\prime \prime}(\theta)+\gamma^{\prime}(\theta)^2.
\end{align*}
Put $S^\theta_N=X^\theta_1+\dots+X^\theta_N$. Since $\EXP(e^{isS^\theta_N})=\int Q^N_{\theta+is} 1 \, d\mu$, from \eqref{derivatives} we have that $\gamma^{\prime}(\theta)^2-\gamma^{\prime \prime}(\theta)=\sigma^2_\theta$. Thus, $(\log \lambda (\theta))^{\prime \prime}=\sigma^2_\theta >0$. %Therefore, $\log \lambda (\theta)$ is a strictly convex function. 

Note that $\cL_\theta = \lambda(\theta) \Pi_\theta + \Lambda_\theta$, where $\Pi_\theta$ is the projection onto the top eigenspace. From \cite[Chapter III]{HH}, $\Pi_\theta = g_\theta \otimes \varphi_\theta$, where $\varphi_\theta$ is the top eigenfunction of $Q^*_\theta$, the adjoint of $Q_\theta$. Since $Q^*_\theta$ itself is a positive compact operator acting on $(L^\infty)^*$ (the space of finitely additive finite signed measures), $\varphi_\theta$ is a finite positive measure. Hence, $\mu(\Pi_\theta 1) = \varphi_\theta(1) \mu(g_\theta) >0$ for all $\theta$.

The rate function $I(a)$ is finite for $a\in (0,B)$, where $B = \lim_{ \theta \to \infty} \frac{\log \lambda(\theta)}{\theta}$. We observe that $B<\infty$ because $h$ is bounded, i.e.,\hspace{3pt}$\frac{S_n}{n} \leq \|h\|_\infty$. In fact, $$B=\lim_{N \to \infty}\frac{B_n}{n}\ \text{with}\ B_n= B=\lim_{n \to \infty}\frac{B_n}{n}\ \text{with}\ B_n= \sup\{\sum_{j=1}^n h(x_{j-1},x_j)| x_i \in M, 1\leq i \leq n \}.$$  

To see this, note that $B_n$ is subadditive. So $\lim_{n \to \infty}\frac{B_n}{n}$ exists and is equal to $\inf_{n} \frac{B_n}{n}$. Given $a>B$, there exists $N_0$ such that $\frac{S_n}{n}\leq \frac{B_n}{n}<a$  for all $n>N_0$. Thus $\Prob (S_n \geq an)=0$ for all $n>N_0$, and hence $I(a)=\infty$. Next, given $a<B$, for all $n$, $B_n>an$. Fix $n$. Then there exists a realization $x_1,\dots,x_n$ such that $an < \sum h(x_{j-1},x_j) \leq B$. Since $h$ is uniformly continuous on $M \times M$, there exists $\delta>0$ such that by choosing $y_j$ from a ball of radius $\delta$ centred at $x_j$, we have $an < \sum h(y_{j-1},y_j) \leq B$. We estimate the probability of choosing such a realization $y_1,\dots,y_n$ and obtain a lower bound for $\Prob(S_n \geq an)$:
\begin{align*}
\Prob(S_n \geq an) &\geq \int_{\mathbb{B}(x_n,\delta)}\dots\int_{\mathbb{B}(x_1,\delta)} \int_{\mathbb{B}(x_0,\delta)} p(y_{n-1},y_n) \dots p(y_0,y_1)\, d\mu(y_0)\, dy_1 \dots \, dy_N \\ &\geq { \mu(\mathbb{B}(x_0,\delta))} \Big(\min_{x,y \in \cM} p(x,y)\Big)^n \text{vol}(\mathbb{B}_\delta)^n.
\end{align*}
Therefore, $I(a)< \infty$, as required.
\end{proof}

\subsection{Finite State Markov chains}\label{FiniteMarkov}

Consider a time homogeneous Markov chain $x_n$ with state space $S=\{1,\dots,d\}$ whose transition probability matrix $P=(p_{jk})_{d\times d}$ is positive. Suppose that $\bh=(h_{jk})_{d\times d} \in $ M$(d,\reals)$ is such that there are no constants $c, r$ and a $d-$vector $H$ such that 
\begin{equation}\label{NoCo-B}
r h_{jk}=c+H(k)-H(j) \mod\ 2\pi
\end{equation}
for all $j,k$. Define $X_n=h_{x_nx_{n+1}}$.

Next, define $b_{l,j,k}=h_{lj}+h_{jk}$, $l,j,k \in \{1,\dots,d\}$ and $$d(s)=\max_{l,j,k}\ \text{dist}((b_{l,j,k}-b_{l,1,k})s, 2\pi \integers).$$ 
We further assume that $\bh$ is $\beta-$Diophantine, that is, there exists $K \in \reals$ such that for all $|s|>1$, 
\begin{equation}\label{DiophCon}
d(s)\geq \frac{K}{|s|^\beta}.
\end{equation}
If $\beta>\frac{1}{d^3-d^2-1}$, then almost all $\bh$ are $\beta-$Diophantine (see \cite{Sp}). These assumptions yield the following result.

\begin{thm}
Take $x_n$ to be a time homogeneous Markov chain on \{1,\dots,d\} with a positive transition probability matrix $P=(p_{jk})_{d\times d}$. Let $X_n=h_{x_{n}x_{n+1}}$ for $\bh$ that does not satisfy \eqref{NoCo-B} and is  $\beta-$Diophantine. Take $B=\lim_{n \to \infty}\frac{B_n}{n}$, $B_n= \sup\{\sum_{j=1}^n \bh_{x_{j-1}x_j}| x_0,\dots,x_n \in S\}$. Then, for all $r$, $S_N$ admits the weak expansion of order $r$ in the range $(0, B)$, for $f \in \fF^q_{r+1,\alpha}$ where $q>\lfloor (r+1)\beta \rfloor$, and for suitable $\alpha$ depending on $a$.
\end{thm}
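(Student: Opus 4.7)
The plan is to verify conditions $[B]$ and $[C]$ for a suitable Nagaev--Guivarc\textsc{\char13}h-type operator and invoke \Cref{WeakExp}. Let $\Ban = \complex^d$ and define the family of twisted transition matrices $\cL_z \in M(d,\complex)$ by $(\cL_z)_{jk} = p_{jk}\, e^{z h_{jk}}$. Setting $v = \mathbf{1}$ (the column of ones) and $\ell = \mu$ (the initial distribution viewed as a row vector), the Markov property gives $\EXP_\mu(e^{z S_n}) = \mu\, \cL_z^n\, \mathbf{1} = \ell(\cL_z^n v)$, matching \eqref{MainAssum}. The entries of $\cL_z$ are entire in $z$, so $(B1)$ is immediate. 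For real $\theta$, $\cL_\theta$ has strictly positive entries, so Perron--Frobenius furnishes a simple positive top eigenvalue $\lambda(\theta)$ with a genuine spectral gap and $\lambda(0)=1$, yielding $(B2)$.

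For $(B3)$ and $(B4)$ I would adopt the change-of-basis trick from \Cref{MarkovDensity}: letting $g_\theta > 0$ be the Perron eigenvector of $\cL_\theta$ and $G_\theta = \text{diag}(g_\theta)$, the matrix $Q_\theta := \lambda(\theta)^{-1} G_\theta^{-1} \cL_\theta G_\theta$ is the transition matrix of the tilted Markov chain, and $\cL_{\theta+is} = \lambda(\theta)\, G_\theta\, Q_{\theta+is}\, G_\theta^{-1}$. Hence $\text{sp}(\cL_{\theta+is}) = \lambda(\theta)\cdot \text{sp}(Q_{\theta+is})$. The non-cohomology hypothesis \eqref{NoCo-B} rules out any eigenvalue of $Q_{\theta+is}$ on the unit circle for $s \neq 0$ (such an eigenvalue would force the $h_{jk}$ to satisfy precisely a relation of the form \eqref{NoCo-B}), so by compactness $\text{sp}(Q_{\theta+is}) \subset \{|z|<1\}$, which gives $(B3)$.

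For $(B4)$ I would pass to the second iterate: a direct computation yields
\[
(Q_{\theta+is}^2)_{lk} = \sum_j \tilde{q}_{l,j,k}\, e^{is\,b_{l,j,k}},
\]
where the coefficients $\tilde{q}_{l,j,k} \geq 0$ sum to $1$ in $j$ (since $Q_\theta$ is Markov) and are uniformly bounded below by a positive constant because all $p_{jk}$ and all entries of $g_\theta$ are strictly positive on the finite state space. Regarding $j$ as a discrete random variable under this distribution and applying the argument of \Cref{IneqChar} to its characteristic function (the relevant differences being precisely $b_{l,j,k} - b_{l,1,k}$), we obtain
\[
\big|(Q_{\theta+is}^2)_{lk}\big| \leq 1 - c_\theta\, d(s)^2.
\]
Combining this row-wise $\ell^\infty$ bound with the Diophantine estimate $d(s)\geq K|s|^{-\beta}$ for $|s|>1$ and iterating gives $\|Q_{\theta+is}^n\|\leq C(1 - c_\theta K^2|s|^{-2\beta})^{\lfloor n/2 \rfloor}$. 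Translating back to $\cL_{\theta+is}$, we conclude that for any $r_1 < 1/(2\beta)$ and any desired $r_2$ we have $\|\cL_{\theta+is}^N\|\leq \lambda(\theta)^N/N^{r_2}$ throughout $K \leq |s|\leq N^{r_1}$, which is $(B4)$. Finally, $[C]$ follows exactly as in \Cref{MarkovDensity}: the asymptotic variance of the tilted chain equals $(\log\lambda)''(\theta)$ and is strictly positive because \eqref{NoCo-B} precludes the coboundary structure that would collapse the CLT, while $\mu(\Pi_\theta\mathbf{1})>0$ since the Perron eigenspace is spanned by $g_\theta>0$. Finiteness of $I(a)$ on $(0,B)$ follows from the subadditivity argument in \Cref{MarkovPrcss}. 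The weak expansion then comes directly from \Cref{WeakExp} applied with $r_1<1/(2\beta)$: the requirement $q > (r+1)/(2r_1)+1$ is met by any integer $q>\lfloor(r+1)\beta\rfloor$ after choosing $r_1$ close enough to $1/(2\beta)$.

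The main obstacle is $(B4)$, specifically the transfer from the arithmetic condition \eqref{DiophCon} on $\bh$ to a quantitative spectral bound on $Q_{\theta+is}^2$. Two subtleties must be addressed carefully: first, that the quantity $d(s)$ defined via $b_{l,j,k}-b_{l,1,k}$ is precisely the correct one to control the phase cancellation in the second iterate; second, that the weights $\tilde{q}_{l,j,k}$ are bounded below \emph{uniformly in} $\theta$ on a neighborhood of $0$, so that $c_\theta$ in the phase-cancellation estimate can be taken positive uniformly, which is essential for the $(B4)$ inequality. Once this step is secured, everything else is a straightforward adaptation of the $C^1$-density case.
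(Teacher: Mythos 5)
Your proposal follows the paper's proof in essentially all details: same family of twisted transition matrices, same conjugation $\cL_{\theta+is}=\lambda(\theta)G_\theta Q_{\theta+is}G_\theta^{-1}$ to the tilted stochastic matrix, (B3) via non-cohomology of $\bh$, (B4) via a second-iterate phase-cancellation estimate driven by \Cref{IneqChar} and the Diophantine bound, and $[C]$ exactly as in the Markov-density case, with the final $r_1=(1-\epsilon)/(2\beta)$ fed into \Cref{WeakExp}. One small imprecision: the coefficients $\tilde q_{l,j,k}=\bar p^{\,\theta}_{lj}\bar p^{\,\theta}_{jk}$ sum over $j$ to $\big((\bar P^{\theta})^2\big)_{lk}$, not to $1$; after normalizing by that entry (which is uniformly bounded below since the chain is positive and the state space is finite) the \Cref{IneqChar} argument goes through as you describe, so this does not affect the conclusion.
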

 
\begin{proof}
We use ideas from the previous section and \cite{FL} to establish conditions $[B]$ and $[C]$. 

Consider the family of operators $\cL_{\theta+is} : \complex^d \to \complex^d$,
\begin{equation}\label{TransProb}
(\cL_{\theta+is}f)_j = \sum_{k=1}^d e^{(\theta+is)h_{jk}}p_{jk}f_k,\ j=1,\dots,d.
\end{equation}
Take $v=\bf{1}$ and $\ell=\bmu_0$, the initial distribution. Then, from the Markov property, we obtain $\EXP(e^{isS_N})=\sum_{j=1}^d(\bmu_0)_j(\cL^n_{\theta+is} {\bf 1})_j=\bmu_0(\cL^n_{\theta+is} {\bf 1})$. Obviously, $z \mapsto \cL_z$ is entire. So \eqref{MainAssum} and (B1) hold. 

The matrix $P^\theta=(e^{\theta h_{jk}}p_{jk})$ is positive for each $\theta$, and hence, by the Perron-Frobenius theorem, $P^\theta$ has a positive leading eigenvalue $\lambda(\theta)$ that is simple, and the corresponding eigenvector $\bg^\theta=(g^\theta_j)$ is positive. In addition, $P$ (corresponding to $\theta=0$) is stochastic. So its top eigenvalue satisfies $\lambda(0)=1$. Since we deal with finite-dimensional spaces, the remaining part of (B2) follows immediately. 

Next, define a new Markov chain $x^\theta_n$ corresponding to the stochastic matrix 
\begin{equation}\label{TiltedMatrix}
\overline{P}^\theta=\Big(\frac{e^{\theta h_{jk}}p_{jk}g^\theta_k}{\lambda(\theta)g^\theta_j}\Big).
\end{equation}
Then the corresponding operator is
\begin{equation*}
(Q_{it}f)_j = \sum_{k=1}^d e^{ith_{jk}}\frac{e^{\theta h_{jk}}p_{jk}g^\theta_k}{\lambda(\theta)g^\theta_j}f_k,\ j=1,\dots,d.
\end{equation*}

Also, (B3) follows because \eqref{NoCo-B} does not hold. For a proof of this fact refer to \cite[Section 3.6.2]{FL}, where (B3) is proven for $\theta=0$. From the Diophantine condition \eqref{DiophCon}, there exists $c >0$ such that
$\|\cL^2_{is}\| \leq 1- cd(s)^2$. For a proof of this, refer to \cite[Section 3.6.2]{FL}. So $$\|\cL^n_{is}\|  \leq \big(1- c d(s)^2  \big)^{\lceil n/2 \rceil} \leq e^{-C  s^{-2\beta} n/2}\ \text{for}\ |s|>1.$$ 
Thus, $\|\cL^n_{is}\|  \leq e^{-C n^{\epsilon}/2}$ when $1<|s|<n^{\frac{1-\epsilon}{2\beta}}$. 
Note that the Diophantine nature of the matrix $\bh$ is independent of the change of measure done in  \eqref{TiltedMatrix} and hence, the underlying Markov process. Therefore, the same proof applies to $\theta \neq 0$.  %because it relies only on the properties of $\bh$, not on the underlying Markov process. 
%{\color{red}Similarly, it can be easily shown that the argument for $\theta =0$ (original chain) works for all $\theta$ and therefore $\|Q^n_{is}\|\leq e^{-C n^{\epsilon}/2}$ for $1<|s|<n^{\frac{1-\epsilon}{2\beta}}$. }%This is because the decay depends on the Diophantine nature of the matrix $\bh$, not on the underlying Markov process. 

Note that $\cL_{\theta+is} = \lambda(\theta) G_\theta \circ Q_{is} \circ G^{-1}_\theta$, where $G_\theta$ corresponds to multiplication by $G_\theta=(g^\theta_j\delta_{jk})$. Therefore, $\|\cL^n_{\theta+is}\|\leq C\lambda(\theta)^se^{-C n^{\epsilon}/2}$ for $1<|s|<n^{\frac{1-\epsilon}{2\beta}}$, which gives us (B4) with $r_1=\frac{1-\epsilon}{2\beta}$, where $\epsilon>0$ can be made arbitrarily small. 

The same argument as in in \Cref{MarkovPrcss} adapted to finite state chains gives us condition $[C]$ and the fact that the range of large deviations is $(0,B)$, where 
\begin{equation}\label{LDrange}
B= \lim_{n \to \infty} \frac{B_n}{n}\ \text{with}\ B_n= \sup\Big\{\sum_{j=1}^n \bh_{x_{j-1}x_j}| x_0, \dots x_n \in S  \Big \}.
\end{equation}
\end{proof}

%As a result, this case is similar to the discrete iid case. Define, 
However, as in the case of discrete iid random variables, strong expansions of order $2d^2-3$ or higher do not exist because the number of distinct values $X_n$ takes is at most $d^2$.

Note that in the proof of the previous theorem, the Diophantine nature of $\bh$ was not used in proving (B1), (B2), (B3) and [C]. Therefore, we also have the following first order asymptotics for large deviations for a general finite state Markov chain. 

\begin{thm}
Take $x_n$ to be a time homogeneous Markov chain on \{1,\dots,d\} with a positive transition probability matrix $P=(p_{jk})_{d\times d}$. Let $X_n=h_{x_{n}x_{n+1}}$ for $\bh$ that does not satisfy \eqref{NoCo-B}. Then $S_N$ admits the order $0$ strong expansion for large deviations in the range $(0,B)$ where $B$ is as in \eqref{LDrange}.
\end{thm}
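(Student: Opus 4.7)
The proof follows the same template as the preceding theorem, with the observation that the Diophantine hypothesis on $\bh$ was only used to verify the quantitative bound (B4), which is not needed for \Cref{FirstTerm}. Hence we set up the same Nagaev--Guivarc\textsc{\char13}h framework and verify only (B1), (B2), (B3) and $[C]$.

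\textbf{Step 1 (Framework).} Define $\cL_{\theta+is}:\complex^d\to\complex^d$ by \eqref{TransProb}, take $v=\mathbf{1}$ and $\ell=\bmu_0$ (the initial distribution). By the Markov property, $\EXP(e^{zS_N})=\bmu_0(\cL_z^N\mathbf{1})$, which is \eqref{MainAssum}. The family $z\mapsto\cL_z$ is a matrix whose entries are entire functions of $z$, so (B1) holds with $\delta=\infty$.

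\textbf{Step 2 (B2).} For each real $\theta$, the matrix $P^\theta=(e^{\theta h_{jk}}p_{jk})$ has strictly positive entries, so by the Perron--Frobenius theorem it has a simple positive leading eigenvalue $\lambda(\theta)$ with a positive eigenvector $\bg^\theta$, and all other eigenvalues lie strictly inside the disk of radius $\lambda(\theta)$. Since $P^0=P$ is stochastic, $\lambda(0)=1$.

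\textbf{Step 3 (B3).} As in the preceding proof, conjugate $\cL_{\theta+is}$ by multiplication by $\bg^\theta$ to produce a matrix of the form $\lambda(\theta)Q_{is}$, where $Q_{is}$ is the twisted transition operator of the tilted chain $x^\theta_n$ with transition probabilities \eqref{TiltedMatrix}. The assumption that \eqref{NoCo-B} fails is preserved under this tilt (it is a condition on $\bh$ alone, independent of the underlying positive stochastic matrix), so the argument of \cite[Section 3.6.2]{FL} gives $\mathrm{sp}(Q_{is})\subset\{|z|<1\}$ for all real $s\neq 0$. Scaling by $\lambda(\theta)$ yields (B3).

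\textbf{Step 4 ($[C]$).} Writing $\lambda(\theta+is)=\lambda(\theta)\gamma_\theta(is)$ where $\gamma_\theta$ is the leading eigenvalue of $Q_{is}$, the same identity used in \Cref{MarkovPrcss} gives $(\log\lambda)''(\theta)=\sigma_\theta^2$, the asymptotic variance of the tilted Birkhoff sum, which is strictly positive because \eqref{NoCo-B} fails. For the second half of $[C]$, the spectral projector satisfies $\Pi_\theta=\bg^\theta\otimes\bvf^\theta$ where $\bvf^\theta$ is the left Perron eigenvector of $P^\theta$ (positive, by Perron--Frobenius applied to $(P^\theta)^\top$). Hence $\ell(\Pi_\theta v)=\bmu_0(\bg^\theta)\bvf^\theta(\mathbf{1})>0$.

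\textbf{Step 5 (Range of LDP).} The argument of \Cref{MarkovPrcss} applies verbatim: $B_n$ as in \eqref{LDrange} is subadditive, $B=\lim_n B_n/n$ is finite, $I(a)=\infty$ for $a>B$, and for $a<B$ the positivity of all transition probabilities together with continuity of $\bh$ in its (discrete) arguments gives $\Prob(S_n\geq an)\geq c^n$ for some $c>0$, so $I(a)<\infty$. Consequently $B=\lim_{\theta\to\infty}\frac{\log\lambda(\theta)}{\theta}$ and the range of validity is $(0,B)$.

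\textbf{Step 6 (Conclusion).} With (B1), (B2), (B3) and $[C]$ all verified, \Cref{FirstTerm} applies and yields the order $0$ strong expansion
\[
\Prob(S_N\geq aN)e^{I(a)N}=\frac{\ell(\Pi_{\theta_a}v)\sqrt{I''(a)}}{\theta_a\sqrt{2\pi N}}(1+o(1))
\]
for every $a\in(0,B)$. The only nontrivial ingredient beyond bookkeeping is the failure of the coboundary identity \eqref{NoCo-B}, which simultaneously delivers (B3) and the strict positivity of the asymptotic variance in $[C]$; both are standard in the Nagaev--Guivarc\textsc{\char13}h theory and are already proved in \cite[Section 3.6.2]{FL}.
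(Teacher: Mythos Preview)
Your proof is correct and follows exactly the same approach as the paper: observe that in the proof of the preceding theorem the Diophantine condition on $\bh$ was used only for (B4), while (B1), (B2), (B3) and $[C]$ were established from positivity of $P$ and the failure of \eqref{NoCo-B} alone, and then invoke \Cref{FirstTerm}. The paper's own proof is a two-line remark to precisely this effect; you have simply written out the details more explicitly.
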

%Because for small $\epsilon$, $\lceil \frac{r+1}{2(1-\epsilon)} \rceil= \lceil \frac{r+1}{2} \rceil$ take $q>\frac{r+1}{2}\beta$. Then, given $a\in (0,\sup h_{jk})$ and $\alpha>\theta_a$, $X^{\bh}_n$ admits weak expansions for large deviations of order $r$ for $f \in F^{q+2}_{r+1,\alpha}$. 
\subsection{Sub-shifts of finite type}\label{SFTs}
In this section, we prove an exact Large Deviation Principle for subshifts of finite type (SFT's). Many concrete dynamical systems like Axiom A diffeomorphisms and Markov maps of the interval can be studied by converting them to SFT's via a symbolic coding.  See, for instance, \cite{PP} for a multitude of such examples. Hence, the exact Large Deviation Principle we establish here, applies beyond the setting in which it is introduced. 

We recall some facts about SFT's without proof. \cite[Chapters 1--4]{PP} contain a detailed account of the theory as well as proofs of the following.

Let $A$ be a $k \times k$ matrix with only $0$ and $1$ as entries. Define  
$$\Sigma^+_A=\Big\{ \vec{x}=(x_j) \in \Sigma^+=\prod_{j=0}^{\infty} \{1,2,\dots,k\}\, \Big|\, A(x_j,x_{j+1})=1, \ \forall j \in \naturals_0  \Big\}.$$
Let $\sigma : \Sigma^+_A \to \Sigma^+_A$ act on a sequence by truncating the first symbol and  moving remaining elements to the left by one position, i.e., $\sigma\big((x_n)_{0}^\infty\big)=(x_{n+1})_{0}^\infty$. Then, $(\Sigma^+_A,\sigma)$ is called a subshift of finite type (also known as a topological Markov chain).
%In fact, every subshift of finite type can be obtained in this way. 
Define the period $d$ of $A$ by $d = \gcd \{n\ |\ \exists j, A^n_{jj}>0 \}$ and if $d=1$, $A$ is called aperiodic. Also, $A$ is called irreducible if for all $i,j$ there exists $N$ such that $A^N_{ij}>0$.

The Tychonoff product topology on $\Sigma^+$ is metrizable. Let $\epsilon \in (0,1)$. Then, a metric on $\Sigma$ can be defined by $\dd(\vec{x},\vec{y})=\epsilon^N$ where $N \in \naturals_0$ is the maximal $N$ such that $x_j=y_j$ for all $|j|<N$. This induces the product topology on $\Sigma^+$ and we consider its restriction to $\Sigma^+_A$. 

Let $f:\Sigma^+_A \to \complex$ be continuous and $\text{var}_n(f)=\sup\big\{|f(\vec{x})-f(\vec{y})|\, \big|\, x_j=y_j,\, \forall  j \leq n\big\}$.
%allows us to characterize H\"older continuous functions on $\Sigma$. 
Take $H^{C,\alpha}$ to be the $\alpha-$H\"older functions with H\"older constant $C$.  Then,
$f \in H^{C,\alpha}$ if and only if $\text{var}_n(f) \leq C \epsilon^{n\alpha}$ for all $n\in \naturals_0$.
In particular, this characterizes the space of Lipschitz functions (corresponds to $\alpha=1$) on $\Sigma^+_A$ which is denoted by $F^+_\epsilon$. Define,
$
|f|_\infty = \sup \big\{|f(\vec{x})|\ \big|\ \x \in\Sigma \big\}$, 
$|f|_\epsilon= \sup \big\{\epsilon^{-n}\text{var}_n f \ \big|\ n \in \naturals_0 \big\}$ and $\|f\|_\epsilon = |f|_\infty+ |f|_\epsilon$. 
Then, $(F^+_\epsilon,\|\cdot\|_\epsilon)$ is a Banach space such that $\|\cdot\|_\epsilon-$bounded sets are $|\cdot|_\infty-$compact. 

From now on we focus only on $\reals-$valued functions in $F^{+}_\epsilon$. 
A function $f\in F^+_\epsilon$ is called a coboundary if there exist $g \in F^+_\epsilon$ such that $f = g \circ \sigma - g$, and it is said to be generic if the only solution to $F(\sigma(\x)) = e^{itf(\x)}F(\x)$ in $F^+_\epsilon$ is a constant $F$ and $t=0$. Note that if $f$ is a coboundary then it is not generic. Given $f$ and $g$, we say $f$ and $g$ are cohomologous if $f-g$ is a coboundary.  

Define the pressure of $f$ by
$$P(f)=\sup_{\mu \in \cM^1_\sigma}\Big \{h_\mu(\sigma)+\int f\, d\mu\Big \}$$
where $h_\mu(\sigma)$ is the entropy of $\sigma$ with respect to $\mu$ and $\cM^1_\sigma$ is the space of $\sigma-$invariant probability measures. Then, there is a unique $\sigma-$invariant probability measure $m$ such that $P(f)=h_m(\sigma)+\int f\, dm$, and this $m$ is called the stationary equilibrium state of $f$, and $f$, a potential of $m$. It follows that $P(f+c)=P(f)+c$, and if $f$ and $g$ are cohomologous then $P(f)=P(g)$. Given a stationary equilibrium state $m$ and any two potentials $f,g$ of $m$, there is a constant $c$ such that $f-g$ is cohomologous to $c$. We call $f$ a normalized potential of $m$ if $f$ is a potential of $m$ and $P(f)=0$. In fact, this potential $f$ is unique upto a coboundary.

Now, we state and prove a strong large deviation result for irreducible, aperiodic SFT's.

\begin{thm}
Suppose $(\Sigma^+_A, \sigma)$ is a subshift of finite type with an irreducible, aperiodic $A$. Let $g \in F^+_\epsilon$ be $\reals-$valued. Suppose $m$ is the stationary equilibrium state of $g$ and it is normalized.  Let $f \in F^+_\epsilon$ be $\reals-$valued. Suppose $f \in F^+_\epsilon$ is generic, not cohomologous to a constant and $\int f \, dm = 0$. Define $X_n=f \circ \sigma^{n-1}$, $n \geq 1$ with initial distribution $m$, $$B=\lim_{\theta \to \infty} \frac{P(g+\theta f)}{\theta}=\sup_{\mu \in \cM^1_\sigma} \int f \, d\mu,$$
and 
$$I(a)=\sup_{\theta \geq 0} \Big \{ a \theta - P(g+\theta f) \Big \}$$
Then, for all $a \in (0,B)$, there exists a constant $K=K(a)$ such that 
$$\Prob(S_N \geq aN) e^{I(a)N} =  \frac{K}{\sqrt{2\pi N}}\Big(1 + o(1)\Big)\,\,\,\,{\text as}\,\,N \to \infty.$$
\end{thm}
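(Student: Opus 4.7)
The plan is to apply \Cref{FirstTerm} to the family of twisted Ruelle--Perron--Frobenius operators. On the Banach space $F^+_\epsilon$, I would define $\cL_z : F^+_\epsilon \to F^+_\epsilon$, $z \in \complex$, by
\[ (\cL_z u)(\vec{x}) = \sum_{\sigma(\vec{y})=\vec{x}} e^{g(\vec{y}) + zf(\vec{y})} u(\vec{y}). \]
Since $P(g) = 0$, the unperturbed operator $\cL_0 = \cL_g$ admits a strictly positive Lipschitz eigenfunction $h_g$ with eigenvalue $1$ and a dual conformal probability measure $\nu_g$, so $m = h_g \nu_g$ and $\cL_g^*\nu_g = \nu_g$. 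Iterating the algebraic identity $\cL_g(e^{zf}\,\cdot\,) = \cL_z(\cdot)$ and integrating against $\nu_g$ would yield
\[ \EXP_m(e^{zS_N}) = \int \cL_g^N(e^{zS_N} h_g)\, d\nu_g = \int \cL_z^N h_g\, d\nu_g, \]
which is exactly \eqref{MainAssum} with $v = h_g$ and $\ell = \nu_g$.

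Next, I would verify conditions $(B1), (B2), (B3)$ and $[C]$ with $\delta$ taken arbitrarily large. (B1) is immediate since $f$ is bounded on the compact space $\Sigma^+_A$, so $z \mapsto \cL_z$ is entire. (B2) is the standard Ruelle--Perron--Frobenius theorem for aperiodic irreducible SFTs with Lipschitz potential: $\cL_{g+\theta f}$ has a simple isolated top eigenvalue $\lambda(\theta) = e^{P(g+\theta f)}$ and a spectral gap, with $\lambda(0) = 1$. For $[C]$, the identity $(\log\lambda)''(\theta) = \sigma_\theta^2$, where $\sigma_\theta^2$ is the asymptotic variance of $f$ under the equilibrium state of $g+\theta f$, is strictly positive because $f$ is not cohomologous to a constant; and writing the spectral projection as $\Pi_\theta = h_{g+\theta f} \otimes \nu_{g+\theta f}$ (with $\nu_{g+\theta f}(h_{g+\theta f}) = 1$), positivity of all four factors gives $\ell(\Pi_\theta v) = \nu_g(h_{g+\theta f})\cdot \nu_{g+\theta f}(h_g) > 0$.

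The main obstacle is $(B3)$: for each $\theta$ and each real $s \neq 0$ one must show $\mathrm{sp}(\cL_{g + (\theta + is)f}) \subset \{|z| < \lambda(\theta)\}$. By quasi-compactness of the twisted transfer operator on $F^+_\epsilon$, only peripheral eigenvalues need to be excluded. The classical argument applies: a peripheral eigenfunction satisfying $\cL_{g+(\theta+is)f}F = \lambda(\theta) e^{i\alpha}F$ can, via the cone/contraction arguments used in \cite[Sec.~3.6]{FL} and \cite{PP}, be reformulated as a functional equation of the form $F \circ \sigma = e^{i\alpha - isf}F$ with $|F|$ constant. The genericity hypothesis on $f$ then forces $\alpha = 0$, $s = 0$, and $F$ constant, yielding a contradiction. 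Since genericity is a property of $f$ alone, this reasoning is uniform in $\theta$.

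Finally, since $\cL_z$ is entire, $\delta$ can be taken arbitrarily large, so the LDP range in \Cref{FirstTerm} becomes
\[ \bigcup_{\delta > 0}\Big(0, \tfrac{\log\lambda(\delta)}{\delta}\Big) = \Big(0, \lim_{\theta \to \infty} \tfrac{P(g+\theta f)}{\theta}\Big). \]
The variational principle $P(g+\theta f) = \sup_\mu\{h_\mu(\sigma) + \int g\,d\mu + \theta \int f\,d\mu\}$ combined with the uniform bound $|h_\mu(\sigma) + \int g\,d\mu| \le \log k + \|g\|_\infty$ shows $\lim_{\theta\to\infty} P(g+\theta f)/\theta = \sup_\mu \int f\,d\mu = B$. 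The rate function $I(a)$ matches the one in the statement, and \Cref{FirstTerm} delivers
\[ \Prob(S_N \ge aN)e^{I(a)N} = \frac{\nu_g(\Pi_{\theta_a}h_g)\sqrt{I''(a)}}{\theta_a \sqrt{2\pi N}}\big(1 + o(1)\big), \]
so that $K(a) := \nu_g(\Pi_{\theta_a}h_g)\sqrt{I''(a)}/\theta_a$ is the desired constant.
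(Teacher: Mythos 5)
Your proposal is essentially the paper's proof: the same family of twisted Ruelle operators $\cL_{g+zf}$, the same invocation of the Ruelle--Perron--Frobenius theorem for (B1)--(B2), the same appeal to genericity (via Parry--Pollicott) to kill peripheral eigenvalues for (B3), the same spectral-projection and pressure-derivative calculation for $[C]$, and the same application of Theorem~\ref{FirstTerm}. The only notable difference is cosmetic but actually in your favor: you write $\EXP_m(e^{zS_N}) = \nu_g(\cL_z^N h_g)$ (i.e.\ $\ell = \nu_g$, $v = h_g$), which is the correct identity when $h_g$ is not assumed constant, whereas the paper writes $\EXP_m(e^{zS_N}) = m(\cL_z^N \mathbf{1})$, which implicitly assumes the stronger normalization $\cL_g\mathbf{1}=\mathbf{1}$ (i.e.\ $h_g\equiv\mathbf{1}$); either choice furnishes a valid instance of~\eqref{MainAssum}, so both proofs go through, but yours is the more careful rendering of the change of measure. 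Your variational-principle justification of $B=\sup_\mu\int f\,d\mu$ is also a small clean addition that the paper leaves implicit.
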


\begin{proof} 
We introduce the family of Ruelle operators $L_{g+zf}:F^+_\epsilon \to F^+_\epsilon $, $z \in \complex$, 
$$\cL_{g+zf}(w)(\x)=\sum_{\sigma(\y)=\x} e^{g(\y)+zf(\y)}w(\y).$$
We establish the conditions (B1), (B2), (B3) and $[C]$ for this family of operators. Then, the result follows from \Cref{FirstTerm}.

It is straightforward that these are bounded linear operators, and  that $z \mapsto \cL_{g+zf}$ is analytic. Also,
$$\EXP_m\big(e^{zS_n}\big)=\int e^{zS_n}\, dm= \int \cL^n_{g+zf}{\bf 1}\, dm = m \big(\cL^n_{g+zf}{\bf 1}\big),\ z\in \complex .$$
From Ruelle-Perron-Forbenius Theorem (\cite[Theorem 2.2]{PP}), for all $\theta \in \reals$, $\cL_{g+\theta f}$ has a simple maximal positive eigenvalue $\lambda(\theta)$ given by $\lambda(\theta)=e^{P(g+\theta f)}$ with a positive eigenfunction $h_\theta$ and the rest of its spectrum is contained strictly inside $\{|z|<\lambda(\theta)\}$. Also, $\lambda(0)=e^{P(g)}=1$ by the choice of normalized potential $g$. This is (B2).

Since $f$ is generic, for all $t \neq 0$, $\cL_{g+(\theta+it)f}$ does not have eigenvalues on $\{z \in \complex\, |\, |z|=e^{P(g+\theta f)}\} $. This follows from the remarks appearing before the Theorem 4.13 in \cite{PP}. From \cite[Theorem 4.5]{PP} if $\cL_{g+(\theta+it)f}$ does not have an eigenvalue of modulus $e^{P(f+\theta g)}$ then its spectral radius is strictly smaller than $e^{P(g+\theta f)}$. This establishes (B3).

Again, from the Ruelle--Perron--Forbenius Theorem, the projection of to the top eigenspace $\Pi_\theta$ takes the form $h_\theta \otimes \mu_\theta$ where $h_\theta\, d\mu_\theta$ is the equilibrium state of $g+\theta f$. %, and $\int h_\theta \, d\mu_\theta = 1$. 
Hence, $m(\Pi_\theta {\bf 1})=\int h_\theta \, dm >0$. Also, from \cite[Proposition 4.12]{PP}, $P''(\theta)>0$ if and only if $f$ is not cohomologous to a constant. Therefore, we have $[C]$. 
\end{proof}

\subsection{Smooth Expanding Maps}\label{ExpandingMaps}
Uniformly expanding maps are the most basic type of uniformly hyperbolic systems, and as a result they have been studied extensively. Most of their statistical properties are well-known. See, for example, \cite{G} and references therein. Here we establish an exact Large Deviation Principle for $C^1-$observables in the setting of $C^2-$expanding maps of the torus. 

Suppose $f$ is smooth and uniformly expanding on $\mathbb{T}$, i.e., $f \in C^{r}(\mathbb{T},\mathbb{T})$, $r\geq 2$ and there is $\lambda_*$ such that $\inf_{x \in \mathbb{T}}|f'(x)|\geq \lambda_*>1$.  Let $g\in C^{r-1}(\mathbb{T},\reals)$ be such that there is constant $c$ and $\phi \in C^0(\mathbb{T}, \reals)$ such that
\begin{equation}\label{CtsCoB}
g=c+\phi - \phi \circ f. 
\end{equation}
That is, $g$ is not a continuous coboundary. Take $X_n= g\circ f^{n-1}$, $n \geq 1$. If we choose an initial point $x$ according to a probability density $\rho(x)$ then $\{X_n\}$ becomes a sequence of random variables. WLOG assume $\int_{\Tor} g(x)\rho(x)\, dx = 0$. 

Then, the following theorem establishes a strong large deviation result for $X_n$. 

\begin{thm}
 Suppose $f \in C^{r}(\mathbb{T},\mathbb{T})$, $r \geq 2$ and uniformly expanding on $\mathbb{T}$. Let $g \in C^{r-1}(\mathbb{T},\reals)$ be such that \eqref{CtsCoB} does not hold. Take $X_n=g \circ f^n$ with initial distribution $\mu$. Define $\cM^1_f(\mathbb{T})=\{\nu \in \cM^1(\mathbb{T})| f_*\nu=\nu\}$, $B=\sup_{\nu \in \cM^1_f(\mathbb{T})} \int g\, d\nu,$
and
\begin{equation}\label{ExpandRate}
I(a)=-\sup_{\nu \in \cM(a)} \bigg[{h}_{KS}(\nu)-\int (\log f')\, d\nu \bigg]
\end{equation}
where $\cM(a)=\{\nu \in \cM^1(\mathbb{T})\ |\ f_*\nu=\nu, \int g\ d\nu = a \}$ and $h_{KS}$ is the Kolmogorov-Sinai entropy. Then, for all $a \in (0,B)$ there exists a constant $K=K(a)$ such that
\begin{equation}\label{Expanding}
\Prob(S_N \geq aN) e^{I(a)N} =  \frac{K}{\sqrt{2\pi N}}\Big(1 + o(1)\Big)\,\,\,\,{\text as}\,\,N \to \infty.
\end{equation}

%is the collection of all $f-$invariant probability measures $\nu$ such that $\int g\, d\nu = a$.   
\end{thm}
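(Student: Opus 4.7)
The plan is to apply \Cref{FirstTerm} to the family of twisted Ruelle--Perron--Frobenius transfer operators defined by
$$\cL_z v(x) = \sum_{f(y) = x} \frac{e^{z g(y)}}{|f'(y)|}\, v(y),$$
acting on a Banach space $\Ban$ on which $\cL_0$ has a spectral gap (for instance, $\BV(\Tor)$ or $C^{r-1}(\Tor,\complex)$). With $v = \rho$ (the initial density) and $\ell(u) = \int_\Tor u\, dx$, the Koopman--transfer duality $\int (\phi\circ f^N) e^{zS_N} \psi\, dx = \int \phi \cdot \cL_z^N \psi\, dx$, applied to $\phi \equiv 1$ and $\psi = \rho$, yields
$$\EXP_\mu(e^{z S_N}) = \int_\Tor e^{z S_N(x)} \rho(x)\, dx = \int_\Tor \cL_z^N \rho(x)\, dx = \ell(\cL_z^N v),$$
verifying \eqref{MainAssum}.

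It then remains to check conditions $(B1), (B2), (B3), [C]$ and to identify the rate function with \eqref{ExpandRate}. Analyticity $(B1)$ follows from the analytic dependence $z \mapsto e^{zg}$. The spectral gap $(B2)$ is the classical Ruelle--Perron--Frobenius theorem applied to the potential $-\log|f'| + \theta g$: there is a simple leading eigenvalue $\lambda(\theta) = e^{P(-\log|f'| + \theta g)}$ (where $P$ denotes topological pressure), with $\lambda(0) = 1$ because the equilibrium state of $-\log|f'|$ is the absolutely continuous invariant probability on $\Tor$. For $(B3)$, the assumption that \eqref{CtsCoB} fails is the key ingredient: a standard aperiodicity argument shows that if $\mathrm{sp}(\cL_{\theta + is})$ met the circle of radius $\lambda(\theta)$ for some $s \neq 0$, then $sg$ would be cohomologous modulo $2\pi\integers$ to a continuous function; continuity combined with the discreteness of $2\pi\integers$ forces the modular equivalence to be exact, yielding $g = c + \phi - \phi \circ f$ for some constant $c$ and continuous $\phi$, contradicting the hypothesis. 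Condition $[C]$ then follows as in the SFT and Markov chain cases: $(\log\lambda)''(\theta) = \sigma^2_\theta > 0$ is the non-degeneracy of the asymptotic variance of the Birkhoff sums under the equilibrium state of $-\log|f'| + \theta g$ (equivalent to $g$ not being cohomologous to a constant), while $\Pi_\theta = h_\theta \otimes \mu_\theta$ with $h_\theta > 0$ the leading eigenfunction and $\mu_\theta$ a positive Borel measure, so that $\ell(\Pi_\theta v) = \mu_\theta(\rho)\int_\Tor h_\theta\, dx > 0$.

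The identification of the rate function proceeds by Legendre duality: $I(a) = \sup_\theta [a\theta - \log \lambda(\theta)] = \sup_\theta [a\theta - P(-\log|f'| + \theta g)]$. The variational principle
$$P(-\log|f'| + \theta g) = \sup_{\nu \in \cM^1_f(\Tor)}\Big[h_{KS}(\nu) - \int \log|f'|\, d\nu + \theta \int g\, d\nu \Big]$$
and the standard minimax interchange deliver \eqref{ExpandRate}. Moreover $B = \lim_{\theta\to\infty} \log\lambda(\theta)/\theta = \sup_{\nu \in \cM^1_f(\Tor)} \int g\, d\nu$ by convexity of $\log \lambda$, so the range of admissible $a$ matches $(0,B)$. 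An application of \Cref{FirstTerm} then yields \eqref{Expanding} with $K = \ell(\Pi_{\theta_a} v)\sqrt{I''(a)}/\theta_a$.

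The main technical obstacle is $(B3)$: while a clean statement, its derivation requires the transition from the real-analytic non-coboundary hypothesis \eqref{CtsCoB} to a uniform spectral radius bound on $\cL_{\theta + is}$ away from $s = 0$. This is classical machinery in the smooth expanding setting (see \cite{G}), but is the only step which cannot be bypassed by pure soft arguments.
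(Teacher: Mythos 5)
Your proposal follows the paper's own argument essentially step for step: twist the Ruelle--Perron--Frobenius transfer operator, verify (B1)--(B3) and $[C]$, and invoke \Cref{FirstTerm}, with the rate function identified by Legendre duality and the thermodynamic variational principle. One caveat on (B3): you frame the argument as ``if $\mathrm{sp}(\cL_{\theta+is})$ met the circle of radius $\lambda(\theta)$,'' but that implicitly presupposes the spectrum is already confined to the closed disk of radius $\lambda(\theta)$ and that the spectral part on the boundary consists of eigenvalues; the paper first establishes a Lasota--Yorke inequality for $\cL_{\theta+is}$ on $C^1$ together with the $C^0$ domination $\|\cL_{\theta+is}^n u\|_\infty \le \|\cL_\theta^n|u|\|_\infty$ to get, via \cite[Lemma 2.2]{G}, essential spectral radius $\le \lambda_*^{-1}\lambda(\theta)$ and spectral radius $\le \lambda(\theta)$, and only then runs the aperiodicity argument you sketch on the constant-modulus eigenfunction. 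You do flag this as the main technical obstacle and point to the right source, so it is an incomplete sketch rather than a wrong turn.
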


\begin{proof}
Take $\cL$ to be the transfer operator associated with $f$, $$\cL(h)(x)=\sum_{f(y)=x} \frac{h(y)}{f'(y)}.$$ For $z \in \complex$, define $\cL_{z}: C^1 \to C^1$ by $\cL_z(\cdot)=\cL(e^{zg}\ \cdot\ )$. That is,
$$\cL_z(h)(x)=\sum_{f(y)=x} e^{zg(y)}\frac{h(y)}{f'(y)}.$$
Then, it follows from properties of the transfer operator that
$$\EXP(e^{zS_n})=\int (\cL^n_{z}\rho)(x)\, dx.$$
Also, $z \mapsto \cL_z$ is analytic due to the power series expansion, $\cL_z(\cdot)=\sum_{k=0}^\infty \frac{z^k}{k!} \cL(g^k \ \cdot\ )$. Note here that, $\cL(g^k\ \cdot\ ): C^1 \to C^1$ because $\|g\|_\infty<\infty$ and $\|g'\|_\infty < \infty$. 

From \cite[Lemma A.1]{DL}, we have that for $\theta \in \reals$, $\cL_{\theta}$ is of Perron-Forbenius type for all $\theta$ and the projection operator to the top-eigenspace $\Pi_{\theta}$ takes the form $h_\theta \otimes m_{\theta}$ where $h_\theta \in C^1$ is positive and $m_\theta$ is a positive measure. That is for all $\theta$, $\cL_\theta=\lambda(\theta)h_\theta \otimes m_{\theta} + \Lambda_\theta$ with $\|\Lambda_\theta\| < Cr^n_\theta$ where $0<r_\theta<\lambda(\theta)$. %It is also, well-known that sp$(\cL)=\{|z|\leq \epsilon <1\} \cup \{1\}$ and sp$(\cL_{is}) \subset \{|z|<1\}$.  

We need to verify that  $(\log \lambda)''(\theta)>0$ and sp$(\cL_{\theta+is}) \subset \{|z|<\lambda(\theta)\}$ for $s \neq 0$. 

To see the former we note that $$(\log \lambda)''(\theta)=\lim_{n \to \infty} \frac{1}{n}m_{\theta}\bigg(\bigg[\sum_{k=0}^{n-1}g \circ f^k\bigg]^2 h_\theta\bigg) \geq 0,$$ and if equality holds then $g$ is a continuous coboundary (see \cite[A.12b and Lemma A.16]{DL}). Therefore, in our setting, $(\log \lambda)''(\theta)>0$ for all $\theta$. 

For the latter, we first show that sp$(\cL_{\theta+is}) \subseteq \{z \in \complex\, |\, |z| \leq \lambda(\theta)\}$, essential spectral radius of $\cL_{\theta+is}$ is at most $\lambda^{-1}_* \lambda(\theta)$, and there are no eigenvalues on $\{z \in \complex\ |\ |z|=\lambda(\theta)\}$. 
%Next, we show that the essential spectral radius of $\cL_{\theta+is}$ is at most $\lambda^{-1}_*\lambda(\theta)$. To see this first note that 
Observe that
\begin{align}\label{nPower}
\cL^{n}_{\theta+is}u(x)=\sum_{f^n(y)=x}\frac{e^{(\theta+is)g_n(y)}}{ (f^n)'(y)} u(y)
\end{align}
where $g_n=\sum_{k=0}^{n-1} g \circ f^k$.
From this it follows that
\begin{align}\label{nPowerDeri}
\frac{d}{dx}\cL^{n}_{\theta+is}u = \cL^{n}_{\theta+is} \bigg(\frac{u'}{(f^n)'}+(\theta+is)\frac{g'_n}{(f^n)'}u-\frac{(f^n)''}{[(f^n)']^2}u \bigg).
\end{align}
We note that from \cite[Remark A.3]{DL} the spectral radii of $\cL_\theta : C^1 \to C^1$ and $\cL_\theta : C^0 \to C^0$ coincide. Now, from \eqref{nPower}, 
\begin{align}\label{nPower}
\|\cL^{n}_{\theta+is}u\|_\infty \leq \|\cL^n_\theta\|_{C^0}\|u\|_\infty \leq C \lambda(\theta)^n\|u\|_\infty,
\end{align}
and from \eqref{nPowerDeri},
\begin{align*}
\Big\|\frac{d}{dx}\cL^{n}_{\theta+is}u \Big\|_\infty = \|\cL^{n}_{\theta}\|_{C^0}\bigg(\lambda^{-n}_*\|u'\|_\infty+\Big[\sqrt{\theta^2+s^2}\lambda^{-n}_*\|g'_n\|_\infty + \lambda^{-2n}_*\|(f^n)''\|_\infty\Big]\|u\|_\infty \bigg).
\end{align*}
Thus, we obtain,
\begin{align*}
\|\cL^{n}_{\theta+is}u\|_{C^1} \leq  C\lambda(\theta)^n \Big( \lambda^{-n}_*\|u\|_{C^1} + \ol{C} \|u\|_\infty \Big)
\end{align*}
where $\ol{C}$ depends only on $s$ and $\theta$.
Since the unit ball in $C^1$ is relatively compact in $C^0$, we can use \cite[Lemma 2.2]{G} to conclude that the essential spectral radius of $\cL_{\theta+is}$ is at most $\lambda^{-1}_*\lambda(\theta)$ and the spectral radius of $\cL_{\theta+is}$ is at most $\lambda(\theta)$. 

Next, we normalize the family of operators $\cQ_{\theta+is}$,
$$\overline{\cQ}_{\theta+is} v(x) = \sum_{f(y)=x}\frac{e^{(\theta+is)g(y)}h_\theta(y)}{f'(y)h_\theta\circ f (y)} v(y)$$
Then, $\overline{\cQ}_{\theta+is}=H_\theta^{-1} \circ \cL_t \circ H_\theta $ where $H_\theta$ is multiplication by the function $h_\theta$. Note that $H_\theta$ is invertible because $h_\theta >0$. Now, $\cQ_{\theta+is}$ and $\overline{\cQ}_{\theta+is}$ have the same spectrum. However, the eigenfunction corresponding to the eigenvalue $1$ of $\overline{\cQ}_\theta$ changes to the constant function ${\bf 1}$. 
%(In order to apply Theorem \ref{EdgeExp} 

Assume $e^{i\theta}$ is an eigenvalue of $\overline{\cQ}_{\theta+is}$ for $s \neq 0$. Then, there exists $u \in C^1$ with $\overline{\cQ}_{\theta+is}u(x)=e^{i\theta}u(x)$. Observe that,
%\begin{multline*}
%\cL_0|u|(x)=\sum_{f(y)=x}\frac{|u(y)|}{(f'(y)}\geq \left|\sum_{f(y)=x}\frac{e^{itg(y)}u(y)}{f'(y)} \right|\\ =|\cL_tu(x)|=|e^{i\theta}u(x)|=|u(x)|
%\end{multline*}
\begin{align*}
\overline{\cQ}_\theta|u|(x)=\sum_{f(y)=x}\frac{e^{\theta g(y)}|u(y)|h(y)}{f'(y)h\circ f (y)}\geq \bigg|\sum_{f(y)=x}\frac{e^{(\theta+is)g(y)}u(y)h(y)}{f'(y)h\circ f (y)} \bigg|=|\overline{\cQ}_{\theta+is}u(x)| = |u(x)|
\end{align*}
Also note that, $\overline{\cQ}_\theta$ is a positive operator. Hence, $\overline{\cQ}^n_\theta|u|(x) \geq |u(x)|$ for all $n$. However, $$\lim_{n \to \infty} (\overline{\cQ}^n_\theta|u|)(x) = \int |u(y)| \cdot {\bf 1} \, dm_\theta(y)$$ 
because ${\bf 1}$ is the eigenfunction corresponding to the top eigenvalue. So for all $x$, $$\int |u(y)|\, d m_\theta(y)\geq |u(x)|$$ This implies that $|u(x)|$ is constant. WLOG $|u(x)|\equiv 1$. So we can write $u(x)=e^{i\gamma(x)}$ for $\gamma \in C^1$. Then, 
$$ \overline{\cQ}_{\theta+is} u(x) = \sum_{f(y)=x}\frac{e^{\theta g(y)}h(y)}{f'(y)h\circ f (y)} e^{i(sg(y)+\gamma(y))} = e^{i (\theta+\gamma(x))}. $$
Therefore,
$$\sum_{f(y)=x}\frac{e^{\theta g(y)} h(y)}{f'(y)h\circ f (y)} e^{i(sg(y)+\gamma(y)-\gamma(f(y))-\theta)} = 1 $$
for all $x$. Since, $$ \overline{\cQ}_\theta {\bf 1} = \sum_{f(y)=x}\frac{e^{\theta g(y)}h(y)}{f'(y)h\circ f (y)}  = {\bf 1}$$ 
and $e^{i(sg(y)+\gamma(y)-\gamma(x)-\theta)}$ are unit vectors, it follows that 
\begin{equation*}
sg(y)+\gamma(y)-\gamma(f(y))-\theta = 0\mod 2\pi
\end{equation*}
for all $y$. Because LHS is continuous,
\begin{equation*}
sg(y)+\gamma(y)-\gamma(f(y))-\theta = c
\end{equation*}
Because $g$ is not a continuous coboundary we have a contradiction. Therefore, $\overline{\cQ}_{\theta+is}$ does not have an eigenvalue on the unit circle when $s\neq 0$. So $\cL_{\theta+is}$ does not have eigenvalues on $\{z \in \complex\, |\, |z|=\lambda(\theta)\}$ when $s \neq 0$. 

Now, due to \Cref{FirstTerm} the strong large deviation result \eqref{Expanding} holds with $$I(a)=\sup_{\theta \in \reals}\ [a\theta -\log \lambda(\theta)]=a\theta_a - \log \lambda(\theta_a),$$ and $$K=K(a)=\frac{\sqrt{I''(a)}}{\theta_a}m_{\theta_a}(\mathbb{T})\int h_{\theta_a}(x)\rho(x)\, dx.$$ The entropy formulation of $I(a)$, \eqref{ExpandRate}, can be found in \cite[Lemma 6.6]{DL}.
\end{proof}

\begin{appendix}
\section{Construction of $\{f_k\}$.}\label{fk}
For each $k$, let $f_k(x)=\frac{1}{\pi}\tan^{-1}(kx)+\frac{1}{2}$ for $x\in[-1,k]$. Extend $f_k$ to $[-2,k+1]$ in such a way that $f_k(-2)=f_k(k+1)=0$, $f_k$ is continuously differentiable and satisfying the following conditions. 
\begin{enumerate}\setlength\itemsep{4pt}
\item $f_k$ is increasing on $[-2,k]$ with derivative on $[-2,-1]$ is bounded above by $1$. 
\item $f_k$ is decreasing on $[k+1/2,k+1]$ with derivative bounded below by $-5$. 
\item $|f'_k| \leq 5$ on $[k,k+1]$.
\item $0 \leq f_k \leq 1$ on $[-2,k+1]$ and $f_k=0$ elsewhere. 
\end{enumerate}
Then, $f_k$ is supported on $[-2,k+1]$. Here our choice of bounds $1$ and $-5$ in some sense arbitrary. As long as they are large enough and independent of $k$, we obtain an appropriate sequence of functions.

As an example, when $k=5$, the graph of $f_5$ looks like:

%\begin{center}
%\begin{tikzpicture}
%\draw [help lines, <->] (-2,0) -- (7ok,0);
%\draw [help lines, <->] (0,0) -- (0,1.1);
%\draw [green,domain=0:2*pi] plot (\x, {(sin(\x r)* ln(\x+1))/2});
%\end{tikzpicture}
%\end{center}
\begin{center}
\includegraphics[scale=0.6]{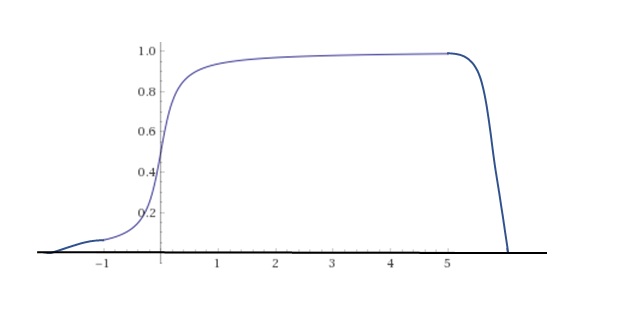}
\end{center}\vspace{-15pt}

Since $0 \leq f_k \leq 1$, for all $\gamma>0$,
\begin{align*}
\int |(f_k)_{\gamma}(x)|\, dx =\int |e^{-\gamma x}f_k(x)|\, dx \leq \int_{-2}^{\infty} e^{-\gamma x}\, dx = C_{\gamma,1} < \infty.
\end{align*}

Since $|f'_k|\leq 5$ on $[k,k+1]$, $0 \leq f_k \leq 1$ and $f_k$ is increasing on $[-2,k]$
\begin{align*}
\int |((f_k)_{\gamma})'(x)| dx &=\int_{-2}^{k+1} |\gamma e^{-\gamma x}f_k(x)+e^{-\gamma x}f'_k(x) |\ dx \\ &\leq \int_{-2}^{k+1} \Big(\gamma e^{-\gamma x}f_k(x)+e^{-\gamma x}|f'_k(x)|\Big)\, dx \\ &\leq \int_{-2}^{k} \gamma e^{-\gamma x}\, dx+\int_{-1}^{k} f'_k(x)\, dx+ \int_{k}^{k+1} ( \gamma e^{-\gamma x}+ 5e^{-\gamma x})\, dx \\ 
&\leq 1+\int_{-2}^{k+1}(5+ \gamma)e^{-\gamma x}\, dx = C_{\gamma,2} < \infty.
\end{align*}

Also, note that $|x^lf_k(x)| \leq x^le^{-\gamma x} $ for all $x \in [-2,k+1]$. Hence, 
\begin{align*}
\int |x^lf_k(x)|\, dx &\leq \int_{-2}^\infty x^le^{-\gamma x} \, dx = J_{\gamma,l} < \infty.
\end{align*}
Put $J_{r}(\gamma)=\max_{1 \leq l \leq r} J_{\gamma,l}$ and $C_{\gamma}(r) = \max \{J_r(\gamma), C_{\gamma,1}, C_{\gamma, 2}\}$. Then, $C_{\gamma}(r)$ is finite and depends only on $\gamma$ and $r$. 

Now, we have the following:%\vspace{-5pt}
\begin{enumerate}\setlength\itemsep{4pt}
\item $C^1_{r+1}((f_k)_{\gamma}) \leq C_{\gamma}(r)$ for all $k$. 
\item Since $\frac{1}{\pi}\tan^{-1}(kx)+\frac{1}{2}$ converges pointwise to $1_{[0,\infty)}(x)$, %it is easy to see that 
$f_k \to 1_{[0,\infty)}$ pointwise.
\item  Since for all $m$, $e^{-\gamma z} P^a_{m}(z)f_k(z) \to e^{-\gamma z} P^a_{m}(z)1_{[0,\infty)}(z)$ pointwise as $k \to \infty$, $$|e^{-\gamma z} P^a_{m}(z)f_k(z)| \leq e^{-\gamma z} |P^a_{m}(z)| 1_{[-2,\infty)}$$ for all $k$, and $e^{-\gamma z} |P^a_{m}(z)|1_{[-2,\infty)}$ is integrable, %we can apply the LDCT to conclude 
applying the LDCT,
$$\int P_{p}(z)
(f_k)_\gamma\left(z\right) dz = \int_{-2}^\infty e^{-\gamma z} P_{p}(z)f_k(z) \, dz \to \int_{0}^\infty e^{-\gamma z} P_{p}(z) \, dz.$$
\end{enumerate}

\subsection*{Acknowledgement} The authors would like to thank Dmitry Dolgopyat and Leonid Koralov for useful discussions and suggestions during the project and carefully reading the manuscript. While working on this article, P. Hebbar was partially supported by the ARO grant W911NF1710419.

\end{appendix}

\end{document}